\documentclass{article}
\usepackage[english]{babel}
\usepackage[letterpaper,top=2cm,bottom=2cm,left=3cm,right=3cm,marginparwidth=1.75cm]{geometry}

\usepackage{amsmath}
\usepackage{amsfonts}
\usepackage{amssymb}
\usepackage{amsthm}
\usepackage{mathtools}
\usepackage{graphicx}
\usepackage[colorlinks=true, allcolors=blue]{hyperref}
\usepackage{dsfont}
\usepackage{tikz}
\usetikzlibrary{calc,arrows.meta,positioning}
\usepackage{pgfplots}
\pgfplotsset{compat=1.18}

\usepackage{newtxtext}
\usepackage{newtxmath}

\def\N{\mathbb{N}}
\def\R{\mathbb{R}}

\def\O{{\Omega}}

\newcommand{\B}{\mathbf{B}}
\newcommand{\E}{\mathbf{E}}
\newcommand{\F}{\mathbf{F}}
\newcommand{\U}{\mathbf{U}}

\newcommand{\W}{\mathbf{W}}

\newcommand{\n}{\mathbf{n}}

\newcommand{\curl}{\operatorname{curl}}
\renewcommand{\div}{\operatorname{div}}

\newcommand{\Per}{\operatorname{Per}}

\numberwithin{equation}{section}

\newtheorem{theorem}{Theorem}[section]
\newtheorem{proposition}[theorem]{Proposition}
\newtheorem{corollary}[theorem]{Corollary}
\newtheorem{lemma}[theorem]{Lemma}
\newtheorem{conjecture}[theorem]{Conjecture}

\theoremstyle{definition}

\newtheorem{remark}[theorem]{Remark}

\title{Shape optimisation of the first solenoidal Maxwell eigenvalue}
\author{A. Henrot\footnote{Universit\'e de Lorraine, CNRS, IECL, F-54000 Nancy, France.
(\texttt{antoine.henrot@univ-lorraine.fr}).}\and 
I. Mazari-Fouquer\footnote{CEREMADE, UMR CNRS 7534, Universit\'e Paris-Dauphine, Universit\'e PSL, Place du Mar\'echal De Lattre De Tassigny, 75775 Paris cedex 16, France (\texttt{mazari@ceremade.dauphine.fr}).}
\and  
Y. Privat\footnote{Universit\'e de Lorraine, CNRS, Inria SPHINX, IECL, F-54000 Nancy, France ({\tt yannick.privat@univ-lorraine.fr})}~\footnote{Institut Universitaire de France (IUF)} 
\and 
D. Stantejsky\footnote{Universit\'e de Lorraine, CNRS, IECL, F-54000 Nancy, France.
(\texttt{dominik.stantejsky@univ-lorraine.fr}).}}

\date{September 18, 2026}

\begin{document}

\maketitle

\begin{abstract}
We study the minimisation of the first solenoidal Maxwell eigenvalue $\lambda_1^{\mathrm{Max}}(\Omega)$ in perfectly conducting cavities through the scale-invariant functional $J_{\mathrm{Per}}(\Omega):=\mathrm{Per}(\Omega)\lambda_1^{\mathrm{Max}}(\Omega)$ in $\mathbb R^3$. While the corresponding unconstrained problem is known to be ill-posed, we focus on bounded convex cavities and prove that $J_{\mathrm{Per}}$ has a positive infimum in this class.
 The proof relies on the study of the three main possible behaviours for sequences of convex domains with bounded perimeter: convergence to a bounded convex domain with non-empty interior, collapse to a planar domain or collapse to a one-dimensional domain.  The case of a planar collapse is handled through projection arguments, yielding a lower bound, while that of a one-dimensional collapse is controlled by a different argument, showing that in that case $J_{\mathrm{Per}}$ must diverge to $+\infty$. Although the question of existence of an optimal shape (that is, ruling out planar collapse) remains open, we derive several negative results and obstructions to existence; typically, no minimiser with a $\mathscr C^{3,+}$ boundary exists. This relies on a fine analysis of first-order optimality conditions.
\end{abstract}

\tableofcontents

\section{Introduction}
\subsection{Scope of the paper}
The goal of this paper is to investigate shape optimisation problems related to Maxwell eigenvalue problems, a natural problem from the applied perspective \cite{zbMATH06152781,akcelik2005adjoint,lin2016cavity} and the mathematical point of view. Mathematically, this should be seen as a contribution to shape optimisation (and more generally calculus of variations) for physical systems, which have seen a growing activity over the past decade--one might mention systems coming from fluid mechanics (see \cite{zbMATH07927413} for the Stokes operator), elasticity (see \cite{arXiv:2412.06437} for the Lam\'e operator) and, most relevant to us here, from tomography \cite{enciso2022optimal,zbMATH07697301}.   Although we defer a detailed discussion of the literature to Section \ref{Se:Biblio}, let us observe that, while each system has its specificities, the main difficulties usually stem from the lack of a comparison principle and the constraints imposed by the underlying physical phenomena \emph{e.g.} incompressibility of the vector fields. Let us also mention the growing efforts made to develop a robust set of tools for the sensitivity analysis of such problems \cite{LambertiZaccaron2021,LambertiZaccaron2023Stability,LambertiPaulyZaccaron2025b} with a particular emphasis on numerical aspects \cite{herter2024numerical,lalaukeraly2013adjoint}. Finally, let us note that we are not the first to consider the optimisation of Maxwell eigenvalues from the shape optimisation point of view; as far as we are aware, this study was initiated in \cite{krejvcivrik2025note,FerraressoProvenzano2025}, and the main results of \cite{krejvcivrik2025note} serve as the starting point to our analysis:  indeed, in  \cite{krejvcivrik2025note}, it is proved that the volume constrained and perimeter constrained problems are ill-posed. Let us also mention that in the last stages of writing this paper, the authors were made aware of the very recent \cite{arXiv:2607.26983}, which we discuss in Section \ref{Se:Biblio}, and which deals with questions related to the ones under consideration here. In particular, \cite{arXiv:2607.26983} proves that there are no smooth, simply connected local minimisers of the lowest Maxwell eigenvalue. Here, we consider additional geometric restrictions and focus on  \emph{perimeter constrained minimisation within the class of convex sets} (it should be noted that volume constrained minimisation is also ill-posed in the class of convex sets). Our main results can be briefly described as follows: first, we prove a uniform lower bound. This is highly non-trivial, as the existence of an optimal shape is still open, and relies on a study of minimising sequences. More specifically, the convexity assumption ensures that any minimising sequence has one of the following behaviours: it can converge to a three-dimensional object, it can collapse to a  two-dimensional set or to a one-dimensional line. We handle each of these cases separately, relying on tools coming from dimension reduction. However, as is classical in shape optimisation  \cite{enciso2022optimal,zbMATH07697301}, the existence of a lower bound does not imply the existence of an optimal shape. In order to further our understanding of the problem, we first propose a dimensionally reduced problem in order to analyse thin domains collapsing to a planar set. 
Second, we discuss the properties of possible three-dimensional optimal shapes and exclude minimisers with a $\mathscr C^{3,+}$ boundary, using first-order optimality conditions in the spirit of \cite{enciso2022optimal,zbMATH07697301,zbMATH07927413}.
\subsection{Notations}\label{Se:Notations}
Throughout, we work in dimension $d=3$. \begin{itemize}
  \item If $\Omega\subset\mathbb{R}^3$ is an open set, $|\Omega|$ denotes its Lebesgue measure.
  \item $\operatorname{Per}(\Omega)$ denotes the De Giorgi perimeter of $\Omega$.
  \item $\operatorname{diam}(\Omega)$ denotes the diameter of $\Omega$, i.e.
  $\operatorname{diam}(\Omega)=\sup\{\lvert x-y\rvert : x,y\in\Omega\}$.
  \item For $(a,b)\in\mathbb{R}^2$ we write
  $$
    \begin{pmatrix} a \\ b \end{pmatrix}^{ \perp} := \begin{pmatrix} -b \\ a \end{pmatrix}.
  $$
  \item If $\Omega$ has a Lipschitz boundary, $\mathbf{n}$ denotes the unit outer normal vector to $\partial \Omega$.
  \item If $\mathbf{u}$ is a smooth vector field, its tangential trace on $\partial\Omega$ is denoted by $\mathbf{u}\times\mathbf{n}$ and its normal trace on $\partial\Omega$ is denoted by $\mathbf{u}\cdot\mathbf{n}$.
\end{itemize}
Regarding function spaces, we use the following standard notations:
\begin{itemize}
\item  We let $\mathbf{L}^2(\Omega):=(L^2(\Omega))^3$. 
\item We define  $\mathbf{H}(\operatorname{curl};\Omega)$ and $\mathbf{H}(\operatorname{div};\Omega)$  as
$$
\mathbf{H}(\operatorname{curl};\Omega):=\left\{\mathbf u\in\mathbf L^2(\Omega):\operatorname{curl}\mathbf u\in\mathbf L^2(\Omega)\right\},
\qquad
\mathbf{H}(\operatorname{div};\Omega):=\left\{\mathbf u\in\mathbf L^2(\Omega):\operatorname{div}\mathbf u\in L^2(\Omega)\right\}
$$ where $\operatorname{curl}$ and $\operatorname{div}$ are understood in the distributional sense. 
\item The space $\mathbf{H}_0(\mathrm{curl};\Omega)$ is the closure of $C_c^\infty(\Omega)^3$ for the $\mathbf{H}(\mathrm{curl};\Omega)$--norm. If $\partial\Omega$ is Lipschitz, this space can equivalently be characterised as
$$
\mathbf H_0(\operatorname{curl};\Omega)
=\left\{
\mathbf u\in \mathbf H(\operatorname{curl};\Omega)\; ;\;
\mathbf u\times \mathbf n = 0
\text{ on }\partial\Omega
\right\},
$$
where $\mathbf u\times \mathbf n=0$ is understood in the sense of the tangential trace.
\item We set
$$
\mathbf{X}_N(\Omega):=\mathbf{H}_0(\mathrm{curl};\Omega)\cap\mathbf{H}(\mathrm{div};\Omega).
$$
\end{itemize} 

\subsection{The first Maxwell eigenvalue}\label{subsec:first-maxwell}

Let $\Omega\subset\mathbb{R}^3$ be a nonempty bounded convex domain with Lipschitz boundary.
We consider time-harmonic electro-magnetic fields in the homogeneous,  isotropic and perfectly conducting body $\O$ with electric permittivity $\varepsilon>0$ and magnetic permeability $\mu>0$. Up to adimensionalisation,   we assume  $\varepsilon\mu=1$. Letting $\E$ denote the electric field and $\B$ the magnetic field, the boundary conditions associated with the perfect conduction assumption are
$\E\times\mathbf n=0$ and $\B\cdot\mathbf n=0$ on $\partial\Omega$.
This leads to the following eigenvalue problem : find $(\xi, \E,\B)$ with $(\E,\B)\neq(0,0)$ such that \begin{equation}\label{eigen:maxwell}
   \left\{ \begin{array}{ll}
    \curl \E-i\xi \mu \B=0 & \text{in }\Omega,  \\
    \curl \B+i\xi \varepsilon \E=0 & \text{in }\Omega,\\
    \E\times \n=0,\quad \B \cdot \n=0 & \text{on }\partial\Omega.
    \end{array}\right.
\end{equation}
For every nonzero frequency $\xi$, taking the divergence of the two equations in \eqref{eigen:maxwell} gives
$\div\E=\div\B=0$. Moreover, according to \eqref{eigen:maxwell}
\begin{equation}\label{eq:EB-correspondence}
\B=\frac{1}{i\xi\mu}\curl\E, \qquad \E=-\frac{1}{i\xi\varepsilon}\curl\B.
\end{equation}
We next derive the corresponding second-order formulations and then make their equivalence precise.

\paragraph{Electric formulation}
Eliminating $\B$ from \eqref{eigen:maxwell} gives
\begin{equation}\label{eq:Maxwell}
\begin{cases}
\curl\curl \E-\xi^2 \E=0 &\text{in }\Omega,\\
\div \E=0 &\text{in }\Omega,\\
\E\times\mathbf{n}=0 &\text{on }\partial\Omega.
\end{cases}
\end{equation}
For $\B$ given as in \eqref{eq:EB-correspondence}, the boundary condition $\B\cdot\n=0$ is automatically recovered\footnote{If $\E\in\mathbf H_0(\curl;\Omega)$, then $\curl\E\in\mathbf H(\div;\Omega)$ since $\div\curl\E=0$ in the sense of distributions. Its normal trace is characterized by
$$
\left\langle(\curl\E)\cdot\n,\varphi|_{\partial\Omega}\right\rangle
=\int_\Omega\curl\E\cdot\nabla\varphi\,dx
$$
for every $\varphi\in H^1(\Omega)$. The Green formula for $\curl$ gives
$$
\int_\Omega\curl\E\cdot\nabla\varphi\,dx
-\int_\Omega\E\cdot\curl\nabla\varphi\,dx=0,
$$
because the tangential trace of $\E$ vanishes. Since $\curl\nabla\varphi=0$, it follows that
$$
\left\langle(\curl\E)\cdot\n,\varphi|_{\partial\Omega}\right\rangle=0
$$
for every $\varphi\in H^1(\Omega)$, hence $(\curl\E)\cdot\n=0$ in the sense of normal traces.}.

We define the space of admissible functions for the electric formulation to be
$$
\mathcal S(\Omega):=\{\E\in\mathbf H_0(\curl;\Omega):\ \div\E=0\}.
$$
Since $\Omega$ is bounded, convex and Lipschitz, the embedding
$\mathcal S(\Omega)\hookrightarrow\mathbf L^2(\Omega)$ is compact
\cite[Thm.~2.17]{AmroucheBernardiDaugeGirault1998}. The quadratic form
$$
\E\longmapsto\int_\Omega|\curl\E|^2\,dx
$$
therefore has a discrete spectrum on $\mathcal S(\Omega)$. Its first eigenvalue is
\begin{equation}\label{Eq:MaxElec}
\lambda_{1}^{\mathrm{Max}}(\Omega)
=\min_{\substack{\E\in\mathcal S(\Omega)\\\E\neq0}}
\frac{\int_\Omega|\curl\E|^2\,dx}
{\int_\Omega|\E|^2\,dx}.
\end{equation}
We refer to \eqref{Eq:MaxElec} as the electric formulation of the first Maxwell eigenvalue.
\paragraph{Magnetic formulation}
Eliminating $\E$ from \eqref{eigen:maxwell} gives
\begin{equation}\label{eq:MaxwellB}
\begin{cases}
\curl\curl \B-\xi^2 \B=0 &\text{in }\Omega,\\
\div \B=0 &\text{in }\Omega,\\
\B\cdot \mathbf{n}=0 &\text{on }\partial\Omega,\\
(\curl\B)\times\mathbf{n}=0 &\text{on }\partial\Omega.
\end{cases}
\end{equation}
The two boundary conditions play different roles in the variational formulation: $\B\cdot\n=0$ is imposed in the energy space, whereas $(\curl\B)\times\n=0$ is the natural boundary condition associated with the $\curl\curl$ operator. 
Note that under the constraint $\B\cdot\mathbf{n}=0$ on $\partial\Omega$, the expression $(\curl\B)\times\mathbf{n}$ only depends on the tangential part $\B_T$ of $\B$ and $\partial_\mathbf{n}\B_T$, but not $\partial_\mathbf{n}(\B\cdot\mathbf{n})$. Therefore, exactly three boundary conditions are imposed on $\B$.
We set
$$
\mathcal S_N(\Omega):=\{\B\in \mathbf H(\mathrm{curl};\Omega)\cap \mathbf H(\mathrm{div};\Omega):\ \div\B=0\ \text{in }\Omega,\ \B\cdot\n=0\ \text{on }\partial\Omega\}.
$$

The electric and magnetic formulations have the same positive eigenvalues and eigenspaces, up to the action of $\curl$. More precisely, let $\lambda>0$ and let $\E$ be an electric eigenfield associated with $\lambda$. Define
\begin{equation}\label{eq:normalized-magnetic-field}
\widehat{\B}:=\frac{1}{\sqrt{\lambda}}\curl\E.
\end{equation}
Then $\div\widehat{\B}=0$ and, by the trace property recalled above, $\widehat{\B}\cdot\n=0$ on $\partial\Omega$. Moreover,
$$
\curl\widehat{\B}=\frac{1}{\sqrt{\lambda}}\curl\curl\E=\sqrt{\lambda}\,\E.
$$
Hence $(\curl\widehat{\B})\times\n=0$ and $\curl\curl\widehat{\B}=\lambda\widehat{\B}$ on $\partial\Omega$.
Thus $\widehat{\B}$ is a magnetic eigenfield associated with $\lambda$.

Conversely, if $\widehat{\B}$ is a magnetic eigenfield associated with $\lambda>0$, then $\E:=\frac{1}{\sqrt{\lambda}}\curl\widehat{\B}$ satisfies $\div\E=0$ and
$$
\E\times\n=\frac{1}{\sqrt{\lambda}}(\curl\widehat{\B})\times\n=0.
$$
Since $\curl\curl\widehat{\B}=\lambda\widehat{\B}$, one also has
$$
\curl\E=\sqrt{\lambda}\,\widehat{\B},\qquad \curl\curl\E=\lambda\E.
$$
The two constructions are therefore inverse to each other. In particular, the positive electric and magnetic spectra coincide, including multiplicities; see also \cite[Lemma~2.1]{CostabelDauge2019}. Furthermore, the correspondence is an $L^2$-isometry on each eigenspace, since
$$
\|\widehat{\B}\|_{L^2(\Omega)}^2=\frac{1}{\lambda}\|\curl\E\|_{L^2(\Omega)}^2 =\|\E\|_{L^2(\Omega)}^2.
$$

Consequently, the first Maxwell eigenvalue admits the equivalent magnetic Rayleigh characterisation
\begin{equation}\label{eq:CF-Maxwell-magnetic}
\lambda_{1}^{\mathrm{Max}}(\Omega)=\min_{\substack{\B\in\mathcal S_N(\Omega)\\ \B\neq0}}\frac{\int_\Omega|\curl\B|^2\,dx}{\int_\Omega|\B|^2\,dx}.
\end{equation}
We refer to \eqref{eq:CF-Maxwell-magnetic} as the magnetic formulation of the first Maxwell eigenvalue.

\subsection{Shape optimisation of the lowest Maxwell eigenvalue: formulation and main results}
To motivate our setting and main results, let us recall the results of \cite{krejvcivrik2025note} on volume and perimeter constraints: the natural scaling
$$\lambda_1^{\mathrm{Max}}(t\O)=t^{-2}\lambda_1^{\mathrm{Max}}(\O)$$ leads to considering the scale invariant functionals 
$$J_{\mathrm{Vol}}(\O):=|\O|^{\frac23}\lambda_1^{\mathrm{Max}}(\O),\, J_{\mathrm{Per}}(\O):=\mathrm{Per}(\O)\lambda_1^{\mathrm{Max}}(\O).$$The main results of \cite{krejvcivrik2025note} can be summarised as follows:
\begin{theorem}\cite{krejvcivrik2025note}\label{Th:Krejcirik}
\begin{enumerate}
\item The optimisation problem 
$$\inf_{\O\subset \R^3\text{ open, Lipschitz}}J_{\mathrm{Vol}}(\O)$$ is ill-posed, even in the class of convex sets: there exists a sequence $\{\O_k\}_{k\in \N}$ of convex, Lipschitz sets with $|\O_k|=1$ such that $J_{\mathrm{Vol}}(\O_k)\underset{k\to\infty}\rightarrow 0$.
\item The optimisation problem 
$$\inf_{\O\subset \R^3\text{ open, Lipschitz}}J_{\mathrm{Per}}(\O)$$ is ill-posed: there exists a sequence $\{\O_k\}_{k\in \N}$ of  Lipschitz sets with $\mathrm{Per}(\O_k)=1$ such that $J_{\mathrm{Per}}(\O_k)\underset{k\to\infty}\rightarrow 0$.
\end{enumerate}
\end{theorem}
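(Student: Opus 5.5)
The plan is to bound $\lambda_1^{\mathrm{Max}}$ from above with explicit test fields in \eqref{Eq:MaxElec}. For item 2 we use the fact that the minimum over $\mathcal S(\Omega)$ equals the infimum of the Rayleigh quotient over fields in $\mathbf H_0(\curl;\Omega)$ after Helmholtz projection. Indeed, if $\F\in\mathbf H_0(\curl;\Omega)$, let $p\in H^1_0(\Omega)$ solve $\Delta p=\div\F$ weakly. Then $\F-\nabla p\in\mathcal S(\Omega)$, since $\nabla p\in\mathbf H_0(\curl;\Omega)$ and $\curl\nabla p=0$. Moreover $\curl(\F-\nabla p)=\curl\F$, and $\F-\nabla p$ is the $L^2$-orthogonal projection of $\F$ onto the orthogonal complement of $\nabla H^1_0(\Omega)$.

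\emph{Item 1 (volume, convex).} Take $D\subset\R^2$ the unit disc and $\O_\varepsilon:=D\times(0,\varepsilon)$, which is convex and Lipschitz. Let $u\in H^1_0(D)$ be a first Dirichlet eigenfunction of $D$ and set $\E(x_1,x_2,x_3):=(0,0,u(x_1,x_2))$. Then $\div\E=\partial_3 u=0$. On the top and bottom faces $\n=\pm e_3$ is parallel to $\E$, so $\E\times\n=0$; on the lateral face $u=0$. Hence $\E\in\mathcal S(\O_\varepsilon)$. Since $\curl\E=(\partial_2u,-\partial_1u,0)$, the Rayleigh quotient equals $\lambda_1^{\mathrm{Dir}}(D)$, so $\lambda_1^{\mathrm{Max}}(\O_\varepsilon)\le\lambda_1^{\mathrm{Dir}}(D)$. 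This gives
\[
J_{\mathrm{Vol}}(\O_\varepsilon)\le (\pi\varepsilon)^{2/3}\lambda_1^{\mathrm{Dir}}(D)\to0 .
\]
Rescaling $\O_\varepsilon$ to unit volume leaves $J_{\mathrm{Vol}}$ unchanged and yields the desired sequence. (For this pancake the perimeter stays comparable to $2|D|$, so $J_{\mathrm{Per}}$ stays bounded, as it must given the lower bound announced in the abstract.)

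\emph{Item 2 (perimeter, Lipschitz).} Here we exploit a nearly disconnected boundary. Let $A:=B_2\setminus\overline{B_1}$ and let $h$ be harmonic in $A$ with $h=1$ on $\partial B_1$ and $h=0$ on $\partial B_2$. The field $\nabla h$ is normal on $\partial A$ and $L^2$-orthogonal to $\nabla H^1_0(A)$. Define $\O_\delta:=A\setminus \overline{T_\delta}$, where $T_\delta$ is a thin tube of radius $\delta$ (say a neighbourhood of a radial segment) joining the two spheres. Then $\O_\delta$ is Lipschitz with connected boundary, and $\Per(\O_\delta)\to\Per(A)$.

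Let $\theta_\delta$ be a cutoff with $\theta_\delta=0$ on a $\delta$-neighbourhood of $T_\delta$, $\theta_\delta=1$ outside a $2\delta$-neighbourhood, and $|\nabla\theta_\delta|\lesssim\delta^{-1}$. Set $\F_\delta:=\theta_\delta\nabla h$. Its tangential trace vanishes: on the spheres because $\nabla h$ is normal there, and on the tube walls because $\theta_\delta=0$ there. Moreover $\curl\F_\delta=\nabla\theta_\delta\times\nabla h$ is supported in a set of volume $O(\delta^2)$, with $|\nabla h|$ bounded, so
\[
\|\curl\F_\delta\|_{L^2}^2\lesssim\delta^{-2}\delta^2=O(1).
\]
This is not yet small, so the estimate must be sharpened. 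The fix is to let the cutoff be logarithmic in the transverse variable: $\theta_\delta(x)=\log(r/\delta)/\log(\delta^{-1/2})$ for $\delta<r<\delta^{1/2}$, where $r$ is the distance to the axis. Then $\int|\nabla\theta_\delta|^2\sim 1/|\log\delta|\to0$ over the tube length, giving $\|\curl\F_\delta\|_{L^2}\to0$, while $\F_\delta\to\nabla h$ in $L^2$.

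Finally, the projection step. Let $\E_\delta:=\F_\delta-\nabla p_\delta$ be as above. Extending $p_\delta$ by zero gives an element of $H^1_0(A)$, so by orthogonality in $A$ we get $\|\E_\delta\|_{L^2}\ge|\langle\F_\delta,\nabla h\rangle|/\|\nabla h\|\to\|\nabla h\|_{L^2(A)}>0$. Hence $\lambda_1^{\mathrm{Max}}(\O_\delta)\to0$ with bounded perimeter, so $J_{\mathrm{Per}}(\O_\delta)\to0$; normalising the perimeter by scaling concludes.

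The main obstacle is exactly this quantitative smallness of $\curl\F_\delta$. A naive cutoff only gives a bound $O(1)$, so a capacity-type (logarithmic) cutoff around the thin tube in $\R^3$ is needed. I expect the two-dimensional capacity of a small disc cross-section to give the $1/|\log\delta|$ decay. If this turns out too delicate, I would instead shrink the tunnel to a point-like hole of radius $\delta$, whose three-dimensional capacity $O(\delta)$ makes the cutoff estimate immediate.
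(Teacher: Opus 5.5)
The paper does not prove this statement. It is quoted from \cite{krejvcivrik2025note}, and the only thing said about the construction is that the perimeter sequence is non-convex. There is therefore no in-paper route to compare against. Judged on its own, your argument is correct and complete.

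\emph{Item 1.} The field $(0,0,u)$ is admissible on $D\times(0,\varepsilon)$. It gives $\lambda_1^{\mathrm{Max}}\leq\lambda_1^{\mathrm{Dir}}(D)$ while the volume is $\pi\varepsilon$. Inside the paper, the same bound (with equality for small $\varepsilon$) could be read off from the Costabel--Dauge formula \eqref{Ed:CDauge}, which is used for Theorem~\ref{Th:CylindricalCollapse}. Your explicit test field avoids relying on the full spectral description of cylinders.

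\emph{Item 2.} The pieces fit together:
\begin{itemize}
\item $\nabla h$ is normal on both spheres and $\theta_\delta$ vanishes on the tube wall, so $\theta_\delta\nabla h\in\mathbf H_0(\curl;\Omega_\delta)$.
\item The logarithmic cutoff gives $\|\curl(\theta_\delta\nabla h)\|_{L^2}^2\lesssim 1/|\log\delta|$. This is the vanishing two-dimensional capacity of a tube of fixed length.
\item Extending $p_\delta$ by zero to $H^1_0(A)$ and using the harmonicity of $h$ is exactly what stops the Helmholtz projection from destroying the $L^2$ mass.
\end{itemize}
You should state explicitly that $\Omega_\delta$ is homeomorphic to a ball, so it carries neither harmonic Dirichlet nor harmonic Neumann fields. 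Your small Rayleigh quotient therefore bounds a genuine positive first eigenvalue in both the electric and the magnetic formulation, with no ambiguity about zero modes. As a side remark, your sequence already consists of domains homeomorphic to the ball. It is not, however, a small perturbation of a fixed domain, which is what the local statement of \cite{arXiv:2607.26983} requires.

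Drop the fallback in your last sentence. A small hole that does not join $\partial B_1$ to $\partial B_2$ leaves $\partial\Omega$ disconnected. The infimum over $\mathcal S(\Omega)$ is then zero merely because harmonic Dirichlet fields exist, and it says nothing about the first positive Maxwell eigenvalue. In the fixed shell $A$ the connecting tunnel has length at least $1$. That is why the logarithmic cutoff, and not a three-dimensional point capacity, is the right tool. Since the logarithmic estimate already works, nothing is lost by removing the fallback.
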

It should be noted that the sequence of sets yielding non-existence in the case of $J_{\mathrm{Per}}$ is non-convex; as a matter of fact, another result of \cite{krejvcivrik2025note} is the fact that 
$$\inf_{\O=(0;a_1)\times(0;a_2)\times(0;a_3)} J_{\mathrm{Per}}(\O)>0.$$ Furthermore, in \cite{arXiv:2607.26983}, it is proved that there are no simply connected local minimisers under perimeter or volume constraints. More precisely, the authors prove that one can make $J_{\mathrm{Per}}(\O_\varepsilon)$ arbitrarily small for sequences of domains $(\O_\varepsilon)_{\varepsilon>0}$ homeomorphic to the ball.
This naturally leads to considering additional geometric constraints on admissible sets $\O$, and the most natural of these is convexity. We thus let 
$$
\mathcal C:=\left\{\Omega\subset\mathbb R^3:\Omega\text{ is nonempty, bounded, open, and convex}\right\},
$$
and we focus on the problem
\begin{equation}\label{Eq:PvMaxwell}
\tag{$\mathscr P$}
\fbox{$\displaystyle m_*:=\inf_{\O\in \mathcal C}J_{\mathrm{Per}}(\O)$}\end{equation}

\subsubsection{Existence of a positive lower bound}
 In view of Theorem \ref{Th:Krejcirik}, the first question to be settled is the positivity of $m_*$; this is our first main result:

\begin{theorem}[Positivity of the infimum under convexity constraints]\label{thm:positive-infimum}
There holds
$$m_*>0.$$
\end{theorem}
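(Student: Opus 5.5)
We argue by contradiction. Suppose $m_*=0$ and take a minimising sequence $\{\O_k\}\subset\mathcal C$ with $J_{\mathrm{Per}}(\O_k)\to0$. Since $J_{\mathrm{Per}}$ is invariant under scaling and rigid motions, we normalise $\mathrm{Per}(\O_k)=1$ and assume each $\O_k$ contains the origin, so $\lambda_1^{\mathrm{Max}}(\O_k)\to0$. For convex sets the perimeter bounds the diameter from below but not from above, so the diameter may blow up. To handle this, we choose John ellipsoids $E_k$ with $E_k\subset\O_k\subset 3E_k$, with semi-axes $a_k\ge b_k\ge c_k$. Perimeter $1$ forces $a_kb_k\lesssim1$. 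Up to a subsequence, exactly one of three regimes occurs:
\begin{enumerate}
\item[(i)] $c_k\gtrsim 1$: non-degenerate;
\item[(ii)] $c_k\to0$ while $b_k\gtrsim1$: planar collapse;
\item[(iii)] $b_k\to0$: one-dimensional collapse, where possibly $a_k\to\infty$.
\end{enumerate}
We derive a contradiction in each regime separately.

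\textbf{Case (i).} Here $\O_k$ is bounded and converges in Hausdorff distance to a convex body $\O$ with nonempty interior. We would like to prove $\liminf\lambda_1^{\mathrm{Max}}(\O_k)\ge\lambda_1^{\mathrm{Max}}(\O)>0$. Take normalised eigenfields $\E_k\in\mathcal S(\O_k)$ and extend them by zero; the extension stays in $\mathbf H(\curl;\R^3)$ and stays divergence-free in the distributional sense because $\E_k\times\n=0$. Convex domains converging to a convex body are uniformly Lipschitz, so the uniform Gaffney/Friedrichs inequality $\|\E\|_{H^1}\lesssim\|\curl \E\|+\|\div\E\|+\|\E\|$ on $\mathbf X_N$ of convex sets holds with a uniform constant. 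This gives $H^1$ bounds and strong $L^2$ compactness. The limit $\E$ has unit norm, is supported in $\overline\O$, and lies in $\mathcal S(\O)$, where one also uses that no harmonic Dirichlet fields exist on convex sets. Lower semicontinuity of $\int|\curl\E|^2$ then gives $0=\lim\lambda_1\ge\lambda_1(\O)$, a contradiction.

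\textbf{Case (ii).} Here $\O_k$ is close to a thin slab over a planar convex set $\omega$. The idea is to project. Write $x=(x',x_3)$ with $x_3$ in the thin direction and $\O_k=\{x'\in\omega_k,\ h_k^-(x')<x_3<h_k^+(x')\}$. In the magnetic formulation, $\int|\curl \B|^2\ge\int|\partial_{x_3}\B'-\nabla'B_3|^2+|\curl'\B'|^2$. The aim is a lower bound by a two-dimensional quantity that stays bounded away from zero. We would average $\B$ in $x_3$ and use $\B\cdot\n=0$ together with $\div\B=0$ to show that the averaged horizontal field, weighted by the thickness, is tangent and divergence-free on $\omega_k$. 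The vertical component would be controlled through a Poincaré-type inequality using the electric formulation, since $E'$ vanishes on the top and bottom faces. This should yield $\lambda_1^{\mathrm{Max}}(\O_k)\gtrsim\min\{\mu(\omega_k),\,\text{thickness}^{-2}\}$, where $\mu$ is a Neumann-type or curl eigenvalue of the planar set, bounded below because $\omega_k$ has bounded diameter in this regime. I expect this to be the \textbf{main obstacle}: the thickness is not uniform, since convex sets thin out near $\partial\omega$, and the weighted projection must be handled with care. The weight $h_k^+-h_k^-$ is concave on $\omega_k$, and I would use this concavity to obtain weighted Poincaré inequalities that are uniform in $k$.

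\textbf{Case (iii).} Here $\O_k$ is a thin tube of cross-section size $b_k\to0$ and length $a_k\lesssim 1/b_k$. Take an electric eigenfield. Its tangential trace vanishes on the lateral boundary, so in each cross-section the transverse components, and also the longitudinal component via $\E\times\n=0$, satisfy Dirichlet-type or curl-type Poincaré inequalities with constant $\sim b_k^{-2}$, since the cross-sections are convex of inradius $\sim b_k$. This gives $\lambda_1^{\mathrm{Max}}(\O_k)\gtrsim b_k^{-2}$, so in fact $J_{\mathrm{Per}}(\O_k)\to+\infty$. The longitudinal component alone has no Dirichlet condition on cross-sections, so we would recover it from $\curl\E$ through the transverse derivatives of $E_3$, using that $E_3$ vanishes on the lateral boundary (tangential). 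This again gives a cross-sectional Dirichlet Poincaré constant $\sim b_k^{-2}$, and hence a contradiction.
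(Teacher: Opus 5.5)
Your set-up is sound: with $\Per(\O_k)=1$ and $E_k\subset\O_k$ one gets $b_k^2\le a_kb_k\lesssim 1$, and the trichotomy is exhaustive up to subsequences. But there is a genuine gap: case (ii) is not proved, and the sketch for case (iii) fails. For (ii) you propose a weighted dimension reduction with degenerating thickness. This is a delicate problem in its own right (the paper carries out such a reduction only for graph domains whose rescaled thickness is bounded below, and leaves the general case open), and it is not needed. In regimes (i) and (ii) you have $a_k\lesssim 1$, hence $\operatorname{diam}(\O_k)\lesssim 1$, and a diameter bound suffices. In the magnetic formulation, $\div\B=0$ and $\B\cdot\n=0$ give $\int_\O B_i=-\int_\O x_i\div\B=0$ for every component. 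The Gaffney inequality on convex domains gives $\int_\O|\nabla\B|^2\le\int_\O|\curl\B|^2$. Hence $\lambda_1^{\mathrm{Max}}(\O)\ge\mu_1^{\mathrm{Neu}}(\O)\ge\pi^2/\operatorname{diam}(\O)^2$ by Payne--Weinberger (Lemma~\ref{lem:maxwell-ge-neumann} and Corollary~\ref{cor:maxwell-diameter-lower-bound}). This settles (i) and (ii) at once and makes your compactness argument in (i) superfluous. That argument is repairable, but the zero extension of $\E_k$ is \emph{not} divergence-free in $\R^3$. Only the tangential trace vanishes, so its distributional divergence is the surface measure $-(\E_k\cdot\n)\,\mathcal H^2$ on $\partial\O_k$. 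What passes to the limit is $\div\E=0$ in the interior of $\O$, which is all you need.

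In (iii), slicing by $\Sigma_t=\O\cap\{x_3=t\}$ is fine in principle for the transverse components: the horizontal tangent to $\partial\Sigma_t$ lies in $T\partial\O$, so $\E'$ has vanishing two-dimensional tangential trace. It fails for $E_3$. The condition $\E\times\n=0$ means $\E=E_n\n$ on $\partial\O$, so $E_3=E_nn_3$ on $\partial\Sigma_t$. This vanishes only where $\partial\O$ is vertical, i.e.\ essentially for cylinders. On a thin ellipsoid $n_3\neq0$ almost everywhere on $\partial\O$, and $\int_{\Sigma_t}E_3$ has no reason to vanish either, so $E_3$ obeys no slice-wise Poincar\'e inequality. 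Also, $\curl\E$ contains $\nabla'E_3$ only through $\partial_2E_3-\partial_3E_2$ and $\partial_3E_1-\partial_1E_3$, so you need Gaffney to isolate it anyway.

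The missing idea is to slice the \emph{magnetic} field. The divergence theorem on $\O\cap\{x_3<t\}$ with $\B\cdot\n=0$ gives $\int_{\Sigma_t}B_3=0$ on every slice of an arbitrary convex domain. Payne--Weinberger on the convex slices (of diameter $\lesssim b_k$) then controls $B_3$. The transverse components are controlled by $\int_\O B_i^2=-\int_\O x_i\,\B\cdot\nabla B_i\le D_2\|\B\|_{L^2}\|\nabla B_i\|_{L^2}$. Gaffney closes the estimate, giving $\lambda_1^{\mathrm{Max}}(\O)\ge c/D_2^2$ for the middle semi-axis $D_2$ of any enclosing ellipsoid (Proposition~\ref{le:maxwell-box-lower}). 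Cauchy's projection formula along the shortest John axis (Lemma~\ref{Le:Cauchy}) gives $\Per(\O)\gtrsim D_2D_3$. Combined, these yield $J_{\mathrm{Per}}(\O)\gtrsim D_3/D_2\ge1$ with no case distinction at all. The paper uses these same ingredients (diameter bound, middle-axis bound, Cauchy's formula), organised instead through diameter normalisation and the dimension of the Hausdorff limit.
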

It might be tempting to use this positivity property to deduce the existence of an optimal shape; this question remains completely open. Here again, let us point to the case of $\curl$-eigenvalue minimisation: in \cite{zbMATH07697301} it is proved that a positive lower bound exists for the minimisation of $\curl$ eigenvalues for volume constrained minimisation. However, the existence of an optimal shape for the problem of \cite{zbMATH07697301} is still open.

Coming back to Theorem \ref{thm:positive-infimum}, our method of proof relies on a study of the possible behaviours for minimising sequences and we start by proving that any minimising sequence either converges (up to subsequences) to a minimiser (in which case the theorem follows) or collapses to a 2 or 1 dimensional domain (and this is where the core analysis takes place). We believe that both steps of the proof (the description of the geometric behaviour and the analysis of limiting configurations) are interesting in their own right, and we thus state these different steps as independent results. The first is a purely geometric theorem.
\begin{theorem}[Geometric behaviour of minimising sequences]\label{thm:minimising-sequence-alternative}
Let $\{\O_k\}_{k\in \N}\subset\mathcal C$ be a minimising sequence for \eqref{Eq:PvMaxwell}. Then there exist points $a_k\in\Omega_k$ such that, after setting
$$
\widetilde\Omega_k:=\frac{\Omega_k-a_k}{\operatorname{diam}(\Omega_k)},
$$
one may extract a subsequence satisfying $\overline{\widetilde\Omega_k}\to K$ in the Hausdorff sense for some compact convex set $K\subset\mathbb R^3$ with $\operatorname{diam}(K)=1$.
Moreover, exactly one of the following occurs:
\begin{itemize}
\item[(i)] $\dim K=3$;
\item[(ii)] $\dim K=2$, in which case $K$ is a compact planar convex set with nonempty relative interior;
\item[(iii)] $\dim K=1$, in which case $K$ is a segment.
\end{itemize}
\end{theorem}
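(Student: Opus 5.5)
This is a purely geometric statement: the minimising property plays no role, and the argument works for any sequence in $\mathcal C$. The plan is to normalise, apply Blaschke's selection theorem, and then classify the limit by its affine dimension.

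\textbf{Step 1: normalisation and compactness.} Pick any $a_k\in\Omega_k$ (for instance a point of the interior). Since $\Omega_k$ is bounded, open and nonempty, we have $0<\operatorname{diam}(\Omega_k)<\infty$, so $\widetilde\Omega_k$ is well defined. The set $\widetilde\Omega_k$ is an open bounded convex set containing $0$ with $\operatorname{diam}(\widetilde\Omega_k)=1$. Hence $\overline{\widetilde\Omega_k}\subset \overline{B(0,1)}$ is a nonempty compact convex set. By the Blaschke selection theorem, the family of nonempty compact convex subsets of a fixed ball is compact for the Hausdorff distance, and Hausdorff limits of convex sets are convex. We may therefore extract a subsequence with $\overline{\widetilde\Omega_k}\to K$, where $K$ is compact, convex, and satisfies $0\in K\subset\overline{B(0,1)}$.

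\textbf{Step 2: $\operatorname{diam}(K)=1$.} The diameter is continuous for Hausdorff convergence of compact sets: if $d_H(A,B)\le\varepsilon$, then $|\operatorname{diam}A-\operatorname{diam}B|\le2\varepsilon$. Moreover $\operatorname{diam}(\overline{\widetilde\Omega_k})=\operatorname{diam}(\widetilde\Omega_k)=1$. Passing to the limit gives $\operatorname{diam}(K)=1$. In particular $K$ contains two distinct points, so $K$ is not a single point.

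\textbf{Step 3: trichotomy.} Let $\dim K$ denote the dimension of the affine hull of $K$. It lies in $\{0,1,2,3\}$, and $\dim K=0$ is excluded by Step 2. The three alternatives are mutually exclusive since they correspond to different values of this integer. The only content is the description in each case, which follows from the standard fact that a convex set has nonempty interior relative to its affine hull.
\begin{itemize}
\item[(ii)] If $\dim K=2$, then $K$ lies in a plane and has nonempty relative interior in it.
\item[(iii)] If $\dim K=1$, then $K$ is a compact convex subset of a line with at least two points, hence a nondegenerate segment of length $1$.
\end{itemize}

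\textbf{Main obstacle.} There is essentially none: the only nontrivial inputs are Blaschke's theorem and the continuity of the diameter. One should note only that convergence is stated for the closures, since Hausdorff convergence is used on compact sets. If one prefers $a_k$ with more structure, such as the centroid, the argument is unchanged.
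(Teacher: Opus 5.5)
Your proof is correct and follows essentially the same route as the paper: normalise by the diameter, apply Blaschke selection inside a fixed ball, pass to the limit in the diameter to get $\operatorname{diam}(K)=1$, and classify $K$ by the dimension of its affine hull. You are slightly more careful than the paper on two points. First, you exclude $\dim K=0$ explicitly via $\operatorname{diam}(K)=1$. Second, you note that $0\in\widetilde\Omega_k$ already forces $\widetilde\Omega_k\subset\overline{B(0,1)}$, so no further translation is needed.
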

This is a minor variation on the Blaschke selection theorem.
The next two theorems allow to handle the lower-dimensional collapse cases. We begin by excluding the case of a one-dimensional collapse:
 
\begin{theorem}[The one-dimensional collapse case]\label{thm:segment-collapse-product-blowup}
Let $\{\O_k\}_{k\in \N}\subset\mathcal C$ satisfy $\operatorname{diam}(\Omega_k)=1$ for any $k\in \N$. Assume that, up to rigid motions,
$$
\overline{\Omega_k}\to S:=\{(0,0,t):\ 0\leq t\leq 1\}
\qquad\text{in the Hausdorff sense}.
$$
Then
$$
J_{\mathrm{Per}}(\O_k)\underset{k\to \infty}\rightarrow +\infty.$$
\end{theorem}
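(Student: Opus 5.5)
The plan is to show that $\lambda_1^{\mathrm{Max}}(\Omega_k)\gtrsim w_k^{-2}$ and $\operatorname{Per}(\Omega_k)\gtrsim w_k$, where
$$
w_k:=\sup_t \operatorname{diam}\bigl(\Omega_k\cap\{x_3=t\}\bigr)\to0 .
$$
Together these give $J_{\mathrm{Per}}(\Omega_k)\gtrsim w_k^{-1}\to+\infty$.

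\textbf{Perimeter bound.} This part is elementary. Since $\overline{\Omega_k}\to S$ in the Hausdorff sense and $\operatorname{diam}(\Omega_k)=1$, we have $w_k\to0$. The set $\Omega_k$ contains two points at height difference $\ge 1/2$ and a planar slice of diameter $w_k$. By convexity it therefore contains the convex hull of a segment of length $w_k$ and a point at distance $\gtrsim 1$ from it, that is, a triangle of area $\gtrsim w_k$. Taking the orthogonal projection onto a plane containing (a translate of) this triangle does not increase perimeter, and the projection of a convex body covers the projected triangle twice. Hence $\operatorname{Per}(\Omega_k)\ge c\,w_k$.

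\textbf{Eigenvalue bound: tools.} Fix an electric eigenfield $\E$ and the associated $\widehat\B=\lambda^{-1/2}\curl\E$, with $\lambda=\lambda_1^{\mathrm{Max}}(\Omega_k)$ and $\|\E\|=\|\widehat\B\|=1$. On convex domains the Gaffney–Friedrichs inequality holds for both boundary conditions (tangential for $\E$, normal for $\widehat\B$):
$$
\|\nabla\E\|^2\le\|\curl\E\|^2+\|\div\E\|^2=\lambda,\qquad \|\nabla\widehat\B\|^2\le\lambda .
$$
We then slice $\Omega_k$ horizontally into planar convex sets $\omega_t$ of diameter at most $w_k$ and use two observations.

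\emph{Transverse part of $\E$.} At $x\in\partial\omega_t\subset\partial\Omega_k$, the condition $\E\times\n=0$ means $\E\parallel\n$. The in-plane tangent $\tau$ to $\partial\omega_t$ is orthogonal to $\n$, so the in-plane field $\E'=(E_1,E_2)$ has vanishing tangential trace on $\partial\omega_t$. A 2D convex domain has no harmonic fields, so the 2D Friedrichs/Poincaré inequality for such fields, with constant $\min(\mu_1^{D},\mu_2^{N})(\omega_t)\gtrsim w_k^{-2}$, gives
$$
\int_{\omega_t}|\E'|^2\le Cw_k^2\int_{\omega_t}\bigl(|\div'\E'|^2+|\curl'\E'|^2\bigr)\le Cw_k^2\int_{\omega_t}|\nabla\E|^2 .
$$
Integrating in $t$ yields $\|\E'\|^2\le Cw_k^2\lambda$.

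\emph{Axial part of $\widehat\B$.} Since $\div\widehat\B=0$ and $\widehat\B\cdot\n=0$, the flux $\int_{\omega_t}\widehat B_3$ is independent of $t$. It vanishes because the slices degenerate at the extremities. By the Poincaré–Wirtinger inequality on $\omega_t$, $\|\widehat B_3\|^2\le Cw_k^2\lambda$.

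\textbf{Main obstacle: closing the estimate.} Suppose, for contradiction, that $\lambda w_k^2\le M$ along a subsequence. The bounds above do not yet control the remaining components $E_3$ and $\widehat\B'$. For these I would exploit the curl relations $\sqrt\lambda\,\widehat\B=\curl\E$ and $\sqrt\lambda\,\E=\curl\widehat\B$, in particular
$$
\sqrt\lambda\,E_3=\partial_1\widehat B_2-\partial_2\widehat B_1 .
$$
Test this identity against $E_3$ and integrate by parts in the transverse variables only. On $\partial\omega_t$, use that $\E\parallel\n$ with $|n_3|$ small on most of the lateral boundary, and bound the end caps by the convex geometry. The aim is to show that $\|E_3\|^2$ is controlled by $\|\E'\|\,\|\nabla\widehat\B\|+\|\widehat B_3\|\,\|\nabla\E\|$ plus boundary terms, all $O(w_k\sqrt\lambda)$-small relative to $\sqrt\lambda$.

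If this works, it gives $\|E_3\|^2\le C\sqrt{M}\,w_k\to0$, contradicting $\|\E\|=1$ once we also know $\|\E'\|\to0$. Hence $\lambda w_k^2\to\infty$, and in fact $\lambda\gtrsim w_k^{-2}$. Combined with the perimeter bound this gives the claim. If the boundary terms resist, the fallback is to rescale the transverse variables by $w_k^{-1}$ and pass to a limit. The rescaled fields would converge to a solution of a limiting problem on a 1D collapse, which should be forced to vanish, again contradicting the normalisation.
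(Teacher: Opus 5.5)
\textbf{The step you flag yourself is a genuine gap.} Your perimeter bound is fine and is essentially the paper's: a triangle inside a planar projection, combined with the Cauchy projection formula. Your bound on $\widehat B_3$ (zero flux through horizontal slices, Poincar\'e--Wirtinger on slices, then Gaffney) is exactly the paper's estimate \eqref{Eq:Savo3}. What is missing is any control of $\widehat\B'=(\widehat B_1,\widehat B_2)$, equivalently of $E_3$, so $\lambda_1^{\mathrm{Max}}(\Omega_k)\gtrsim w_k^{-2}$ is not established.

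\textbf{Your proposed route cannot close.} Take a thin circular cylinder $\omega\times(0,1)$ and the TM field $\E=(0,0,\varphi_1^{\mathrm{Dir}}(x_1,x_2))$. It is an $L^2$-normalised eigenfield; it is not the first one, but your estimate does not use minimality. For it, $\E'=0$, $\widehat B_3=0$, and $E_3=0$ on the lateral boundary. So every term on your proposed right-hand side, transverse boundary terms included, vanishes, while $\|E_3\|=1$. The reason is structural: testing $\sqrt\lambda\,E_3=\partial_1\widehat B_2-\partial_2\widehat B_1$ against $E_3$ and substituting the other two curl relations only returns $\|E_3\|^2=\|\widehat\B'\|^2$, up to terms in $\partial_3\E'$ and boundary terms. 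That is just the $L^2$-isometry between $\E$ and $\widehat\B$, and it carries no smallness.

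\textbf{The contradiction scheme is also mis-set.} By \eqref{Ed:CDauge}, thin cylinders have $\lambda w^2$ bounded, so $\lambda w_k^2\to\infty$ is false in general. Under the hypothesis $\lambda w_k^2\le M$ you only get $\|\E'\|^2\le CM$, not $\|\E'\|\to0$. The hypothesis you need to refute is $\lambda w_k^2\to0$. The rescaling fallback is a hope rather than an argument.

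\textbf{The missing idea uses only the magnetic field.} For $i=1,2$, since $\div\widehat\B=0$ in $\Omega_k$ and $\widehat\B\cdot\n=0$ on $\partial\Omega_k$, and using $\|\widehat\B\|=1$,
\[
\int_{\Omega_k}\widehat B_i^2=\int_{\Omega_k}\widehat B_i\,\widehat\B\cdot\nabla x_i=-\int_{\Omega_k}x_i\,\widehat\B\cdot\nabla\widehat B_i\le\Bigl(\sup_{\Omega_k}|x_i|\Bigr)\,\|\nabla\widehat B_i\|_{L^2(\Omega_k)}.
\]
Gaffney then gives $\|\widehat B_i\|^2\le\sqrt\lambda\,\sup_{\Omega_k}|x_i|$. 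Combined with your $\widehat B_3$ bound, this yields $\lambda\ge c\,\bigl(\sup_{\Omega_k}|(x_1,x_2)|\bigr)^{-2}$. This is Proposition~\ref{le:maxwell-box-lower}, and your estimate on $\E'$ becomes unnecessary.

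\textbf{You also need the paper's normalisation.} For this to be the right scale, move a diameter onto $[0,e_3]$, so that $\overline{\Omega_k}\subset\overline{\omega_k}\times[0,1]$ with $\omega_k$ the horizontal projection. Then use $\operatorname{diam}(\omega_k)$ in place of your slice width $w_k$. In your original frame, a residual tilt $\theta_k\gg w_k$ gives $\Omega_k$ a horizontal extent of order $\theta_k$ while $\Per(\Omega_k)$ is of order $w_k$, so the two bounds no longer match. After normalisation, take the triangle with vertices $0$, $e_3$ and a point of horizontal offset at least $\operatorname{diam}(\omega_k)/2$. It gives $\Per(\Omega_k)\ge c\operatorname{diam}(\omega_k)$, hence $J_{\mathrm{Per}}(\Omega_k)\ge c/\operatorname{diam}(\omega_k)\to+\infty$.
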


We then consider the case of a two-dimensional collapse:

\begin{theorem}[The two-dimensional collapse case]\label{thm:planar-collapse-liminf}
There exists a universal constant $c_{\Box}>0$ such that the following holds: for any $\{\O_k\}_{k\in \N}\subset\mathcal C$ satisfying $\operatorname{diam}(\Omega_k)=1$ for any $k\in \N$ and such that, up to rigid motions,
$$
\overline{\Omega_k}\to K=\omega\times\{0\}
\qquad\text{in the Hausdorff sense},
$$
where $\omega\subset\mathbb R^2$ is a bounded convex set with nonempty interior there holds$$
\liminf_{k\to\infty}J_{\mathrm{Per}}(\Omega_k)\geq c_{\Box}.
$$
\end{theorem}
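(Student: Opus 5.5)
The plan is to reduce everything to a uniform lower bound of the form
$$
\lambda_1^{\mathrm{Max}}(\Omega_k)\ \geq\ \frac{c}{\rho(\omega)^2}\,(1+o(1)),
$$
where $\rho(\omega)$ is the inradius of the limit planar set. This is then combined with the asymptotics of the perimeter. Since $\overline{\Omega_k}\to\omega\times\{0\}$ and perimeter is continuous along Hausdorff-converging convex bodies (including degenerate limits, where it counts both faces), we have $\Per(\Omega_k)\to 2|\omega|$. For a planar convex set of diameter $1$ one has $|\omega|\geq c\,\rho(\omega)\operatorname{diam}(\omega)=c\,\rho(\omega)$; this follows from comparing the minimal width with the inradius ($w\le 3\rho$ in the plane) and using a triangle spanned by a diameter and a width. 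So a bound $\lambda_1^{\mathrm{Max}}(\Omega_k)\gtrsim \rho(\omega)^{-2}$ gives $J_{\Per}(\Omega_k)\gtrsim \rho(\omega)^{-1}\geq 2$. Any bound $\lambda_1^{\mathrm{Max}}(\Omega_k)\gtrsim 1/(\rho\cdot\operatorname{diam})$ would suffice as well. The weaker bound is needed only if thin $\omega$ must be handled: $\omega$ can be arbitrarily thin as long as $\operatorname{diam}=1$, and the constant $c_\Box$ must be universal.

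The analytic tool is the Gaffney--Grisvard identity for convex domains. For $\E\in\mathcal S(\Omega)$, convexity gives $\E\in \mathbf H^1(\Omega)$ and
$$
\int_\Omega|\curl\E|^2=\int_\Omega|\curl\E|^2+|\div\E|^2\ \ge\ \int_\Omega|\nabla\E|^2 ,
$$
since the boundary curvature term has a sign (see \cite{AmroucheBernardiDaugeGirault1998}). It then remains to prove a Poincaré inequality for $\mathbf H^1$ fields satisfying $\E\times\n=0$, i.e.\ $\E$ parallel to $\n$ on $\partial\Omega_k$. This is where the projection arguments enter. By convexity, $\Omega_k=\{(x',x_3):x'\in\omega_k,\ g_k(x')<x_3<f_k(x')\}$ with $g_k$ convex, $f_k$ concave, $f_k-g_k\to0$ uniformly, and $\omega_k\to\omega$. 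I will split $\E=(\E',E_3)$ and treat the two pieces separately.
\begin{enumerate}
\item \emph{Horizontal part.} On the graph parts of the boundary, $\E\parallel\n$ forces $|\E'|\le |\nabla f_k|\,|E_3|$, and similarly for $g_k$. Integrating along vertical fibres, which have length $h_k(x')=f_k-g_k\to 0$, then gives a one-dimensional Poincaré inequality $\int|\E'|^2\lesssim \sup h_k^2\int|\partial_3\E'|^2$. The error comes from the set where $|\nabla f_k|$ or $|\nabla g_k|$ is large.
\item \emph{Vertical part.} On each horizontal slice $\{x_3=t\}$, the field satisfies $E_3=0$ on the slice boundary wherever the normal is horizontal. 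More generally, $|E_3|\le |\n'|^{-1}|n_3|\,|\E'|$ there. A two-dimensional Dirichlet-type inequality on slices, in the form of Hersch's inradius bound $\lambda_1^{\rm Dir}(\omega_t)\ge \pi^2/(4\rho(\omega_t)^2)\ge \pi^2/(4\rho(\omega)^2)(1+o(1))$, then controls $\int |E_3|^2$ by $\int|\nabla' E_3|^2$.
\end{enumerate}
Summing the two estimates gives the desired bound, after choosing a threshold on the boundary slope. The part of the boundary where the slope is moderate is charged to one of the two estimates according to whether $|n_3|\ge 1/2$ or $|\n'|\ge 1/2$.

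The main obstacle is the rim region where $\n$ is neither close to vertical nor close to horizontal. There neither the fibre Poincaré inequality for $\E'$ nor the slice Dirichlet inequality for $E_3$ applies directly, and the boundary coupling $\E\parallel \n$ mixes the components. I would first try to show, using convexity and the thinness $\sup h_k\to0$, that this region has a projection onto the plane of area $o(|\omega|)$. I would then absorb its contribution with a trace inequality on thin convex domains, at the cost of a factor $1+o(1)$. If this fails, the fallback is to aim only for the weaker bound $\lambda_1^{\mathrm{Max}}\gtrsim 1/\rho(\omega)$ and use the full slack $|\omega|\gtrsim \rho$ to swallow the rim. Finally, note that the statement is a $\liminf$, so all $o(1)$ terms are harmless, and the constant obtained depends on nothing but the absolute constants in the Hersch and planar geometric inequalities.
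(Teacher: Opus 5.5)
The geometric reduction ($\Per(\Omega_k)\to 2|\omega|$, $|\omega|\gtrsim\rho(\omega)$) and the Gaffney step are fine. There is, however, a genuine gap in the control of the vertical component $E_3$.

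\textbf{Why the slice estimate fails.} Your slice inequality needs $E_3$ to (nearly) vanish on $\partial\Sigma_t$. The constraint $\E\parallel\n$ gives this only where the normal is close to horizontal. Where $|n_3|\geq 1/2$ it yields only $|E_3|\le (|n_3|/|\n'|)\,|\E'|$, which is empty when $|\n'|$ is small. Nothing forces collapsing convex domains to have a steep lateral wall. Take lens-type domains $\Omega_k=\{(x',x_3):x'\in\omega,\ |x_3|<\varepsilon_k h(x')\}$ with $h=\operatorname{dist}(\cdot,\partial\omega)$, which is concave.
\begin{itemize}
\item These lie in $\mathcal C$, have diameter $1$, and collapse onto $\overline\omega\times\{0\}$.
\item Their normals satisfy $|\n'|=\varepsilon_k/\sqrt{1+\varepsilon_k^2}$ a.e., so your ``rim region'' of intermediate normals is empty and the whole boundary is charged to the fibre estimate.
\item Every horizontal slice has its entire boundary on the nearly flat faces, where $E_3$ is unconstrained.
\end{itemize}
The fibre estimate bounds $\E'$, up to an error involving traces of $E_3$, but gives nothing on $E_3$ itself. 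So for these domains your scheme provides no bound of $\int|E_3|^2$ by $\int|\nabla\E|^2$ at all. Absorbing a small region with a trace inequality, or falling back to $\lambda_1^{\mathrm{Max}}\gtrsim 1/\rho$, does not help: what is missing is an entire estimate, not a small region or a lost power. (Thin-domain trace constants also degenerate like $1/h_k$, so that absorption would be delicate anyway.)

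The bound does hold for lenses, but through a different mechanism. Along each fibre,
$$\E'(x',f_k)-\E'(x',g_k)=-\nabla h_k\,E_3(x',f_k)+\nabla g_k\big(E_3(x',g_k)-E_3(x',f_k)\big).$$
This roughly gives a Hardy-type control $\int_{\omega_k}|\nabla h_k|^2E_3^2/h_k\lesssim\int|\partial_3\E'|^2$ plus lower-order terms. Making that uniform would require a weighted Poincar\'e inequality valid for all concave profiles $h_k$, glued to the lateral-wall mechanism. That is a substantial analysis absent from your plan.

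\textbf{How the paper avoids this.} The paper uses the magnetic formulation and never tracks boundary normals.
\begin{itemize}
\item Since $\B\cdot\n=0$ and $\div\B=0$, one has $\int_{\Sigma_t}B_3=0$ on every cross-section. A mean-zero (Payne--Weinberger) inequality then applies on sections with no boundary condition needed.
\item The sections are taken perpendicular to the \emph{diameter} direction $e_k$ of $\omega_k$, not horizontally, so they have diameter $\lesssim w_k$.
\item The other two components are handled by $\int B_i^2=-\int x_i\,\B\cdot\nabla B_i$.
\end{itemize}
This is Proposition~\ref{le:maxwell-box-lower}. It gives $\lambda_1^{\mathrm{Max}}(\Omega_k)\geq c/w_k^2$ for any convex set in a $d_k\times w_k\times 2\delta_k$ box. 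Combined with $\Per(\Omega_k)\geq 2|\omega_k|\geq d_kw_k$, it yields $J_{\mathrm{Per}}(\Omega_k)\geq c\,d_k/w_k\geq c$. In your electric setting this zero-flux device is unavailable: $\E$ is normal on $\partial\Omega_k$, so its flux through sections does not vanish.
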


We formulate the following conjecture:
\begin{conjecture}\label{Co:NoExistence}
There exists no solution $\O^*$ to \eqref{Eq:PvMaxwell}, and every minimising sequence collapses to a planar domain. Moreover, the infimum of $J_{\mathrm{Per}}(\O)$ equals $2\pi j_{0,1}^2$ (see Theorem~\ref{Th:CylindricalCollapse}).
\end{conjecture}

\subsubsection{Two results in favour of the planar collapse of minimising sequences}
At the moment, proving the planar collapse of minimising sequences is out of reach with the methods proposed in the article. 
Nevertheless, in order to back Conjecture \ref{Co:NoExistence}, let us give two results. The first one shows that, when optimising among the class of cylindrical domains, a planar collapse is always favourable:
\begin{theorem}\label{Th:CylindricalCollapse}
Let $\omega\subset\mathbb R^2$ be a bounded convex domain. Let, for any $\ell>0$, $\omega_\ell:=\omega\times (0;\ell)$. Then 
$$\inf_{\ell>0}J_{\mathrm{Per}}(\omega_\ell)\geq 2\pi j_{0,1}^2,$$ where $j_{0,1}$ denotes the first positive zero of the Bessel function $J_0$, and equality holds if, and only if, up to multiplicative scaling,
$$\omega=\mathbb B(0;1)\subset \R^2.$$ In this case, the infimum is only reached as $\ell\to 0^+$.\end{theorem}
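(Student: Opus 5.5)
The plan is to compute $\lambda_1^{\mathrm{Max}}(\omega_\ell)$ explicitly by separation of variables, and then to reduce the claim to two classical planar inequalities: Faber--Krahn and Payne--Weinberger. Write $\lambda^D_1(\omega)$ for the first Dirichlet Laplacian eigenvalue of $\omega$, and $\mu_2^N(\omega)$ for its first nonzero Neumann eigenvalue.

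\textbf{Step 1: the first eigenvalue of the cylinder.} I first show that
$$
\lambda_1^{\mathrm{Max}}(\omega_\ell)=\min\Big(\lambda_1^D(\omega),\ \mu_2^N(\omega)+\frac{\pi^2}{\ell^2}\Big).
$$
For the upper bound I use explicit test fields.
\begin{itemize}
\item \emph{TM candidate.} Let $u$ be a first Dirichlet eigenfunction of $\omega$ and take $\E=(0,0,u(x'))$. Then $\div\E=0$. On the lateral boundary $\E\times\n=0$ because $u=0$ there, and on the top and bottom $\E$ is parallel to $\n=\pm e_3$. Its Rayleigh quotient is $\lambda_1^D(\omega)$.
\item \emph{TE candidate.} Let $v$ be a Neumann eigenfunction for $\mu_2^N$ and take $\E=\big((\nabla' v)^\perp\sin(\pi x_3/\ell),0\big)$. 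This field is divergence free, and $\E\times\n=0$ on $\partial\omega_\ell$: on the lateral boundary $(\nabla'v)^\perp$ is normal because $\partial_\nu v=0$, and on the caps the sine vanishes. A direct computation gives Rayleigh quotient $\mu_2^N+\pi^2/\ell^2$.
\end{itemize}
For the lower bound I decompose an arbitrary eigenfield. Expand $\E_3$ in $\cos(k\pi x_3/\ell)$ and $\E'=(\E_1,\E_2)$ in $\sin(k\pi x_3/\ell)$. For each mode, use the planar Helmholtz decomposition of $\E'$ into a gradient of an $H^1_0$ function plus a curl of an $H^1$ (Neumann) function. This is legitimate because $\omega$ is convex, so $H^2$ regularity holds. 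The problem then reduces to the scalar spectra
$$
\big\{\lambda^D_j+(k\pi/\ell)^2\big\}_{k\ge0}\quad\text{(TM)},\qquad \big\{\mu^N_j+(k\pi/\ell)^2\big\}_{j\ge2,\,k\ge1}\quad\text{(TE)}.
$$
The TE family can also be written with $k\ge 0$: its $k=0$ members vanish because they are gradients, which are excluded by $\div\E=0$ and the boundary condition. I expect this completeness of the TE/TM decomposition on a merely Lipschitz convex cross-section to be the main technical point. If it causes trouble, I would argue variationally mode by mode on the Fourier coefficients, using the Friedrichs-type inequality on convex domains to avoid regularity issues.

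\textbf{Step 2: lower bound.} We have $\Per(\omega_\ell)=2|\omega|+\ell\Per(\omega)$. I split into two cases according to which term realises the minimum in Step 1.
\begin{itemize}
\item \emph{Case $\lambda_1^D\le\mu_2^N+\pi^2/\ell^2$.} Then $J_{\mathrm{Per}}(\omega_\ell)\ge 2|\omega|\lambda^D_1(\omega)\ge 2\pi j_{0,1}^2$ by Faber--Krahn in the plane.
\item \emph{Otherwise.} Expanding the product and applying AM--GM to the terms $\ell\Per(\omega)\mu_2^N+\pi^2\Per(\omega)/\ell$ gives
$$
J_{\mathrm{Per}}(\omega_\ell)\ge (2|\omega|+\ell\Per(\omega))\Big(\mu_2^N+\frac{\pi^2}{\ell^2}\Big)\ge 2\pi\Per(\omega)\sqrt{\mu_2^N}.
$$
Payne--Weinberger for convex sets gives $\mu_2^N\ge\pi^2/\operatorname{diam}(\omega)^2$, and convexity gives $\Per(\omega)\ge 2\operatorname{diam}(\omega)$. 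Together these yield $J_{\mathrm{Per}}(\omega_\ell)\ge 4\pi^2$. Since $4\pi^2\approx 39.5>2\pi j_{0,1}^2\approx 36.3$, this case is \emph{strictly} above the claimed bound.
\end{itemize}

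\textbf{Step 3: equality.} By Step 2, equality can only occur in the first case. There we need equality in Faber--Krahn, so $\omega$ is a disc. Moreover, the perimeter term $\ell\Per(\omega)\lambda_1^D$ is strictly positive for every $\ell>0$. Hence $J_{\mathrm{Per}}(\omega_\ell)>2\pi j_{0,1}^2$ for all $\ell>0$, and the bound is never attained.

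Conversely, take $\omega=\mathbb B(0;1)$. For small $\ell$ the Dirichlet branch is the minimum in Step 1, so
$$
J_{\mathrm{Per}}(\omega_\ell)=(2\pi+2\pi\ell)j_{0,1}^2\to2\pi j_{0,1}^2\quad\text{as }\ell\to0^+.
$$
This shows that the infimum equals $2\pi j_{0,1}^2$ and is reached only in the limit $\ell\to0^+$.
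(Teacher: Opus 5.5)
Your proposal is correct and follows the paper's proof. Both use the Costabel--Dauge formula $\lambda_1^{\mathrm{Max}}(\omega_\ell)=\min\{\lambda_1^{\mathrm{Dir}}(\omega),\mu_1^{\mathrm{Neu}}(\omega)+\pi^2/\ell^2\}$, Faber--Krahn on the Dirichlet branch, and, on the other branch, Payne--Weinberger with $\Per(\omega)\ge 2\operatorname{diam}(\omega)$ and $x+1/x\ge 2$ (your AM--GM) to get $4\pi^2>2\pi j_{0,1}^2$, with the disc and $\ell\to0^+$ for sharpness. The only differences are that the paper simply cites Costabel--Dauge where you re-derive it, and your equality analysis is more explicit: strict Faber--Krahn applies because the bound $2|\omega|\lambda_1^{\mathrm{Dir}}(\omega)$ is uniform in $\ell$, and non-attainment follows from $\ell\Per(\omega)\lambda_1^{\mathrm{Dir}}(\omega)>0$.
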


The second one deals with a dimensionally reduced version of \eqref{Eq:PvMaxwell}. More specifically, we let  $\omega\subset\mathbb R^2$ be a bounded convex domain with $\mathscr C^{1,1}$ boundary. We fix two constants $0<\underline h< \overline h$, and we set 
$$
\mathrm{Adm}(\omega):=\left\{h\in W^{2,\infty}(\omega):h\text{ is concave and }\underline h\leq h\leq\overline h\text{ a.e. in }\omega\right\}.
$$
Finally, for any $h\in \mathrm{Adm}(\omega)$ and any $\varepsilon>0$, we define
\begin{equation}\label{eq:thin-graph-domain}
\Omega_{\varepsilon,h}:=\{(x,z)\in\mathbb R^3:\ x\in\omega,\ 0<z<\varepsilon h(x)\}
\end{equation}
and we set
\begin{equation}\label{def:lambda-weight}
\lambda_1^{\mathrm{weight}}(h):=\inf_{u\in H^2(\omega)\cap H^1_0(\omega)\setminus\{0\}}
\frac{\int_\omega h(x)\left(\div\left(\frac1{h(x)}\nabla u\right)\right)^2}
     {\int_\omega \frac1{h(x)}|\nabla u(x)|^2}.
\end{equation}
The first result is the following:
\begin{theorem}\label{Th:CvgDR}
For any $h\in \mathrm{Adm}(\omega)$, there holds \begin{equation}\label{Eq:EigenvDR}
\lambda_1^{\mathrm{Max}}(\O_{\varepsilon,h})\underset{\varepsilon\to 0^+}\rightarrow \lambda_1^{\mathrm{weight}}(h).\end{equation}
\end{theorem}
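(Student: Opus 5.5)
The plan is to prove the two inequalities $\limsup_{\varepsilon\to0^+}\lambda_1^{\mathrm{Max}}(\O_{\varepsilon,h})\le\lambda_1^{\mathrm{weight}}(h)$ and $\liminf_{\varepsilon\to0^+}\lambda_1^{\mathrm{Max}}(\O_{\varepsilon,h})\ge\lambda_1^{\mathrm{weight}}(h)$. Both use the magnetic formulation \eqref{eq:CF-Maxwell-magnetic}, because its only essential boundary condition, $\B\cdot\n=0$, is easy to satisfy exactly on the graph domain \eqref{eq:thin-graph-domain}.

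\textbf{Upper bound.} Fix $u\in H^2(\omega)\cap H^1_0(\omega)$ and write $x\in\omega$, $z\in(0,\varepsilon h(x))$. Define the test field
$$
\B_\varepsilon(x,z):=\left(\frac{\nabla^\perp u(x)}{h(x)},\ z\,\frac{\nabla^\perp u(x)\cdot\nabla h(x)}{h(x)^2}\right).
$$
We check that it is admissible.
\begin{itemize}
\item Divergence: $\div_x(\nabla^\perp u/h)=-\nabla^\perp u\cdot\nabla h/h^2$, which is cancelled by $\partial_z$ of the third component, so $\div\B_\varepsilon=0$.
\item Top boundary $z=\varepsilon h$: the normal is proportional to $(-\varepsilon\nabla h,1)$, and $\B_\varepsilon\cdot(-\varepsilon\nabla h,1)=-\varepsilon\nabla h\cdot\nabla^\perp u/h+\varepsilon\nabla^\perp u\cdot\nabla h/h=0$.
\item Bottom boundary $z=0$: the third component vanishes there.
\item Lateral boundary: since $u=0$ on $\partial\omega$, $\nabla^\perp u$ is tangent to $\partial\omega$.
\end{itemize}
Because $h\in W^{2,\infty}$ and $\nabla u\in H^1\subset L^p$, $\B_\varepsilon$ lies in $\mathcal S_N(\O_{\varepsilon,h})$.

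Now compute the Rayleigh quotient. The denominator is $\int|\B_\varepsilon|^2=\varepsilon\int_\omega|\nabla u|^2/h+O(\varepsilon^3)$. For the curl, the vertical component equals $\curl_x(\nabla^\perp u/h)=\div(\nabla u/h)$ and does not depend on $z$. The horizontal components are $\nabla_x^\perp$ of the third component, which is $O(\varepsilon)$ in $L^2$ pointwise in $z$. Hence $\int|\curl\B_\varepsilon|^2=\varepsilon\int_\omega h\,(\div(\nabla u/h))^2+O(\varepsilon^3)$. Dividing by $\varepsilon$ and minimising over $u$ gives the $\limsup$ inequality; in particular $\lambda_1^{\mathrm{Max}}(\O_{\varepsilon,h})$ is bounded uniformly in $\varepsilon$.

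\textbf{Lower bound.} Let $\B_\varepsilon$ be a first magnetic eigenfield. Rescale $z=\varepsilon y$ onto the fixed domain $\O_{1,h}$, normalise so that $\varepsilon^{-1}\int_{\O_{\varepsilon,h}}|\B_\varepsilon|^2=1$, and write $\B_\varepsilon=(B'_\varepsilon,B^3_\varepsilon)$ with $B'_\varepsilon$ the horizontal part.

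The key compactness tool is the Gaffney inequality with constant one on convex domains: for $\B\cdot\n=0$,
$$
\int|\nabla\B|^2\le\int|\curl\B|^2+|\div\B|^2 .
$$
Its constant is independent of $\varepsilon$. Combined with the uniform eigenvalue bound, it gives, after rescaling:
\begin{itemize}
\item $\nabla_x B'_\varepsilon$ is bounded in $L^2(\O_{1,h})$;
\item $\partial_y B'_\varepsilon=O(\varepsilon)$;
\item $B^3_\varepsilon=O(\varepsilon)$, using the divergence constraint and $\B\cdot\n=0$ on the bottom.
\end{itemize}
Hence, up to a subsequence, $B'_\varepsilon\to B^0(x)$ strongly in $L^2(\O_{1,h})$, with $B^0$ independent of $y$ and with $L^2$ norm equal to one, so $B^0\neq0$.

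To identify the limit, integrate $\div\B_\varepsilon=0$ in $y$ using the top and bottom boundary conditions. This gives $\div_x(hB^0)=0$ in $\omega$ and $hB^0\cdot\n=0$ on $\partial\omega$, hence $hB^0=\nabla^\perp u$ with $u\in H^1_0(\omega)$. The vertical curl $\curl_xB'_\varepsilon$ is bounded in $L^2$ and converges weakly to $\curl_xB^0=\div(\nabla u/h)$. Since $h\in W^{2,\infty}$ and $\omega$ is $\mathscr C^{1,1}$, elliptic regularity gives $u\in H^2(\omega)$. By weak lower semicontinuity,
$$
\liminf\varepsilon^{-1}\int|\curl\B_\varepsilon|^2\ge\int_\omega h\,(\div(\nabla u/h))^2 .
$$
Strong convergence of the denominators gives $\varepsilon^{-1}\int|\B_\varepsilon|^2\to\int_\omega|\nabla u|^2/h$. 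This yields the $\liminf$ inequality.

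\textbf{Main obstacle.} The delicate step is the uniform compactness. It requires checking that the convex Gaffney inequality applies with boundary terms of the right sign (this is where convexity of $\O_{\varepsilon,h}$, i.e.\ concavity of $h$, is used). It also requires tracking the anisotropic scaling carefully, so that strong $L^2$ convergence of $B'_\varepsilon$ holds in both $x$ and $y$ and no mass is lost. If the Gaffney route fails near the lateral boundary, the fallback is a Helmholtz decomposition in $\omega$ of the averaged field $\int_0^{h}B'_\varepsilon\,dy$.
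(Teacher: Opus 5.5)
Your proposal is correct and follows essentially the paper's route. The upper bound uses the same test field $(\nabla^\perp u/h,\,-z\div_x(\nabla^\perp u/h))$, and the lower bound uses the same anisotropic rescaling, the convex Gaffney inequality, vertical integration of the divergence constraint to obtain the no-flux averaged field $\int_0^h B'_\varepsilon$, the stream function, and weak lower semicontinuity of the vertical curl. The only minor difference is how you get $u\in H^2(\omega)$: you invoke elliptic regularity, whereas the paper reads it off directly. Since $\nabla u=-(hB^0)^\perp$, your own $H^1$ bound on $B^0$ already gives it, without needing the $\mathscr C^{1,1}$ regularity of $\partial\omega$.
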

This naturally leads to the question of minimising $\lambda_1^{\mathrm{weight}}$ in $\mathrm{Adm}(\omega)$:
\begin{theorem}\label{Th:AnaDR}
Let $h_0$ be any constant profile in $\mathrm{Adm}(\omega)$. Then, for any other constant $h_0'$, $\lambda_1^{\mathrm{weight}}(h_0)=\lambda_1^{\mathrm{weight}}(h_0')$ and, for any non-constant $h\in \mathrm{Adm}(\omega)$, 
$$\lambda_1^{\mathrm{weight}}(h)>\lambda_1^{\mathrm{weight}}(h_0).$$
\end{theorem}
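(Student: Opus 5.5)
The plan is to compare the weighted quotient \eqref{def:lambda-weight} with the first Dirichlet eigenvalue $\lambda_1^D(\omega)$ of the Laplacian.

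\emph{Constant profiles.} For $h\equiv h_0$ the weights cancel and the quotient becomes $\int_\omega(\Delta u)^2/\int_\omega|\nabla u|^2$ on $H^2(\omega)\cap H^1_0(\omega)$. This is independent of $h_0$, which gives the first assertion. Its infimum equals $\lambda_1^D(\omega)$:
\begin{itemize}
\item the bound is attained by the first Dirichlet eigenfunction, which lies in $H^2$ since $\omega$ is $\mathscr C^{1,1}$ and convex;
\item conversely, $\int|\nabla u|^2=-\int u\Delta u\le\|u\|_{L^2}\|\Delta u\|_{L^2}\le\lambda_1^D(\omega)^{-1/2}\|\nabla u\|_{L^2}\|\Delta u\|_{L^2}$.
\end{itemize}
So the claim reduces to proving that $\lambda_1^{\mathrm{weight}}(h)>\lambda_1^D(\omega)$ for every non-constant concave $h\in\mathrm{Adm}(\omega)$.

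\emph{Existence of a minimiser.} For general $h$, I would first show that the infimum in \eqref{def:lambda-weight} is attained. Since $\underline h\le h\le\overline h$ and $h\in W^{2,\infty}$, the numerator controls $\|\Delta u\|_{L^2}$ up to lower-order terms, and elliptic regularity on the convex $\mathscr C^{1,1}$ domain $\omega$ gives the $H^2$ bound. Compactness of $H^2\hookrightarrow H^1$ then yields a minimiser $u$. Set $w:=\div(\nabla u/h)$. Integration by parts gives $\int_\omega h^{-1}|\nabla u|^2=-\int_\omega uw$.

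\emph{Key inequality.} The target is $\int h w^2\ge\lambda_1^D\int h^{-1}|\nabla u|^2$. A plain Cauchy--Schwarz applied to $-\int uw$ only yields a weighted Poincaré constant, which is not good enough. Instead I would expand $hw^2=(\Delta u)^2/h-2\Delta u\,(\nabla h\cdot\nabla u)/h^2+(\nabla h\cdot\nabla u)^2/h^3$. I would then integrate the cross term by parts, rewriting $\Delta u\,\nabla u$ through the identity $\div(\nabla u\otimes\nabla u)-\tfrac12\nabla|\nabla u|^2=\Delta u\,\nabla u$. This turns the cross term into:
\begin{itemize}
\item terms involving $D^2h$, which have a favourable sign because $h$ is concave;
\item terms $D^2u:(\nabla u\otimes\nabla h)$, to be absorbed with the $(\nabla h\cdot\nabla u)^2/h^3$ term by completing a square;
\item a boundary term. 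On $\partial\omega$ we have $\nabla u=\partial_\n u\,\n$, so this term reduces to $(\partial_\n u)^2\,\nabla h\cdot\n/h^2$ plus curvature contributions. Here the concavity of $h$ (which controls $\nabla h\cdot\n$ against interior values) and the convexity of $\omega$ (Grisvard-type inequality $\int(\Delta u)^2\ge\int|D^2u|^2$) are the inputs I would use to sign it.
\end{itemize}
After these manipulations, the aim is an inequality of the form $\int hw^2\ge\int(\Delta u)^2/h+R(h,u)$ with $R\ge0$. It should then be combined with a weighted version of the constant-case argument, applied to $\int(\Delta u)^2/h$ against $\int|\nabla u|^2/h$.

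\emph{Expected obstacle.} The hardest step is this second comparison: the weight $1/h$ must be transferred between $(\Delta u)^2$ and $|\nabla u|^2$ without losing the sharp constant $\lambda_1^D$. If the direct computation fails, my fallback is a duality argument. One uses $\int h^{-1}|\nabla u|^2=\min\{\int h|G|^2:\div G=w\}$, tests the dual problem with $G=\nabla\varphi$ where $\Delta\varphi=w$ and $\varphi\in H^1_0$, and exploits the concavity of $h$ through a Bochner identity for $\varphi$.

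\emph{Strictness.} If equality $\lambda_1^{\mathrm{weight}}(h)=\lambda_1^D$ held, every discarded nonnegative term would vanish. In particular $\nabla h\cdot\nabla u=0$ a.e. and $u$ would be a Dirichlet eigenfunction. By unique continuation, $\nabla u\ne0$ on a dense open set, and the first eigenfunction has no interior critical points along almost every direction. Together these force $\nabla h\equiv0$, so $h$ is constant.
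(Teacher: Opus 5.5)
Your treatment of constant profiles is correct; it is Lemma~\ref{cor:constant-thickness}, with a Cauchy--Schwarz argument in place of the spectral expansion. The non-constant case, however, has a genuine gap: the chain you propose cannot be closed, because the inequalities it needs are false for general $u$, and general $u$ is what integration by parts plus sign arguments would deliver.

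\emph{The second link fails.} The inequality $\int_\omega(\Delta u)^2/h\geq\lambda_1^{\mathrm{Dir}}(\omega)\int_\omega|\nabla u|^2/h$ fails for every non-constant concave $h$. For the first Dirichlet eigenfunction $\varphi_1$, an integration by parts gives
\[
\int_\omega\frac{|\nabla\varphi_1|^2}{h}=\lambda_1^{\mathrm{Dir}}(\omega)\int_\omega\frac{\varphi_1^2}{h}+\int_\omega\varphi_1^2\left(\frac{|\nabla h|^2}{h^3}-\frac{\Delta h}{2h^2}\right).
\]
The last integral is strictly positive, so the weighted quotient at $\varphi_1$ lies strictly below $\lambda_1^{\mathrm{Dir}}(\omega)$.

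\emph{The first link fails.} For $u\in C_c^\infty(\omega)$, so that no boundary terms appear, your integration by parts yields
\[
\int_\omega hw^2-\int_\omega\frac{(\Delta u)^2}{h}=\int_\omega\frac{2D^2h(\nabla u,\nabla u)-\Delta h\,|\nabla u|^2}{h^2}+\int_\omega\frac{2|\nabla h|^2|\nabla u|^2-3(\nabla h\cdot\nabla u)^2}{h^3}.
\]
The Hessian part has no sign. For affine $h$ and $u$ oscillating rapidly along $\nabla h$, the whole expression is negative. So no remainder $R\geq0$ can come out of such manipulations.

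\emph{The fallback fails too.} The duality argument leads to $\int_\omega h(\Delta\varphi)^2\geq\lambda_1^{\mathrm{Dir}}(\omega)\int_\omega h|\nabla\varphi|^2$. This is false in general already in one dimension: take $h(x)=x+\delta$ on $(0,\pi)$ and $\varphi'=\cos x+a\cos 2x$ with $a,\delta>0$ small; then $\int h(\varphi'')^2<\int h(\varphi')^2$. Your strictness paragraph relies on the vanishing of discarded terms, so it has nothing to rest on.

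The missing idea is the ground-state substitution $u=\sqrt h\,w$. A direct computation gives $-\div(h^{-1}\nabla u)=h^{-1/2}H_hw$, where
\[
H_h:=-\Delta+q_h\ \text{(Dirichlet)},\qquad q_h:=\frac34\frac{|\nabla h|^2}{h^2}-\frac12\frac{\Delta h}{h}.
\]
It also gives $\int_\omega h^{-1}|\nabla u|^2=\int_\omega(|\nabla w|^2+q_hw^2)=\int_\omega(H_hw)w$. The quotient therefore becomes $\int_\omega|H_hw|^2/\int_\omega(H_hw)w$ over $w\in H^2(\omega)\cap H^1_0(\omega)$. The weight is never transferred between $(\Delta u)^2$ and $|\nabla u|^2$. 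It is absorbed into a potential, and $q_h\geq0$ precisely because $h$ is concave.

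Your own constant-case Cauchy--Schwarz argument, run with $H_h$ in place of $-\Delta$, then gives $\lambda_1^{\mathrm{weight}}(h)=\lambda_1(H_h)\geq\lambda_1^{\mathrm{Dir}}(\omega)$. The value $\lambda_1(H_h)$ is attained at the first eigenfunction $\psi_1>0$ of $H_h$. If equality holds, then $\int_\omega q_h\psi_1^2=0$, so $q_h\equiv0$. Since both terms of $q_h$ are nonnegative, this forces $\nabla h\equiv0$, i.e.\ $h$ is constant. This is the paper's argument. It needs no existence of a minimiser, no Grisvard-type inequality and no boundary analysis.
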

These theorems give credence to Conjecture \ref{Co:NoExistence}, or at least prove that, should a three-dimensional minimiser exist, it cannot be thin in one direction: otherwise, it would be reasonable to assume that this minimiser could be replaced with a domain of the type $\O_{\varepsilon,h}$, just as we can expect constant profiles $h$ to minimise $J_{\mathrm{Per}}(\O_{\varepsilon,h})$ in $\mathrm{Adm}(\omega)$. Should that be the case, we could then invoke Theorem \ref{Th:CylindricalCollapse} to conclude. However, excluding "fat" optimisers seems out of reach at the moment.

\subsubsection{Optimality conditions and non-existence of smooth optimisers}

Although the existence of an optimiser remains open, first-order optimality conditions impose a strong geometric obstruction on any sufficiently smooth three-dimensional local minimiser.

\begin{theorem}[Vanishing of the Gauss curvature on a boundary patch]\label{thm:open-parabolic-patch}
Let $\Omega^\ast\subset\mathbb R^3$ be a local minimiser of $J_{\mathrm{Per}}$ in $\mathcal C$. Assume that $\partial\Omega^\ast$ is of class $\mathscr C^3$, and let $H$ and $\kappa$ denote its mean curvature and Gauss curvature, respectively. If
$$
H>0
\qquad\text{on }\partial\Omega^\ast,
$$
then
$$
\operatorname{int}_{\partial\Omega^\ast}
\left\{x\in\partial\Omega^\ast:\kappa(x)=0\right\}
\neq\varnothing.
$$
\end{theorem}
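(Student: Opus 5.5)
We argue by contradiction: assume $\Omega^\ast$ is a $\mathscr C^3$ local minimiser with $H>0$ on $\partial\Omega^\ast$ and that the zero set of $\kappa$ has empty interior in $\partial\Omega^\ast$. Then $\{\kappa>0\}$ is open and dense in $\partial\Omega^\ast$, since convexity gives $\kappa\geq0$. On this set the boundary is strictly convex. Hence any normal perturbation $V\in\mathscr C^{2}_c(\{\kappa>0\})$, acting through the support function or a normal graph $x\mapsto x+tV(x)\n(x)$, keeps the domain convex for $|t|$ small and of either sign. Such perturbations are therefore two-sided admissible, and local minimality gives a genuine Euler--Lagrange \emph{equality} on $\{\kappa>0\}$, not merely an inequality.

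\textbf{Step 1: simplicity and shape derivative.} We first need the first eigenvalue to be differentiable. If it is multiple, we use the directional (one-sided) derivative, which is the minimum over the eigenspace of the quadratic form below. Two-sided admissibility then forces the form to vanish for every eigenfunction in the eigenspace, so we may work with one eigenfield. Using the electric formulation with $\E\times\n=0$, the classical Hadamard formula (cf. \cite{LambertiZaccaron2021}) reads
\[
\lambda'(\Omega^\ast)[V]=\int_{\partial\Omega^\ast}\bigl(|\curl\E|^2-\lambda|\E|^2\bigr)V\,d\sigma
=\int_{\partial\Omega^\ast}\bigl(|\B_T|^2-\lambda|E_n|^2\bigr)V\,d\sigma ,
\]
for $\|\E\|_{L^2}=1$, up to sign convention, where $\B=\curl\E$ is tangential and $\E=E_n\n$ on the boundary. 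We also use $\Per'[V]=\int 2H V$ (with $H$ the mean of the principal curvatures). The optimality condition becomes
\[
|\B_T|^2-\lambda|E_n|^2=-\frac{2\lambda}{\Per(\Omega^\ast)}H\quad\text{on }\{\kappa>0\}.
\]
By density and continuity ($\mathscr C^3$ boundary gives $\E$ up to the boundary in $\mathscr C^{1,\alpha}$), this identity holds on all of $\partial\Omega^\ast$.

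\textbf{Step 2: contradiction.} Integrate the identity over $\partial\Omega^\ast$ and test it against the conserved quantities. The natural tool is the Rellich-type identity obtained by multiplying the equations by $x\cdot\nabla$, which yields the scaling relation
\[
\int_{\partial\Omega^\ast}\bigl(|\B_T|^2-\lambda|E_n|^2\bigr)(x\cdot\n)\,d\sigma=-2\lambda ,
\]
consistent with the Minkowski formula $\int 2H(x\cdot\n)=2\Per$. This relation alone is only a consistency check. To obtain a contradiction we must exploit further structure: for instance, $\div\E=0$ together with $\E=E_n\n$ gives $\partial_\n E_n=-2HE_n$ on the boundary, and the tangential Maxwell equations relate $\B_T$ to derivatives of $E_n$. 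Near a point where $E_n$ vanishes, the right-hand side forces $|\B_T|^2=-2\lambda H/\Per<0$ because $H>0$, which is absurd. So it suffices to show that $E_n$, or more precisely $|\B_T|^2-\lambda E_n^2$, must become nonnegative somewhere.

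The plan is to find a boundary point where $E_n=0$. Since $\E\cdot\n$ has zero mean against suitable test fields (Green's formula with $\div\E=0$ gives $\int_{\partial\Omega}E_n\,d\sigma=0$), $E_n$ changes sign on the connected surface $\partial\Omega^\ast$, so its zero set is nonempty. At such a point the identity gives $|\B_T|^2=-2\lambda H/\Per<0$, which is a contradiction. Consequently, assuming $\{\kappa=0\}$ has empty interior is untenable.

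\textbf{Main obstacle.} I expect the main obstacle to be the rigorous derivation of a two-sided optimality condition within the convexity constraint, because $\{\kappa>0\}$ may be merely dense. Justifying the density extension requires boundary regularity of $\E$ up to $\mathscr C^{1}$, which the $\mathscr C^3$ hypothesis should provide via elliptic regularity of the Maxwell system. The handling of a possibly multiple eigenvalue also needs care.
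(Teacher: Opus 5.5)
There is a genuine gap, and it comes from the sign of the Hadamard formula, which you leave ``up to sign convention'' even though your contradiction depends entirely on it. The correct formula, as in the paper, is $\lambda'[V]=\int_{\partial\Omega^\ast}\bigl(\lambda|\E|^2-|\curl\E|^2\bigr)V\cdot\n$: as for the Dirichlet Laplacian, pushing the wall outwards where the tangential field $\curl\E$ is strong lowers $\lambda$.

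You can check this on the dilation $V=x$, for which $\lambda'=-2\lambda$. Set $\B:=\curl\E$ with $\|\E\|_{L^2}=1$. The tensor $T=\lambda\,\E\otimes\E+\B\otimes\B-\tfrac12(\lambda|\E|^2+|\B|^2)I$ is divergence free. On $\partial\Omega^\ast$ it satisfies $T\n=\tfrac12(\lambda E_n^2-|\B|^2)\n$, and $\operatorname{tr}T=-\tfrac12(\lambda|\E|^2+|\B|^2)$. Hence $\int_{\partial\Omega^\ast}(|\B|^2-\lambda E_n^2)(x\cdot\n)=+2\lambda$, not $-2\lambda$. Your Rellich relation therefore carries the same sign error as your Hadamard formula, which is why the Minkowski consistency check could not detect it.

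With your convention for $H$, the correct Euler--Lagrange equation is
\[
|\curl\E|^2-\lambda E_n^2=\frac{2\lambda}{\Per(\Omega^\ast)}\,H>0\qquad\text{on }\partial\Omega^\ast .
\]
At a zero of $E_n$ it only says $|\curl\E|^2=2\lambda H/\Per(\Omega^\ast)>0$, which is no contradiction. So the (true) fact that $\int_{\partial\Omega^\ast}E_n=0$ leads nowhere.

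The missing idea is to use the tangential field rather than the normal one. Since $(\curl\E)\cdot\n=0$ on $\partial\Omega^\ast$, the correct identity gives $|\curl\E|^2\geq 2\lambda H/\Per(\Omega^\ast)>0$ everywhere on the boundary. Thus $\curl\E$ is a continuous, nowhere-vanishing tangent vector field on $\partial\Omega^\ast$, which is homeomorphic to $\mathbb S^2$ by convexity. This contradicts the hairy ball theorem, and it is exactly the paper's conclusion, written there with $\widehat{\B}=\lambda^{-1/2}\curl\E$.

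The rest of your set-up matches the paper and suffices once the sign and this topological step are fixed. That includes two-sided admissibility on the dense open set $\{\kappa>0\}$, extension of the identity by continuity, and, for a multiple eigenvalue, the observation that two-sided variations force the shape-derivative matrix to be scalar. As you anticipated, the diagonal entry for a single eigenfield already yields the nonvanishing tangent field. So the paper's separate simplicity argument is not actually needed for this theorem.
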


The proof follows the strategy introduced in \cite{zbMATH07927413}, itself based on ideas from \cite{zbMATH07697301}. An immediate consequence is the following obstruction.

\begin{corollary}[Nonexistence of $\mathscr C^{3,+}$ minimisers]\label{thm:main-nonexistence}
Problem \eqref{Eq:PvMaxwell} admits no minimiser $\Omega^\ast\in\mathcal C$ whose boundary is of class $\mathscr C^3$ and has two strictly positive principal curvatures at every point.
\end{corollary}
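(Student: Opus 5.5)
The plan is to deduce the corollary directly from Theorem~\ref{thm:open-parabolic-patch}. Suppose, by contradiction, that $\Omega^\ast\in\mathcal C$ is a minimiser of \eqref{Eq:PvMaxwell}, that $\partial\Omega^\ast$ is of class $\mathscr C^3$, and that both principal curvatures $\kappa_1,\kappa_2$ are strictly positive at every point of $\partial\Omega^\ast$.

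The first step is to check that $\Omega^\ast$ satisfies the hypotheses of Theorem~\ref{thm:open-parabolic-patch}. A global minimiser in $\mathcal C$ is in particular a local minimiser of $J_{\mathrm{Per}}$ in $\mathcal C$, for whatever topology the local notion refers to (Hausdorff distance or small $\mathscr C^3$ normal perturbations preserving convexity). The boundary is $\mathscr C^3$ by assumption. For the sign condition, the mean curvature is $H=\kappa_1+\kappa_2$ (or half of it, depending on the normalisation); in either case $H>0$ everywhere on $\partial\Omega^\ast$.

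The theorem then produces a relatively open, nonempty subset $U\subset\partial\Omega^\ast$ on which the Gauss curvature $\kappa=\kappa_1\kappa_2$ vanishes. Picking any $x\in U$ gives $\kappa_1(x)\kappa_2(x)=0$, so one of the principal curvatures vanishes at $x$. This contradicts the assumption that both are strictly positive. Note that we only need the set $\{\kappa=0\}$ to be nonempty, not that it has nonempty interior.

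The only point needing care is the hypothesis-matching in the first step: the notion of local minimiser used in Theorem~\ref{thm:open-parabolic-patch} must be implied by global minimality in $\mathcal C$, and the sign convention for $H$ (with respect to the outer normal) must be such that convexity together with positive principal curvatures gives $H>0$. Both are immediate, so the argument presents no real obstacle.
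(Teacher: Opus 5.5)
Your proposal is correct and matches the paper's proof: global minimality implies local minimality, strictly positive principal curvatures give $H>0$ (and $\kappa>0$), and Theorem~\ref{thm:open-parabolic-patch} then produces points where $\kappa=0$, a contradiction. Your remark that nonemptiness of $\{\kappa=0\}$ already suffices is accurate.
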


Such nonexistence results for smooth optimisers also occur in related spectral shape optimisation problems; see, for instance, \cite{zbMATH07697301}.

\subsection{Plan of the paper}

Section~\ref{sec:positive-infimum} is devoted to the proof of Theorem~\ref{thm:positive-infimum}. After normalizing a minimising sequence by its diameter, we extract a Hausdorff-convergent subsequence and distinguish three cases according to the dimension of the limiting convex set. We prove a uniform positive lower bound when the limit has nonempty interior, obtain a positive lower bound in the case of a planar limit, and show that collapse onto a segment forces the functional to diverge.

Section~\ref{sec:planar-collapse} gives further evidence that minimising sequences should collapse onto a two-dimensional set. We first compute the asymptotic behaviour of the functional for thin cylindrical domains. 
We then consider graph-like thin convex domains with uniformly positive rescaled thickness, derive the corresponding dimensionally reduced eigenvalue problem, and study its optimisation with respect to the thickness profile.

Section~\ref{sec:smooth-obstruction} concerns hypothetical smooth three-dimensional minimisers. We derive first-order optimality conditions on strictly convex boundary patches, including the case of a multiple first Maxwell eigenvalue. These conditions yield a simplicity result under a positive-mean-curvature assumption and, combined with a topological argument, imply that the Gauss curvature must vanish on a nonempty relatively open subset of the boundary. This excludes minimisers with a $\mathscr C^{3,+}$ boundary.

\subsection{Bibliographical references}\label{Se:Biblio}
Although spectral optimisation for scalar operators is now firmly established (we refer to \cite{henrot2018shape} and the references therein), the situation is far more complex for vectorial shape optimisation; as alluded to, a major difficulty, both when it comes to the existence and characterisation of optimal shapes, is the lack of a comparison principle and the lack of simplicity of the lowest eigenvalue. This means that both the type of results and the tools need to be finely tuned to each specific case. We split the bibliography accordingly.

\paragraph{Curl and Stokes eigenvalue problems.} 
As far as we are aware, the vectorial spectral optimisation problem that received the most attention is the minimisation of $\curl$ eigenvalues; in the wake of works by physicists \cite{Cantarella_2000,Cantarella_2000_Bis}, this problem was investigated in depth by Enciso, Gerner \& Peralta-Salas \cite{enciso2022optimal,zbMATH07697301,Gerner_2023}. In their works, they established, for instance, qualitative results like the non-existence of smooth optimisers within specific classes of domains (\emph{e.g.} convex, symmetric etc), as well as qualitative properties of an optimiser, should it exist. Typically, if a smooth optimiser under a volume constraint exists, its lowest eigenvalue is simple. This last property is fundamentally tied to the fact that $\curl$ eigenfunctions can be reinterpreted as Beltrami fields. Notably, although, for the volume constrained problem, a positive lower bound was established, the existence of an optimiser is still open in the absence of regularisation of the boundary. Tied to this problem is the volume constrained minimisation of the Stokes eigenvalue, which was studied in \cite{zbMATH07927413}, which established both the existence of an optimiser, and some qualitative properties, first and foremost the simplicity of the eigenvalue at a smooth optimiser. To establish this simplicity, a crucial point is the incompressibility constraint which, likewise, provides some rigidity on the eigenfunctions' behaviour on the boundary.

\paragraph{Curl curl and Maxwell eigenvalue problems.} 
Minimising the lowest Maxwell and $\curl\curl$ eigenvalues has attracted a lot of attention in recent years, in particular in the works of  Krej\v ci\v r\'ik, Lamberti, Provenzano, Sempio \& Zaccaron \cite{krejvcivrik2025note,LambertiZaccaron2021,LambertiZaccaron2023Stability,arXiv:2607.26983}. The articles \cite{LambertiZaccaron2021,LambertiZaccaron2023Stability} deal with sensitivity analysis of the Maxwell and $\curl\curl$ eigenvalue problems, and provide the rigorous foundation for shape derivative analysis, as well as the constructions of sequences proving that both volume and perimeter constrained optimisation are ill-posed. Of particular importance to us here is the very recent \cite{arXiv:2607.26983}, which we were made aware of in the last stages of preparation of this paper, and which deals specifically with the optimisation of the lowest Maxwell eigenvalues. On the one hand, the authors provide insight into the optimisation of the lowest eigenvalue (and, in fact, of symmetric functions of the lowest eigenvalues) of the Maxwell operator on cuboids. On the other, they extend the construction of \cite{krejvcivrik2025note}, which yielded non-existence of an optimiser, to obtain the non-existence of smooth, simply connected local optimisers--their approach is constructive (and done at the ball, but, as they explain, can be generalised to any smooth domain) and provides competitors, which however fail to be convex domains.

\subsection{Conclusion and perspectives}\label{Se:Perspectives}

We have proved that the infimum of \eqref{Eq:PvMaxwell} is positive, excluded collapse onto a segment, and obtained further evidence in favour of planar collapse through the analysis of cylindrical and graph-like thin domains. We have also excluded minimisers with a $\mathscr C^{3,+}$ boundary.

Two main questions remain. The first is to extend this non-existence result to larger classes of convex domains, in particular by allowing vanishing principal curvatures and, ultimately, weaker boundary regularity. The second is to derive a dimension-reduction result for general convex domains collapsing onto a planar convex set. Theorem~\ref{Th:CvgDR} assumes a fixed graph-like geometry and a rescaled thickness bounded away from zero; removing this latter assumption is essential for treating general planar collapse.

\section{Existence of a positive infimum: proof of Theorem \ref{thm:positive-infimum}}\label{sec:positive-infimum}

\subsection{Possible behaviours of minimising sequences: proof of Theorem \ref{thm:minimising-sequence-alternative}}
The goal of this section is to prove Theorem \ref{thm:minimising-sequence-alternative} in order to apply it to minimising sequences.
\begin{proof}[Proof of Theorem \ref{thm:minimising-sequence-alternative}] Let $\{\O_k\}_{k\in \N}$ be a minimising sequence for \eqref{Eq:PvMaxwell}. Up to replacing $\O_k$ with 
$$\O_k':=\frac{\O_k-a_k}{\mathrm{diam}(\O_k)}$$ for some $a_k\in\Omega_k$, and noticing that by scale and translation invariance of $J_{\mathrm{Per}}$ we have
$$J_{\mathrm{Per}}(\O_k')=J_{\mathrm{Per}}(\O_k),$$ we can assume that
\begin{equation}\label{Eq:Diam}\forall k \in \N,\, \mathrm{diam}(\O_k)= 1.\end{equation} Furthermore, up to translations, we can assume that 
$$\forall k \in \N,\, \O_k\subset\mathbb B(0;1).$$ From the Blaschke selection theorem \cite{Schneider2014}, there exists a compact, convex set $K\subset \R^3$ such that
$$\O_k\underset{k\to \infty}\rightarrow K\text{ in the Hausdorff distance $d_H$.}$$Passing to the limit in \eqref{Eq:Diam} we obtain 
$$\mathrm{diam}(K)=1.$$ Finally, as $K$ is convex and non-empty, $\mathrm{dim}(K)\in \{1,2,3\}$. If $\mathrm{dim}(K)=1$, $K$ is a segment; if $\dim K=2$, then $K$ is contained in an affine plane and has nonempty relative interior in that plane.
\end{proof}
\emph{We now fix a minimising sequence $\{\O_k\}_{k\in \N}$ for $J_{\mathrm{Per}}(\O)$ satisfying 
$$\forall k \in \N,\, \mathrm{diam}(\O_k)=1,\, \O_k\subset \mathbb B(0;1)$$ and we let $K$ denote one of its closure points provided by Theorem \ref{thm:minimising-sequence-alternative}. }

To prove Theorem \ref{thm:positive-infimum} we deal with the following cases:
\begin{enumerate}
\item\textbf{Case I: no-collapse }{$\mathrm{dim}(K)=3$.} In this case, we show that $\underset{k\to \infty}{\lim\inf}J_{\mathrm{Per}}(\O_k)>0$. This is handled in Section \ref{Se:NoCollapse}.
\item\textbf{Case II: two-dimensional collapse} $\mathrm{dim}(K)=2$. This is handled in Section \ref{Se:2dCollapse}.
\item\textbf{Case III: one-dimensional collapse} $\mathrm{dim}(K)=1$. We prove in Section \ref{Se:1dCollapse} that this can not happen.
\end{enumerate}

\subsection{Analysis of Case I: the no-collapse scenario}\label{Se:NoCollapse}
The goal of this section is to show the following proposition:
\begin{proposition}\label{Pr:NoCollapse} With the same notations, assume that  $\mathrm{dim}(K)=3$. Then 
\begin{equation}\label{Eq:NoCollapseLB}
\underset{k\to\infty}{\lim\inf}J_{\mathrm{Per}}(\O_k)>0.\end{equation} \end{proposition}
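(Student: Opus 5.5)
Since $\dim K=3$, the limit $K$ contains a ball: there exist $x_0\in\R^3$ and $r>0$ with $\mathbb B(x_0;2r)\subset K$. By Hausdorff convergence of convex sets, for $k$ large we get $\mathbb B(x_0;r)\subset\O_k$. Indeed, if a point of $\mathbb B(x_0;r)$ were not in $\overline{\O_k}$, then a supporting half-space of $\O_k$ through it would exclude a point of $K$ at distance at least $r$ from $\overline{\O_k}$, contradicting $d_H(\overline{\O_k},K)\to0$. At the same time $\O_k\subset\mathbb B(0;1)$. So for large $k$ the sets $\O_k$ are convex with inradius bounded below by $r$ and outer radius bounded by $1$. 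The perimeter is easy to control: by monotonicity of perimeter under inclusion of convex sets, $\Per(\O_k)\geq \Per(\mathbb B(x_0;r))=4\pi r^2$. It therefore suffices to prove
$$\liminf_{k\to\infty}\lambda_1^{\mathrm{Max}}(\O_k)>0,$$
and in fact that $\lambda_1^{\mathrm{Max}}$ is bounded below by a positive constant on convex sets contained in $\mathbb B(0;1)$.

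For this lower bound I would use the electric formulation \eqref{Eq:MaxElec} together with the Gaffney/Friedrichs-type inequality valid on convex domains. For $\Omega$ convex and $\E\in\mathbf X_N(\Omega)$, \cite{AmroucheBernardiDaugeGirault1998} gives $\E\in \mathbf H^1(\Omega)$ and
$$\int_\Omega|\nabla\E|^2\leq\int_\Omega|\curl\E|^2+\int_\Omega|\div\E|^2.$$
The boundary term arising in the integration by parts is a curvature term with the favourable sign on convex domains; this is obtained by smooth convex approximation. For $\E\in\mathcal S(\Omega)$ this yields $\|\nabla\E\|_{L^2}^2\leq\|\curl\E\|_{L^2}^2$.

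It remains to obtain a Poincaré-type inequality $\|\E\|_{L^2}\leq C\|\nabla\E\|_{L^2}$ with $C$ depending only on the diameter. The tangential condition $\E\times\n=0$ alone does not kill constants in an obvious way, so I would argue as follows. Since $\E$ is divergence-free with zero tangential trace, $\int_\Omega\E=0$. To see this, write $\int_\Omega E_i=\int_\Omega \E\cdot\nabla x_i=\langle \E\cdot\n,x_i\rangle_{\partial\Omega}$. This need not vanish directly, so instead I use $\E\times\n=0$: for a constant vector $\mathbf c$, $\int_\Omega \E\cdot\curl(\mathbf c\times x/2)$ equals $\int_\Omega\curl\E\cdot(\mathbf c\times x/2)$, and $\curl(\mathbf c\times x/2)=\mathbf c$. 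Hence
$$\Big|\int_\Omega \E\Big|\leq \tfrac12\,|\Omega|^{1/2}\,\|\curl\E\|_{L^2}.$$
The Poincaré--Wirtinger inequality on convex sets (Payne--Weinberger, with constant $\mathrm{diam}/\pi$) then gives
$$\|\E\|_{L^2}\leq \frac{1}{\pi}\|\nabla \E\|_{L^2}+|\Omega|^{-1/2}\Big|\int_\Omega\E\Big|\leq\Big(\frac1\pi+\frac12\Big)\|\curl\E\|_{L^2}.$$
Consequently $\lambda_1^{\mathrm{Max}}(\O_k)\geq(1/\pi+1/2)^{-2}$, and therefore
$$J_{\mathrm{Per}}(\O_k)\geq 4\pi r^2(1/\pi+1/2)^{-2},$$
which gives \eqref{Eq:NoCollapseLB}. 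In fact this argument shows that only the perimeter lower bound uses $\dim K=3$.

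The main obstacle is justifying the Gaffney inequality with constant one on merely convex, Lipschitz domains, since it needs $\mathbf H^1$ regularity and a limiting argument from smooth convex approximations. I would cite \cite[Thm.~2.17]{AmroucheBernardiDaugeGirault1998} and its proof for this.
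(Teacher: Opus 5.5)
Your proof is correct, and the perimeter half is the same as the paper's: an inscribed ball for large $k$, then monotonicity of perimeter among convex sets. Your supporting half-space argument simply makes the Hausdorff-convergence step explicit.

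The eigenvalue half takes a different route. The paper works in the magnetic formulation, where $\div\B=0$ and $\B\cdot\n=0$ make every component of $\B$ mean-free. Combining the convex Gaffney inequality $\|\nabla\B\|_{L^2}\leq\|\curl\B\|_{L^2}$ with Payne--Weinberger then gives $\lambda_1^{\mathrm{Max}}(\O)\geq\mu_1^{\mathrm{Neu}}(\O)\geq\pi^2/\operatorname{diam}(\O)^2$ (Lemma~\ref{lem:maxwell-ge-neumann} and Corollary~\ref{cor:maxwell-diameter-lower-bound}). You stay in the electric formulation, where the means need not vanish. You control them by duality with the linear field $\mathbf c\times x/2$, whose curl is $\mathbf c$; the step is rigorous because $\int_\O\E\cdot\curl\varphi=\int_\O\curl\E\cdot\varphi$ holds for $\E\in\mathbf H_0(\curl;\O)$ and smooth $\varphi$, by density.

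Both routes rest on the same convex-domain Gaffney inequality, in its tangential version for you and its normal version for the paper. So the obstacle you flag is shared, not an extra cost of your approach.

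Your route avoids the electric--magnetic correspondence, but it yields the weaker constant $(1/\pi+1/2)^{-2}$ and uses the circumradius normalisation $\O_k\subset\mathbb B(0;1)$. Centering $x$ at a point of $\O$ would instead give a scale-invariant bound of the form $c/\operatorname{diam}(\O)^2$. The paper's route gives the sharper and cleaner bound $\pi^2/\operatorname{diam}(\O)^2$. Its zero-mean property of magnetic components is also reused later, in the proof of Proposition~\ref{le:maxwell-box-lower}.
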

The proof of this proposition relies on a comparison result:
for a bounded Lipschitz domain $\Omega\subset\mathbb R^3$, let $\mu_1^{\mathrm{Neu}}(\Omega)$ denote the first nonzero Neumann eigenvalue of the scalar Laplacian, namely
$$
\mu_1^{\mathrm{Neu}}(\Omega):=\min_{\substack{\varphi\in H^1(\Omega)\setminus\{0\}\\ \int_\Omega \varphi\,dx=0}}
\frac{\int_\Omega |\nabla\varphi|^2\,dx}{\int_\Omega |\varphi|^2\,dx}.
$$

\begin{lemma}\label{lem:maxwell-ge-neumann}
Let $\Omega\subset\mathbb R^3$ be a bounded convex domain.  Then $\lambda_1^{\mathrm{Max}}(\Omega)\geq \mu_1^{\mathrm{Neu}}(\Omega)$.
\end{lemma}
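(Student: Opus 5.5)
The plan is to work with the magnetic formulation \eqref{eq:CF-Maxwell-magnetic} rather than the electric one. The reason is that a magnetic eigenfield $\B\in\mathcal S_N(\Omega)$ carries exactly the two pieces of information that turn each of its Cartesian components into an admissible Neumann test function: it is divergence free and satisfies $\B\cdot\n=0$ on $\partial\Omega$. The argument then has two ingredients. The first is a zero-mean property of the components. The second is a Gaffney-type inequality bounding $\|\nabla\B\|_{L^2}$ by $\|\curl\B\|_{L^2}$, which holds on convex domains.

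\textbf{Step 1 (zero mean).} Let $\B\in\mathcal S_N(\Omega)$ and $i\in\{1,2,3\}$. Since $B_i=\B\cdot\nabla x_i$, the Green formula in $\mathbf H(\div;\Omega)$, tested against the $H^1$ function $x_i$, gives
$$
\int_\Omega B_i\,dx=-\int_\Omega x_i\,\div\B\,dx+\left\langle \B\cdot\n,\,x_i|_{\partial\Omega}\right\rangle=0 .
$$
Hence, as soon as $B_i\in H^1(\Omega)$, it is admissible in the Rayleigh quotient defining $\mu_1^{\mathrm{Neu}}(\Omega)$. This yields
$$
\int_\Omega|\nabla B_i|^2\,dx\geq \mu_1^{\mathrm{Neu}}(\Omega)\int_\Omega |B_i|^2\,dx .
$$
Summing over $i$ gives $\int_\Omega|\nabla\B|^2\,dx\geq\mu_1^{\mathrm{Neu}}(\Omega)\int_\Omega|\B|^2\,dx$.

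\textbf{Step 2 (Gaffney inequality on convex sets).} I will show that for $\B\in\mathbf H(\curl;\Omega)\cap\mathbf H(\div;\Omega)$ with $\B\cdot\n=0$, one has $\B\in \mathbf H^1(\Omega)$ and
$$
\int_\Omega|\nabla\B|^2\,dx\leq\int_\Omega|\curl\B|^2\,dx+\int_\Omega|\div\B|^2\,dx .
$$
For smooth domains this follows from the classical integration by parts identity (Grisvard, Amrouche--Bernardi--Dauge--Girault)
$$
\int_\Omega|\curl\B|^2+|\div\B|^2-|\nabla\B|^2\,dx=\int_{\partial\Omega}\mathrm{II}(\B_T,\B_T)\,d\sigma,
$$
where $\mathrm{II}$ is the second fundamental form, with the sign convention making it nonnegative on convex sets. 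For a general convex $\Omega$, I will approximate by smooth convex domains $\Omega_n\uparrow\Omega$ and pass to the limit, exactly as in \cite[Thm.~2.17]{AmroucheBernardiDaugeGirault1998}, which already gives $\mathbf X_T(\Omega)\subset\mathbf H^1(\Omega)$ with this bound.

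Applying Step 2 with $\div\B=0$ and combining with Step 1, every $\B\in\mathcal S_N(\Omega)$ satisfies
$$
\int_\Omega|\curl\B|^2\,dx\geq\mu_1^{\mathrm{Neu}}(\Omega)\int_\Omega|\B|^2\,dx .
$$
Minimising the left-hand quotient over $\mathcal S_N(\Omega)$ and using \eqref{eq:CF-Maxwell-magnetic} then gives $\lambda_1^{\mathrm{Max}}(\Omega)\geq\mu_1^{\mathrm{Neu}}(\Omega)$.

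\textbf{Main obstacle.} The only delicate point is Step 2 for nonsmooth convex domains: both the $\mathbf H^1$ regularity and the sign of the boundary term have to be justified when $\partial\Omega$ is only Lipschitz. The approximation argument is as follows:
\begin{itemize}
\item extend the field $\B$ suitably, or solve a div--curl problem on $\Omega_n$ with data $\curl\B$ and normal trace zero;
\item use the smooth convex inequality, whose right-hand side is uniformly bounded in $n$, to obtain uniform $\mathbf H^1$ bounds;
\item conclude by weak lower semicontinuity.
\end{itemize}
Since this is precisely the content of the cited theorem, I will invoke it rather than reprove it.
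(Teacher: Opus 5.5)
Your proposal is correct and follows the paper's proof. Both use the zero mean of each component $B_i=\B\cdot\nabla x_i$ (from $\div\B=0$ and $\B\cdot\n=0$), the Neumann Rayleigh quotient applied componentwise, and the Maxwell--Gaffney inequality $\int_\Omega|\nabla\B|^2\leq\int_\Omega|\curl\B|^2+\int_\Omega|\div\B|^2$ on convex domains, combined with the magnetic characterisation \eqref{eq:CF-Maxwell-magnetic}; like you, the paper simply quotes the Gaffney inequality from \cite{AmroucheBernardiDaugeGirault1998} (Lemma~2.11 there) rather than reproving it.
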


\begin{proof}
Fix $\B\in\mathcal S_N(\Omega)\setminus\{0\}$. 
Recall the  Maxwell--Gaffney estimate (see \cite[Lemma 2.11]{AmroucheBernardiDaugeGirault1998}): as $\Omega$ is a bounded convex Lipschitz domain, for any $\U\in \mathbf H(\curl;\Omega)\cap \mathbf H(\div;\Omega)$ satisfying $\U\cdot\n=0$ on $\partial\Omega$, there holds $\U\in H^1(\Omega;\mathbb R^3)$ and, more precisely
$$
\int_\Omega |\nabla \U|^2\leq \int_\Omega |\curl \U|^2+\int_\Omega |\div \U|^2.
$$
In particular, for any $\B\in\mathcal S_N(\Omega)$, $\B\in H^1(\Omega;\mathbb R^3)$ and
\begin{equation}\label{eq:1006}
\int_\Omega |\nabla \B|^2 \leq \int_\Omega |\curl \B|^2.
\end{equation}Let $i\in \{1,2,3\}$ and $x_i$ denote the projection on the $i$-th component. Observe that 
$$\int_\O B_i=\int_\O \B\cdot\nabla x_i=-\int_\O \div(\B)x_i=0.$$ Thus we obtain, for any $i\in \{1,2,3\}$, 
$$\mu_1^{\mathrm{Neu}}(\Omega) \int_\O B_i^2\leq \int_\O |\nabla B_i|^2$$ whence, using \eqref{eq:1006},
\begin{equation}\label{eq:1007}
\mu_1^{\mathrm{Neu}}(\Omega)\int_\Omega |\B|^2 \leq \int_\Omega |\nabla \B|^2\leq \int_\O |\curl(\B)|^2.
\end{equation} The conclusion follows.
\end{proof}As a consequence of the Payne--Weinberger inequality \cite{payne1960optimal} we deduce the following:
\begin{corollary}[Diameter lower bound]\label{cor:maxwell-diameter-lower-bound}
Let $\Omega\subset\mathbb R^3$ be a bounded convex domain. Then
$$
\lambda_1^{\mathrm{Max}}(\Omega)\geq \frac{\pi^2}{\operatorname{diam}(\Omega)^2}.
$$
\end{corollary}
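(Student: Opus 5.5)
The corollary should follow by chaining Lemma~\ref{lem:maxwell-ge-neumann} with the Payne--Weinberger inequality \cite{payne1960optimal}. That inequality says that for every bounded convex domain $\Omega\subset\mathbb R^n$, in any dimension $n\geq 2$ and in particular for $n=3$,
$$
\mu_1^{\mathrm{Neu}}(\Omega)\geq \frac{\pi^2}{\operatorname{diam}(\Omega)^2}.
$$
Combining the two bounds gives
$$
\lambda_1^{\mathrm{Max}}(\Omega)\geq\mu_1^{\mathrm{Neu}}(\Omega)\geq \pi^2\operatorname{diam}(\Omega)^{-2}.
$$

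A few routine points need checking.

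\emph{Hypotheses of the lemma.} Any bounded convex open set with nonempty interior is automatically Lipschitz. So the Maxwell--Gaffney estimate and the compact embedding used in Lemma~\ref{lem:maxwell-ge-neumann} are available, and $\lambda_1^{\mathrm{Max}}(\Omega)$ is well defined as a minimum.

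\emph{Form of Payne--Weinberger.} The original paper states the bound with the constant $\pi^2/d^2$, where $d$ is the diameter, and this is valid in every dimension for convex domains. The sharp constant was later completed by Bebendorf. I would cite it in exactly this form; no reduction to a lower-dimensional problem is needed.

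\emph{Scaling consistency.} Both sides scale like $t^{-2}$ under $\Omega\mapsto t\Omega$, so the inequality is consistent.

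No real obstacle is expected: all the analytic work sits in Lemma~\ref{lem:maxwell-ge-neumann}, whose proof uses convexity through the Gaffney inequality without boundary curvature term and uses the zero-mean property of the components $B_i$. The only point to state explicitly is that the same convexity hypothesis serves both ingredients, so the corollary holds for every $\Omega\in\mathcal C$.
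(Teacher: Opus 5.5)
Your proposal is correct and is exactly the paper's argument: the paper obtains the corollary in one line by chaining Lemma~\ref{lem:maxwell-ge-neumann}, i.e.\ $\lambda_1^{\mathrm{Max}}(\Omega)\geq\mu_1^{\mathrm{Neu}}(\Omega)$, with the Payne--Weinberger bound $\mu_1^{\mathrm{Neu}}(\Omega)\geq\pi^2/\operatorname{diam}(\Omega)^2$ for convex domains. Your side checks are left implicit in the paper and are fine; they are that convex domains are Lipschitz, so the Gaffney estimate and the compact embedding apply.
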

We can finally prove Proposition \ref{Pr:NoCollapse}.
\begin{proof}[Proof of Proposition \ref{Pr:NoCollapse}]
As $\mathrm{dim}(K)=3$ and $K$ is convex, $K$ contains an open ball $\mathbb B(x_0;r)$.
Since $K$ is a full-dimensional convex body and $\overline{\mathbb B(x_0;r)}\subset K$, the compact set $\overline{\mathbb B(x_0;\frac{r}2)}$ is contained in $\operatorname{int}K$. By Hausdorff convergence, $\overline{\mathbb B(x_0;\frac{r}2)}\subset \O_k$ for any $k$ large enough, whence, by monotonicity of the perimeter with respect to inclusion for convex bodies, 
\begin{equation}\label{Eq:LBPer}
\Per(\Omega_k)\geq 4\pi\left(\frac r2\right)^2.
\end{equation}
From Corollary~\ref{cor:maxwell-diameter-lower-bound} and the fact that  $\operatorname{diam}(\Omega_k)=1$, we also have $\lambda_1^{\mathrm{Max}}(\Omega_k)\geq \pi^2$. Combining this with \eqref{Eq:LBPer} gives \eqref{Eq:NoCollapseLB}. \end{proof}

\subsection{Preliminary material for Cases I and II}
 Both cases I and II rely on the following comparison result:
\begin{proposition}\label{le:maxwell-box-lower} There exists a universal constant $c_\Box>0$ such that the following holds: for any bounded, convex $\O$ and any ellipsoid defined by its three semi-axes $(D_1(\O),D_2(\O),D_3(\O))$ containing $\O$, where, without loss of generality, 
$$D_1(\O)\leq D_2(\O)\leq D_3(\O),$$ there holds  
\begin{equation}\label{Eq:Savo}
\lambda_1^{\mathrm{Max}}(\O)\geq \frac{c_\Box}{D_2(\O)^2}.
\end{equation}
\end{proposition}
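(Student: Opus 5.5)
The plan is to deduce \eqref{Eq:Savo} from a lower bound for the Hodge Laplacian on $1$-forms on convex domains, in the spirit of Savo's estimates. The link with the Maxwell problem goes through the magnetic formulation \eqref{eq:CF-Maxwell-magnetic}. Identify $\B\in\mathcal S_N(\Omega)$ with a $1$-form $\beta$. The conditions $\div\B=0$ and $\B\cdot\n=0$ say that $\beta$ is co-closed and satisfies the tangential part of the \emph{absolute} boundary conditions. The natural condition $(\curl\B)\times\n=0$ supplies the other half.

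\textbf{Step 1 (reduction to the Hodge spectrum).} By Lemma~\ref{lem:maxwell-ge-neumann}'s setting, $\lambda_1^{\mathrm{Max}}(\Omega)$ is the bottom of the Rayleigh quotient $\int|d\beta|^2/\int|\beta|^2$ over co-closed $1$-forms with absolute boundary conditions. Since $\Omega$ is convex, hence contractible, there are no nonzero harmonic $1$-fields with absolute conditions. Hence $\lambda_1^{\mathrm{Max}}(\Omega)\ge\lambda_{1,1}^{\mathrm{abs}}(\Omega)$, the first eigenvalue of the Hodge Laplacian on $1$-forms with absolute boundary conditions. So it suffices to show
\[
\lambda_{1,1}^{\mathrm{abs}}(\Omega)\ge \frac{c}{D_2(\Omega)^2}.
\]

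\textbf{Step 2 (Bochner--Reilly with convexity).} For such forms the Reilly/Gaffney identity reads
\[
\int_\Omega|\nabla\B|^2+\int_{\partial\Omega}\mathrm{II}(\B,\B)=\int_\Omega|\curl\B|^2+\int_\Omega|\div\B|^2 .
\]
Convexity gives $\mathrm{II}\ge0$, which is the estimate \eqref{eq:1006} already used. By Kato's inequality, $f:=|\B|$ then satisfies $\int|\nabla f|^2\le\int|\curl\B|^2$. This is only useful together with a Poincaré-type inequality for $f$ or for suitable components of $\B$.

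\textbf{Step 3 (the geometric Poincaré inequality, the main obstacle).} The idea is that a $1$-form tangent to the boundary "sees" only two dimensions. Work in the frame $(e_1,e_2,e_3)$ of the ellipsoid axes, and slice $\Omega$ by the planes $\{x\cdot e_3=t\}$. Each slice $\Omega_t$ is a planar convex set contained in an ellipse of semi-axes $D_1,D_2$, hence $\mathrm{diam}(\Omega_t)\le 2D_2$.

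I would first try to control the in-plane part $\B'=(B_1,B_2)$ on each slice via a two-dimensional Payne--Weinberger/Neumann inequality. The required mean-zero information would come from orthogonality to gradients: $\int_\Omega\B\cdot\nabla g=0$ for all $g\in H^1(\Omega)$. Choosing $g=g(x_1,x_2)\,\chi(x_3)$ should make the slice averages of $B_1,B_2$ small, modulo terms controlled by $\partial_3\B$.

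The component $B_3$ should then be controlled through the relation $\partial_3B_3=-\partial_1B_1-\partial_2B_2$. Alternatively, one can treat the pair $(B_2,B_3)$ via slices orthogonal to $e_1$, where $\B\cdot\n=0$ and the thinness in the $e_1$ direction help. Combining the two slicings should give
\[
\int|\B|^2\le C D_2^2\int|\nabla\B|^2 ,
\]
and hence the claim with \eqref{eq:1006}.

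This slicing step is the delicate part. The boundary condition $\B\cdot\n=0$ does not project nicely onto slices, because $\n$ is generally tilted. If the direct argument fails, I would simply invoke Savo's theorem: for convex $\Omega\subset\R^n$, the first eigenvalue of the absolute Hodge Laplacian on $p$-forms is at least $c_n/D_{p+1}^2$, where $D_{p+1}$ is the $(p+1)$-th smallest John-ellipsoid semi-axis. This, with Steps 1--2, yields \eqref{Eq:Savo}. Passing from the John ellipsoid to an arbitrary containing ellipsoid is harmless: any containing ellipsoid has, after ordering, $D_2$ at least a universal multiple of the John-ellipsoid $D_2$.
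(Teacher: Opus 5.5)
There is a genuine gap, and it sits exactly where the proof has to happen.

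\textbf{Step 1 reduces to a false inequality.} In Step~1 you drop the co-closedness of $\B$, and that information cannot be recovered. The absolute spectrum of the Hodge Laplacian on $1$-forms also contains the exact eigenforms $df$, with $f$ a Neumann eigenfunction, so $\lambda_{1,1}^{\mathrm{abs}}(\Omega)\le\mu_1^{\mathrm{Neu}}(\Omega)$. Combined with Lemma~\ref{lem:maxwell-ge-neumann}, equality in fact holds for convex $\Omega$. Take $\Omega=(0,a)^2\times(0,L)$ and $f=\cos(\pi x_3/L)$: this gives $\lambda_{1,1}^{\mathrm{abs}}\le\pi^2/L^2$. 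After centring, $\Omega$ lies in an ellipsoid with semi-axes $\frac{\sqrt3}{2}(a,a,L)$, so $D_2\sim a$. Letting $L/a\to\infty$ shows that the inequality $\lambda_{1,1}^{\mathrm{abs}}\ge c/D_2^2$ you reduce to is false. The degree-$1$ version of Savo's estimate you quote is false for the same reason. In degree $1$ only the largest axis can appear, which merely gives back Corollary~\ref{cor:maxwell-diameter-lower-bound}.

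The middle axis governs the bottom of the absolute spectrum in degree $2$. There the exact part (reached from co-closed $1$-eigenforms by $\beta\mapsto d\beta$) carries the Maxwell eigenvalues and the co-exact part carries the Dirichlet ones, so $\lambda_{1,2}^{\mathrm{abs}}=\min\{\lambda_1^{\mathrm{Max}},\lambda_1^{\mathrm{Dir}}\}$. Citing a degree-$2$ bound would be legitimate after checking the indexing convention in Savo's paper, but it amounts to citing the statement itself.

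\textbf{Step 3 is not carried out, and its mechanisms point the wrong way.} Slice averages vanish for the component \emph{normal} to the slices, not for the in-plane ones. Take $g=\chi(x_3)$ in $\int_\Omega\B\cdot\nabla g=0$; equivalently, apply the divergence theorem on $\Omega\cap\{x_3\le t\}$, which never needs $\B\cdot\n$ on the slice. This gives $\int_{\Sigma_t}B_3=0$ for a.e.\ $t$. The two-dimensional Payne--Weinberger inequality on slices of diameter at most $2D_2$, plus Fubini, then yields $\int_\Omega|\nabla B_3|^2\ge\frac{\pi^2}{4D_2^2}\int_\Omega B_3^2$.

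By contrast, cut-offs in $x_3$ and slices orthogonal to $e_1$ bring in the length $D_3$, which may be arbitrarily larger than $D_2$. So neither can produce a bound in terms of $D_2$.

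For $B_1,B_2$ no slicing is needed. The choice $g=x_iB_i$ gives $\int_\Omega B_i^2=-\int_\Omega x_i\,\B\cdot\nabla B_i\le D_2\|\B\|_{L^2}\|\nabla B_i\|_{L^2}$, since $|x_i|\le D_i\le D_2$ on $\Omega$.

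Now add the three bounds, writing $\int B_3^2\ge(\int B_3^2)^2/\int|\B|^2$ and using $\sum_i(\int B_i^2)^2\ge\frac13(\int|\B|^2)^2$. Together with \eqref{eq:1006}, this gives \eqref{Eq:Savo}. These two uses of $\div\B=0$ and $\B\cdot\n=0$ (zero flux of $B_3$ through the slices, and the test function $x_iB_i$) are the missing ingredient, and they are exactly the paper's proof.
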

\begin{remark}
When $\O$ is a cartesian product, say $\O=\omega\times (0;L)$ for some convex $\Omega\subset \R^2$ and $L\gg\mathrm{diam}(\omega)$, \eqref{Eq:Savo} is a consequence of the explicit description of the spectrum obtained by Costabel \& Dauge \cite{CostabelDauge2019}. Namely, in that case, $\lambda_1^{\mathrm{Max}}(\O)$ is bounded from below by the first non-trivial Dirichlet and Neumann eigenvalues of $\omega$, which scale as $1/\mathrm{diam}(\omega)^2$.  We refer to the proof of Theorem \ref{thm:planar-collapse-liminf}, where we use \cite{CostabelDauge2019}.
\end{remark}

This estimate is closely related to \cite[Theorem 3.2]{Savo2011}, where lower bounds are obtained in terms of the semi-axes of the John ellipsoid. Since Proposition~\ref{le:maxwell-box-lower} is formulated here in terms of an enclosing ellipsoid, we provide a direct proof adapted to the present setting.

\begin{proof}[Proof of Proposition \ref{le:maxwell-box-lower}]Up to orthogonal transformations, we assume that the ellipsoid containing $\O$ is described as 
\begin{equation}\label{Eq:Ellipse}\mathcal E=\left\{x:\,\sum_{i=1}^3 \frac{x_i^2}{D_i^2}\leq 1\right\}.\end{equation}
We let $\B\in \mathcal S_N(\O)\setminus\{0\}$ and we estimate the Rayleigh quotients for $B_1,\, B_2$ and $B_3$ in two different ways. We shall first establish that for some universal constant $c>0$
\begin{equation}\label{Eq:Savo3}
\int_\O |\nabla B_3|^2\geq \frac{c}{D_2(\O)^2} \int_\O B_3^2. 
\end{equation}
We will then prove that, for $i\in \{1,2\}$, there holds
\begin{equation}\label{Eq:Savo2}
\int_\O|\nabla B_i|^2\geq \frac1{D_2(\O)^2}\cdot \frac{\left(\int_\O B_i^2\right)^2}{\int_\O |\B|^2}.
\end{equation}
Observe that \eqref{Eq:Savo3}--\eqref{Eq:Savo2} imply \eqref{Eq:Savo}. Indeed, using once more \eqref{eq:1006} yields (for some constant $c'>0$ the value of which is allowed to change from line to line)
\begin{align*}
\int_\O |\curl(\B)|^2&\geq \int_\O |\nabla \B|^2=\int_\O |\nabla B_3|^2+\sum_{i=1,2}\int_\O |\nabla B_i|^2
\\&\geq \frac{1}{D_2(\O)^2}\left(c\int_\O B_3^2+\sum_{i=1,2} \frac{\left(\int_\O B_i^2\right)^2}{\int_\O |\B|^2}\right)\geq \frac{c'}{D_2(\Omega)^2\int_\Omega|\B|^2} \sum_{i=1}^3\left(\int_\Omega B_i^2\right)^2\\
&\geq \frac{c'}{3D_2(\Omega)^2\int_\Omega|\B|^2} \left(\sum_{i=1}^3\int_\Omega B_i^2
\right)^2=\frac{c'}{3D_2(\Omega)^2}\int_\Omega|\B|^2.
\end{align*}
Let us prove \eqref{Eq:Savo3}. This corresponds to a simplification of \cite[Lemma 5.1]{Savo2011}. Denote, for any $t\in \R$, 
$$\Sigma_t:=\{x_3=t\}\cap \O,\, \O_t:=\{x_3\leq t\}\cap \O.$$ 
For any $t$ such that $\Sigma_t,\, \O_t\neq \emptyset$, since $\B\cdot\n=0$ on $\partial\O$
$$\int_{\Sigma_t}B_3=\int_{\partial \O_t} \B\cdot\n=\int_{\O_t}\div(\B)=0.$$ 
In particular, once again from the Payne-Weinberger inequality \cite{payne1960optimal}
$$
\int_{\Sigma_t}|\nabla_{x_1,x_2}B_3|^2 \geq \frac{\pi^2}{\operatorname{diam}(\Sigma_t)^2} \int_{\Sigma_t}B_3^2\geq \frac{\pi^2}{4D_2(\Omega)^2} \int_{\Sigma_t}B_3^2,
$$
It suffices to apply the Fubini theorem to obtain \eqref{Eq:Savo3}.

We move on to \eqref{Eq:Savo2} (which corresponds to \cite[Lemma 5.2]{Savo2011}). We only prove it for $i=2$, the proof for 
$i=1$ being identical. Observe that from \eqref{Eq:Ellipse}
$$
\sup_{x\in \O}|x_2|\leq D_2(\O).
$$ 
and
\begin{align*}
\int_\O B_2^2&=\int_\O B_2 \B\cdot\nabla x_2=-\int_\O x_2\div(B_2\B)
\\&=-\int_\O x_2 \B\cdot \nabla B_2
\\&\leq D_2(\O)\Vert \B\Vert_{L^2(\O)}\cdot\Vert \nabla B_2\Vert_{L^2(\O)}.
\end{align*}This concludes the proof.
\end{proof}
We will also make use of the following estimate for convex sets:
\begin{lemma}\label{Le:Cauchy}
Let $\Omega\subset\mathbb R^3$ be a bounded convex set and let $u\in\mathbb S^2$. Then
\begin{equation}\label{Eq:Cauchy0}
\mathcal H^2\left(\Pi_{u^\perp}\Omega\right) \leq \frac12\Per(\Omega).
\end{equation}
\end{lemma}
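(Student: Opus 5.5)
The plan is to derive this from Cauchy's projection formula, or more directly from a covering argument on the boundary. The direct route works as follows. Since $\Omega$ is convex, almost every line parallel to $u$ that meets the interior of $\Omega$ crosses $\partial\Omega$ in exactly two points. Write $\Pi:=\Pi_{u^\perp}$ for the orthogonal projection. Set
$$
\partial^+\Omega:=\{x\in\partial\Omega:\ \n(x)\cdot u>0\},\qquad \partial^-\Omega:=\{x\in\partial\Omega:\ \n(x)\cdot u<0\}.
$$
These are defined up to an $\mathcal H^2$-null set, because $\partial\Omega$ is Lipschitz and $\n$ exists $\mathcal H^2$-a.e. The aim is to show that the restriction of $\Pi$ to each of $\partial^\pm\Omega$ is injective and that its image covers $\Pi\Omega$ up to a null set. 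The part of the boundary where $\n\cdot u=0$ projects onto a set of measure zero: by the area formula its image is controlled by $\int|\n\cdot u|\,d\mathcal H^2=0$.

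The key steps are these. First, the area formula for the Lipschitz map $\Pi|_{\partial\Omega}$ gives
$$
\int_{\partial\Omega}|\n\cdot u|\,d\mathcal H^2=\int_{u^\perp}\#\bigl(\Pi^{-1}(y)\cap\partial\Omega\bigr)\,d\mathcal H^2(y),
$$
since the tangential Jacobian of the projection at a boundary point is $|\n\cdot u|$. Second, convexity implies that for every $y$ in the relative interior of $\Pi\Omega$ the line $y+\R u$ meets $\overline\Omega$ in a segment with nonempty interior. Its two endpoints lie on $\partial\Omega$, so the multiplicity is at least $2$ there. Outside $\Pi\overline\Omega$ the multiplicity is $0$, and the relative boundary of the convex set $\Pi\Omega$ has measure zero. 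Hence
$$
2\,\mathcal H^2(\Pi\Omega)\le\int_{\partial\Omega}|\n\cdot u|\,d\mathcal H^2\le \mathcal H^2(\partial\Omega)=\Per(\Omega),
$$
where the last equality uses that the De Giorgi perimeter of a Lipschitz (convex) domain equals the surface measure of its boundary. This gives \eqref{Eq:Cauchy0}. In fact the first inequality is an equality, by the two-point property, but the inequality is all that is needed.

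The degenerate case must also be handled. If $\Omega$ is an open convex set with empty interior, then $\Omega$ is empty, which is excluded. So $\Omega$ has nonempty interior and $\partial\Omega$ is Lipschitz, which is all the area formula requires. An alternative route is to cite Cauchy's formula: the mean over $u$ of $\mathcal H^2(\Pi_{u^\perp}\Omega)$ equals $\frac14\Per(\Omega)$. That only controls the average, however, so the pointwise bound should come from the argument above.

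The main point of care is the second step: justifying that the multiplicity is at least $2$ for $\mathcal H^2$-a.e. $y\in\Pi\Omega$, and that the area formula applies with the Jacobian $|\n\cdot u|$. Both are standard once we note that the set where $\n$ is undefined is $\mathcal H^2$-null. The first can be seen directly: the two endpoints of the chord $\overline\Omega\cap(y+\R u)$ are distinct points of $\partial\Omega$ whenever $y\in\Pi\Omega$. So I expect this step to be a short verification rather than a real obstacle.
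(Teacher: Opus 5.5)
Your argument is correct and is essentially the paper's proof: the paper cites the pointwise Cauchy projection formula $\mathcal H^2(\Pi_{u^\perp}\Omega)=\frac12\int_{\partial\Omega}|u\cdot\n|\,d\mathcal H^2$ (Gardner, (A.45)) and then uses $|u\cdot\n|\le1$, while you derive the needed inequality half of that formula yourself from the area formula and the fact that each chord parallel to $u$ has two distinct endpoints on $\partial\Omega$. Your remark that Cauchy's formula ``only controls the average'' confuses it with Cauchy's surface area formula: the pointwise identity above, valid for each fixed $u$, is the standard projection formula, so citing it directly would have sufficed.
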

\begin{proof}[Proof of Lemma \ref{Le:Cauchy}]
Recall the Cauchy projection formula \cite[Appendix A, Eq.~(A.45), p.~408]{Gardner2006},
$$
\mathcal H^2\left(\Pi_{u^\perp}\Omega\right)=\frac12\int_{\partial\Omega}|u\cdot\n|\,d\mathcal H^2.
$$
Since $|u\cdot\n|\leq1$, \eqref{Eq:Cauchy0} follows.
\end{proof}

\subsection{Analysis of Case II: the two-dimensional collapse (Theorem \ref{thm:planar-collapse-liminf})}\label{Se:2dCollapse}

We begin with a geometric result.
\begin{proposition}[Graph representation near a planar convex limit]\label{prop:graph-representation-planar-collapse}
Let $\{\O_k\}_{k\in \N}\subset \mathcal C$ be a sequence of bounded convex domains such that, up to rigid motions, $\overline{\Omega_k}\underset{k\to\infty}\rightarrow \omega\times\{0\}$ in the Hausdorff sense, where $\omega\subset\mathbb R^2$ is a compact convex set with nonempty interior. Let $\omega_k:=\Pi_{\mathbb R^2}(\Omega_k)\subset\mathbb R^2$ be the orthogonal projection onto the horizontal plane. For any $k\in \N$, we can write
$$
\Omega_k=\left\{(x,z)\in\mathbb R^3:\ x\in\omega_k,\ a_k(x)<z<b_k(x)\right\}
$$
for some convex function $a_k:\omega_k\to\mathbb R$ and some concave function
$b_k:\omega_k\to\mathbb R$. The thickness
$$
h_k:=b_k-a_k
$$
is concave on $\omega_k$, satisfies $h_k\geq 0$  and $\|h_k\|_{L^\infty(\omega_k)}\underset{k\to\infty}\rightarrow0$.
Moreover, $\omega_k\underset{k\to \infty}\rightarrow \omega$ in the Hausdorff sense.
\end{proposition}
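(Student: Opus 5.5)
Since $\Omega_k$ is open, bounded and convex, we first describe it over its projection. Set $\omega_k:=\Pi_{\mathbb R^2}(\Omega_k)$, which is open, bounded and convex. For $x\in\omega_k$ the vertical fibre $\{z:(x,z)\in\Omega_k\}$ is a nonempty open convex subset of $\mathbb R$, hence an interval $(a_k(x),b_k(x))$, and we set
$$a_k(x):=\inf\{z:(x,z)\in\Omega_k\},\qquad b_k(x):=\sup\{z:(x,z)\in\Omega_k\}.$$
These are finite by boundedness, and the graph representation holds by construction.

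Concavity of $b_k$ follows directly from convexity. Take $x,y\in\omega_k$, $t\in[0,1]$, and points $(x,z)$, $(y,w)$ of $\Omega_k$ with $z,w$ close to $b_k(x),b_k(y)$. Their convex combination lies in $\Omega_k$, so $b_k(tx+(1-t)y)\geq tz+(1-t)w$, and taking suprema gives the claim. The same argument shows that $a_k$ is convex. Hence $h_k=b_k-a_k$ is concave as a sum of concave functions, and $h_k>0$ on $\omega_k$ because the fibres are nonempty open intervals.

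For the smallness of $h_k$, write $\varepsilon_k:=d_H(\overline{\Omega_k},\omega\times\{0\})\to0$. Every point of $\Omega_k$ lies within $\varepsilon_k$ of $\omega\times\{0\}$, so $\Omega_k\subset\mathbb R^2\times[-\varepsilon_k,\varepsilon_k]$. Therefore $a_k\geq-\varepsilon_k$ and $b_k\leq\varepsilon_k$, which gives $\|h_k\|_{L^\infty(\omega_k)}\leq2\varepsilon_k\to0$.

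For the convergence of the projections, use that $\Pi_{\mathbb R^2}$ is $1$-Lipschitz, so $d_H(\Pi A,\Pi B)\leq d_H(A,B)$ for compact sets. Since $\Pi(\overline{\Omega_k})=\overline{\omega_k}$ (projection commutes with closure for bounded sets) and $\Pi(\omega\times\{0\})=\omega$, we get $d_H(\overline{\omega_k},\omega)\leq\varepsilon_k$, i.e. $\omega_k\to\omega$ in the Hausdorff sense, understood for closures.

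None of these steps is a real obstacle. The only care needed is to read the Hausdorff convergence of the open sets $\omega_k$ as convergence of their closures, and to note that "up to rigid motions" has already been absorbed into the normalisation before the graph representation is written.
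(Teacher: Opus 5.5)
Your proof is correct and follows essentially the same route as the paper: you build the graph representation from the open vertical fibres, get convexity of $a_k$ and concavity of $b_k$ by passing to the limit in convex combinations, bound $|z|$ by $d_H(\overline{\Omega_k},\omega\times\{0\})$, and use that the orthogonal projection is $1$-Lipschitz for the Hausdorff distance together with $\Pi_{\mathbb R^2}(\overline{\Omega_k})=\overline{\omega_k}$. Your observation that in fact $h_k>0$ on $\omega_k$ (because the fibres are nonempty open intervals) is slightly sharper than the paper's $h_k\geq 0$, but is not needed.
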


\begin{proof}
Fix $k\in \N$. Since $\Omega_k$ is open and convex, its orthogonal projection $\omega_k=\Pi_{\mathbb R^2}(\Omega_k)$ is an open convex subset of $\mathbb R^2$.
For each $x\in\omega_k$, the vertical section
$$
I_k(x):=\{z\in\mathbb R:(x,z)\in\Omega_k\}
$$
is a nonempty open interval. Indeed, it is nonempty by definition of $\omega_k$, it is open because $\Omega_k$ is open, and it is an interval because $\Omega_k$ is convex. Since $\Omega_k$ is bounded, $I_k(x)$ is bounded. Hence there exist real numbers $a_k(x)<b_k(x)$ such that $I_k(x)=(a_k(x),b_k(x))$.

Therefore
$$
\Omega_k= \left\{(x,z)\in\mathbb R^3:\ x\in\omega_k,\ a_k(x)<z<b_k(x)\right\}.
$$

We now prove that $a_k$ is convex. Let $x_1,x_2\in\omega_k$ and $t\in[0,1]$. For any $z_i>a_k(x_i)$ with $i=1,2$, one has $(x_i,z_i)\in\Omega_k$. By convexity of $\Omega_k$, $\left( tx_1+(1-t)x_2,\ tz_1+(1-t)z_2\right)\in\Omega_k$,
hence
$$
a_k\left( tx_1+(1-t)x_2\right)<tz_1+(1-t)z_2.
$$
Letting $z_i\downarrow a_k(x_i)$ yields
$$
a_k\left( tx_1+(1-t)x_2\right)\leq ta_k(x_1)+(1-t)a_k(x_2).
$$
Thus $a_k$ is convex. Similarly, if $z_i<b_k(x_i)$, then $(x_i,z_i)\in\Omega_k$, so
$b_k\left( tx_1+(1-t)x_2\right)\geq tb_k(x_1)+(1-t)b_k(x_2)$, which proves that $b_k$ is concave. Consequently
$h_k=b_k-a_k$ is concave, and clearly $h_k\geq 0$ on $\omega_k$.

Set $\delta_k:=d_H\left( \overline{\Omega_k},\omega\times\{0\}\right)$.
Then $\delta_k\to0$. Let $x\in\omega_k$ and $z\in I_k(x)$. Since $(x,z)\in\Omega_k\subset \overline{\Omega_k}$, the definition of Hausdorff distance gives $\operatorname{dist}\left( (x,z),\omega\times\{0\}\right)\leq \delta_k$.
As $\omega\times\{0\}\subset \mathbb R^2\times\{0\}$, we obtain
$$
|z|=\operatorname{dist}\left( (x,z),\mathbb R^2\times\{0\}\right)
\le
\operatorname{dist}\left( (x,z),\omega\times\{0\}\right)
\leq \delta_k.
$$
Thus every $z\in I_k(x)$ belongs to $[-\delta_k,\delta_k]$. Since $I_k(x)=(a_k(x),b_k(x))$, it follows that
$-\delta_k\leq a_k(x)<b_k(x)\leq \delta_k$. Therefore
$$
0\leq h_k(x)=b_k(x)-a_k(x)\leq 2\delta_k
\qquad\text{for all }x\in\omega_k,
$$
and hence $\|h_k\|_{L^\infty(\omega_k)}\leq 2\delta_k\to0$.

Finally, let $A,B$ be compact subsets of $\mathbb R^3$. Set $\eta:=d_H(A,B)$. For every $x\in A$, there exists $y\in B$ such that $|x-y|\leq \eta$, and therefore $|\Pi_{\mathbb R^2}( x)-\Pi_{\mathbb R^2}(y)|\leq |x-y|\leq \eta$.
Thus $\operatorname{dist}(\Pi_{\mathbb R^2}(x),\Pi_{\mathbb R^2}(B))\leq \eta$ for every $x\in A$,  and, by symmetry, $d_H(\Pi_{\mathbb R^2}(A),\Pi_{\mathbb R^2}(B))\leq d_H(A,B)$.
Applying this to $A=\overline{\Omega_k}$ and $B=\omega\times\{0\}$ yields
$$
d_H\left( \Pi_{\mathbb R^2}\left(\overline{\Omega_k}\right),\Pi_{\mathbb R^2}(\omega\times\{0\})\right)
\leq d_H\left( \overline{\Omega_k},\omega\times\{0\}\right)=\delta_k.
$$
Now, $\Pi_{\mathbb R^2}(\omega\times\{0\})=\omega$, and, because $\Omega_k$ is bounded,
$$
\Pi_{\mathbb R^2}\left(\overline{\Omega_k}\right)=\overline{\Pi_{\mathbb R^2}(\Omega_k)}=\overline{\omega_k}.
$$
Hence $d_H\left( \overline{\omega_k},\omega\right)\leq \delta_k\to0$.
\end{proof}

We are now in a position to prove Theorem~\ref{thm:planar-collapse-liminf}.
\begin{proof}[Proof of Theorem \ref{thm:planar-collapse-liminf}]
Set $\delta_k:=d_H\left(\overline{\Omega_k},\omega\times\{0\}\right)\underset{k\to\infty}\longrightarrow0$, $\omega_k:=\Pi_{\mathbb R^2}(\Omega_k)$.
By Proposition~\ref{prop:graph-representation-planar-collapse}, the sets $\omega_k$ are bounded open convex subsets of $\mathbb R^2$,  $\omega_k\underset{k\to \infty}\rightarrow\omega$ in the Hausdorff sense and $\Omega_k\subset \omega_k\times(-\delta_k,\delta_k)$ for every $k$.
Since $\operatorname{diam}(\Omega_k)=1$ and $\overline{\Omega_k}\to \omega\times\{0\}$, one has
$\operatorname{diam}(\omega)=1$, $d_k:=\operatorname{diam}(\omega_k)\to 1$.

For any $k$, let $x_k^\pm\in \overline{\omega_k}$ and a unit vector $e_k\in\mathbb S^1$ such that $x_k^+-x_k^-=d_k e_k$. Up to extraction,
$$
e_k\to e\in\mathbb S^1,
\qquad
x_k^\pm\to x^\pm\in \overline{\omega}.
$$
Passing to the limit in $x_k^+-x_k^-=d_k e_k$ gives $x^+-x^-=e$. Since $|e|=1=\operatorname{diam}(\omega)$, the direction $e$ realizes a diameter of $\omega$.

Now define
$$
w_k:=\max_{x\in\overline{\omega_k}} x\cdot e_k^\perp-\min_{x\in\overline{\omega_k}} x\cdot e_k^\perp.
$$
As $e_k^\perp\underset{k\to \infty}\rightarrow e^\perp$, we infer from the Hausdorff convergence of $\{\omega_k\}_{k\in \N}$ to $\omega$ that
$$
w_k\underset{k\to\infty}\rightarrow w_\ast:=\max_{x\in\omega} x\cdot e^\perp-\min_{x\in\omega} x\cdot e^\perp.
$$
Because $\omega$ has nonempty interior, one has $w_\ast>0$.
In particular, for all $k$ large enough,
\begin{equation}\label{eq:planar-box-order}
w_k>0, \qquad 2\delta_k\leq w_k, \qquad w_k\leq d_k.
\end{equation}

Let us now establish that 
\begin{equation}\label{Eq:Int}
\mathrm{Per}(\O_k)\geq 2|\omega_k|\geq d_k w_k.
\end{equation}
The first inequality follows from Lemma~\ref{Le:Cauchy}. To prove the second one, after a horizontal rotation we may assume that $e_k=e_1$, $e_k^\perp=e_2$.
The projection of $\omega_k$ onto the $e_2$-axis is an interval $(\tau_k^-,\tau_k^+)$ of length $w_k$. Since $\omega_k$ is convex, there exist a convex function $\alpha_k$ and a concave function $\beta_k$ such that
$$
\omega_k=\left\{(s,t)\in\mathbb R^2: t\in(\tau_k^-,\tau_k^+),\ \alpha_k(t)<s<\beta_k(t)
\right\}.
$$
Set $\ell_k(t):=\beta_k(t)-\alpha_k(t)$.
Then $\ell_k$ is nonnegative and concave. The two points realizing the diameter $d_k$ have the same $e_2$-coordinate, say $t_k$, and therefore $\ell_k(t_k)=d_k$.
Since $\ell_k$ is concave and nonnegative on an interval of length $w_k$, it lies above the affine tent function with height $d_k$ at $t_k$ and zero values at the endpoints. Hence
$$
|\omega_k|=\int_{\tau_k^-}^{\tau_k^+}\ell_k(t)\,dt\geq \frac{d_kw_k}{2}.
$$
Together with Lemma~\ref{Le:Cauchy}, this gives
$$
\Per(\Omega_k)\geq2|\omega_k|\geq d_kw_k,
$$
which proves \eqref{Eq:Int}.

Since $\omega_k$ has projection lengths $d_k$ and $w_k$ in the directions $e_k$ and $e_k^\perp$, respectively, it is contained in a rectangle with side lengths $d_k$ and $w_k$. Hence $\Omega_k$ is contained in a rectangular box with side lengths $d_k$, $w_k$, and $2\delta_k$. This box is contained in an ellipsoid whose semi-axes are bounded above by universal multiples of these three lengths. Since, for $k$ large enough, $d_k\geq w_k\geq2\delta_k$, Proposition~\ref{le:maxwell-box-lower}, after modifying the universal constant if necessary, gives
$$
\lambda_1^{\mathrm{Max}}(\Omega_k)\geq \frac{c_{\Box}}{w_k^2}.
$$
We thus deduce that, for any $k$ large enough,
$$
\Per(\Omega_k)\lambda_1^{\mathrm{Max}}(\Omega_k)\geq d_k w_k\frac{c_{\Box}}{w_k^2}=c_{\Box}\frac{d_k}{w_k}\geq c_{\Box},
$$
because $w_k\leq d_k$.
Hence
$$
\liminf_{k\to\infty} \Per(\Omega_k)\lambda_1^{\mathrm{Max}}(\Omega_k) \geq c_{\Box}>0.
$$
This proves the theorem.\end{proof}

\subsection{Analysis of Case III: the one-dimensional collapse (Theorem \ref{thm:segment-collapse-product-blowup})}\label{Se:1dCollapse}
The goal of this section is to prove Theorem \ref{thm:segment-collapse-product-blowup}. 
\begin{proof}[Proof of Theorem~\ref{thm:segment-collapse-product-blowup}]
Up to a rigid motion, we may assume that $K=\left\{(0,0,t):0\leq t\leq1\right\}$. For every $k$, choose $p_k,q_k\in\overline{\Omega_k}$ such that
$$
|p_k-q_k|=\operatorname{diam}(\Omega_k)=1.
$$
After applying a rigid motion sending $p_k$ to $0$ and $q_k$ to $e_3$, and extracting a subsequence if necessary, we may assume that $0,e_3\in\overline{\Omega_k}$ and $\overline{\Omega_k}\to K$ in the Hausdorff sense. By convexity, $K\subset\overline{\Omega_k}$.

Let $\omega_k:=\Pi_{\mathbb R^2}(\Omega_k)$, where $\Pi_{\mathbb R^2}$ denotes the orthogonal projection onto the plane $\{x_3=0\}$. Since orthogonal projection is continuous for the Hausdorff distance and $\Pi_{\mathbb R^2}(K)=\{0\}$, one has
\begin{equation}\label{Eq:DiamProjection}
\operatorname{diam}(\omega_k)\to0.
\end{equation}

Moreover, the conditions $0,e_3\in\overline{\Omega_k}$ and $\operatorname{diam}(\Omega_k)=1$ imply
$$
\overline{\Omega_k}\subset\overline{\omega_k}\times[0,1].
$$
This set is contained in an ellipsoid whose two smallest semi-axes are bounded above by $C\operatorname{diam}(\omega_k)$
for a universal constant $C>0$. Proposition~\ref{le:maxwell-box-lower} therefore gives
\begin{equation}\label{Eq:NoScale}
\lambda_1^{\mathrm{Max}}(\Omega_k)\geq \frac{c}{\operatorname{diam}(\omega_k)^2}
\end{equation}
for some universal constant $c>0$.

Let us now prove that, for some constant $c>0$ there holds 
\begin{equation}\label{Eq:DiamLB}
\mathrm{Per}(\O_k)\geq c\mathrm{diam}(\omega_k).\end{equation} 
First, note that, for any $u\in\mathbb S^2$, Lemma \ref{Le:Cauchy} yields
\begin{equation}\label{Eq:Cauchy}
\Per(\O_k)\geq 2\,\mathcal H^2\left(\Pi_{u^\perp}\O_k\right).
\end{equation} 
Second, since $0\in \omega_k$, let $p_k=(x_{1,k},x_{2,k},x_{3,k})\in \bar{\O_k}$ be such that 
$$\max_{i=1,2}|x_{i,k}|\geq \frac{\mathrm{diam}(\omega_k)}2.$$  Consider the plane $P_k:=\mathrm{span}(p_k,e_3)$ and the triangle $T_k$ with vertices $0,\, e_3$ and $p_k$. On the one hand, we have 
$$|T_k|\geq c\mathrm{diam}(\omega_k)$$ for some universal constant $c>0$. On the other hand, since 
$$T_k\subset \Pi_{P_k}(\O_k),$$ we deduce from \eqref{Eq:Cauchy} that for some universal constant $c>0$ there holds
$$c\mathrm{diam}(\omega_k)\leq \mathrm{Per}(\O_k),$$ establishing \eqref{Eq:DiamLB}.

Combining \eqref{Eq:DiamLB} with \eqref{Eq:NoScale}, we obtain
$$
J_{\mathrm{Per}}(\Omega_k) \geq \frac{c}{\operatorname{diam}(\omega_k)}\to+\infty.
$$ This concludes the proof of Theorem \ref{thm:segment-collapse-product-blowup}.

\end{proof}

\subsection{Conclusion of the proof}
We can now complete the proof of Theorem \ref{thm:positive-infimum}. Let $\{\O_k\}_{k\in \N}\subset\mathcal C$ be a minimising sequence. As above, normalise by the diameter and extract a Hausdorff-convergent subsequence, still denoted by $\{\O_k\}_{k\in \N}$, so that $\operatorname{diam}(\Omega_k)=1$, $\overline{\Omega_k}\underset{k\to\infty}\rightarrow K$ in the Hausdorff sense, for some compact convex set $K$ with $\operatorname{diam}(K)=1$. If $\mathrm{dim}(K)=3$, Case I entails $\underset{k\to\infty}{\lim\inf} J_{\mathrm{Per}}(\O_k)>0$. If $\mathrm{dim}(K)=2$, Case II yields $\underset{k\to\infty}{\lim\inf}J_{\mathrm{Per}}(\O_k)\geq c_{\Box}>0$.  From Case III, we can not have $\mathrm{dim}(K)=1$ for a minimising sequence. This concludes the proof.

\section{Results pointing towards a planar collapse: proofs of Theorems \ref{Th:CylindricalCollapse}, \ref{Th:CvgDR}, and \ref{Th:AnaDR}}\label{sec:planar-collapse}
\subsection{Collapse of cylindrical domains: proof of Theorem \ref{Th:CylindricalCollapse}}
We recall the following result from \cite{CostabelDauge2019}: for any convex set $\omega\subset \R^2$, $|\omega|>0$, and any $\ell>0$, 
\begin{equation}\label{Ed:CDauge}
\lambda_{1}^{\mathrm{Max}}(\omega_\ell)=\min\left\{\lambda_1^{\mathrm{Dir}}(\omega),\ \mu_1^{\mathrm{Neu}}(\omega)+\tfrac{\pi^2}{\ell^2}\right\}.
\end{equation}
\begin{proof}[Proof of Theorem \ref{Th:CylindricalCollapse}]
First, observe that 
\begin{equation}\label{Eq:CylinderPerimeter}
\mathrm{Per}(\omega_\ell)=2|\omega|+\ell \mathrm{Per}(\omega).\end{equation}
Now, from the Faber--Krahn inequality (see e.g. \cite{henrot2018shape}), the first Dirichlet eigenvalue satisfies
$\lambda_1^{\mathrm{Dir}}(\omega)\geq \lambda_1^{\mathrm{Dir}}(\mathbb B_\omega)$, where $\mathbb B_\omega$ is the disk of area $|\omega|$. This yields
$$
\lambda_1^{\mathrm{Dir}}(\omega)\geq \lambda_1^{\mathrm{Dir}}(B_s)=\frac{\pi j_{0,1}^2}{|\omega|}.
$$

Furthermore, as $\omega$ is convex, we can apply the Payne-Weinberger inequality \cite{payne1960optimal} to obtain 
$$
\mu_1^{\mathrm{Neu}}(\omega)\ge\frac{\pi^2}{\operatorname{diam}(\omega)^2}.
$$
Consequently, we deduce that 
\begin{equation}\label{Eq:Oph} 
J_{\mathrm{Per}}(\omega_\ell)\geq \min\left((2|\omega|+\ell\mathrm{Per}(\omega))\frac{\pi j_{0,1}^2}{|\omega|}, (2|\omega|+\ell\mathrm{Per}(\omega))\left(\frac1{\mathrm{diam}(\omega)^2}+\frac1{\ell^2}\right)\pi^2 \right).\end{equation}

On the one hand, we have
$$(2|\omega|+\ell\mathrm{Per}(\omega))\frac{\pi j_{0,1}^2}{|\omega|}\geq2\pi j_{0,1}^2.$$
On the other hand, since $\omega\subset\R^2$, as
$$\mathrm{Per}(\omega)\geq 2\mathrm{diam}(\omega),$$
\begin{align*}
(2|\omega|+\ell\mathrm{Per}(\omega))\left(\frac1{\mathrm{diam}(\omega)^2}+\frac1{\ell^2}\right)\pi^2&\geq 2\pi^2\left(|\omega|+\ell \mathrm{diam}(\omega)\right)\left(\frac1{\mathrm{diam}(\omega)^2}+\frac1{\ell^2}\right)
\\&\geq 2\pi^2\left(\frac{\ell}{\mathrm{diam}(\omega)}+\frac{\mathrm{diam}(\omega)}\ell\right)
\\&\geq 4 \pi^2
\\&> 2\pi j_{0,1}^2,
\end{align*}
where we use the fact that for any $x>0$ there holds $x+\frac1x\geq 2$ and $2\pi\geq j_{0,1}^2$. Furthermore, taking $\omega$ to be a disk, we deduce that
$$
\lim_{\ell\to0^+}J_{\mathrm{Per}}(\omega_\ell)=2\pi j_{0,1}^2.
$$
This value is approached only in the limit $\ell\to0^+$. This concludes the proof.
\end{proof}

\subsection{A dimensionally reduced version of \eqref{Eq:PvMaxwell}: proof of Theorem \ref{Th:CvgDR}}
We will use the following consequence of the Helmholtz decomposition theorem:
\begin{lemma}[Stream function]\label{lem:stream-function-thin}
Let $\omega\subset\mathbb R^2$ be a bounded convex domain, and let
$\F\in L^2(\omega;\mathbb R^2)$. Assume that
\begin{equation}\label{eq:weak-div-normal-zero}
\int_\omega \F\cdot\nabla\varphi\,dx=0 \qquad \forall\varphi\in H^1(\omega).
\end{equation}
Equivalently, $\div\F=0$ in $\omega$ and the normal trace of $\F$ vanishes on $\partial\omega$, that is, $\F\cdot\n=0$ in $H^{-1/2}(\partial\omega)$. Thus $\F$ is tangent to the boundary in the normal-trace sense. Then there exists a unique $u\in H^1_0(\omega)$ such that
\begin{equation}\label{eq:stream-lemma-representation}
\F=\nabla^\perp u
\qquad\text{in }\omega.
\end{equation}
If, in addition, $\F\in H^1(\omega;\mathbb R^2)$, then
\begin{equation}\label{eq:stream-lemma-H2}
u\in H^2(\omega)\cap H^1_0(\omega).
\end{equation}
\end{lemma}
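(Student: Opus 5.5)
We follow the standard two-dimensional Helmholtz argument: rotate $\F$ to turn the divergence-free condition into a curl-free one, write the rotated field as a gradient, and then read the boundary condition off the vanishing normal trace. Set $\mathbf G:=-\F^\perp$, so that $\F=\mathbf G^{\perp}$ because $(\mathbf v^\perp)^\perp=-\mathbf v$. The weak condition \eqref{eq:weak-div-normal-zero}, tested first with $\varphi\in C_c^\infty(\omega)$, gives $\div\F=0$ in $\mathcal D'(\omega)$. This is equivalent to the scalar curl $\partial_1G_2-\partial_2G_1$ vanishing in $\mathcal D'(\omega)$. Since $\omega$ is convex, it is simply connected and Lipschitz. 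The $L^2$ Poincaré lemma then provides $u\in H^1(\omega)$ with $\mathbf G=\nabla u$. This follows, for instance, from the Helmholtz decomposition $L^2(\omega;\R^2)=\nabla H^1(\omega)\oplus\{\mathbf w:\ \div\mathbf w=0,\ \mathbf w\cdot\n=0\}$ applied after rotation; alternatively, it can be obtained by mollification on star-shaped subdomains. Hence $\F=\nabla^\perp u$, and $u$ is determined up to an additive constant.

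The main step is to show that $u$ is constant on $\partial\omega$, so that the constant can be fixed to place $u$ in $H^1_0(\omega)$. For $\varphi\in C^\infty(\overline\omega)$ we use the integration by parts $\int_\omega\nabla^\perp u\cdot\nabla\varphi=\int_{\partial\omega}u\,\partial_\tau\varphi\,d\mathcal H^1$, where $\tau$ is the unit tangent. This is valid for Lipschitz domains because $\nabla^\perp u\cdot\nabla\varphi=\div(u\nabla^\perp\varphi)$ and $\nabla^\perp\varphi\cdot\n=\pm\partial_\tau\varphi$. Hypothesis \eqref{eq:weak-div-normal-zero} therefore gives $\int_{\partial\omega}u\,\partial_\tau\varphi=0$ for every smooth $\varphi$. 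The boundary $\partial\omega$ is a single closed Lipschitz curve, and the traces $\partial_\tau\varphi$ are dense in the space of mean-zero functions in $L^2(\partial\omega)$. Consequently the trace of $u$, which lies in $H^{1/2}(\partial\omega)\subset L^2(\partial\omega)$, is orthogonal to all mean-zero functions and is thus a constant $c$. Replacing $u$ by $u-c$ yields $u\in H^1_0(\omega)$.

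Uniqueness is immediate. If $\nabla^\perp u_1=\nabla^\perp u_2$, then $u_1-u_2$ is constant on the connected set $\omega$ and vanishes on the boundary, so $u_1=u_2$.

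For the regularity claim \eqref{eq:stream-lemma-H2}, if $\F\in H^1(\omega;\R^2)$ then $\nabla u=-\F^\perp\in H^1$, so all second derivatives of $u$ lie in $L^2$ and $u\in H^2(\omega)$. Note that convexity of $\omega$ is not even needed for this last step; it is used only to guarantee simple connectedness and a connected boundary in the first two steps. We expect the trace argument of the second paragraph to be the only delicate point. If the density claim for tangential derivatives proves awkward on a merely Lipschitz boundary, an alternative is to choose $u$ directly as the $H^1_0$ solution of $\Delta u=\operatorname{curl}\F:=\partial_1F_2-\partial_2F_1$. One then checks that $\F-\nabla^\perp u$ is simultaneously divergence-free with zero normal trace and curl-free, hence is a gradient $\nabla\psi$ with $\psi$ harmonic and $\partial_\n\psi=0$, so that $\nabla\psi=0$.
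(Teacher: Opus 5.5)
Your proof is correct and follows the same route as the paper. Both obtain a stream function from the $L^2$ Poincar\'e lemma on the simply connected domain (the paper simply cites Girault--Raviart), deduce that the trace of $u$ is constant from the vanishing normal trace and the connectedness of $\partial\omega$, and get $H^2$ regularity from $\nabla u\in H^1$. Your trace step is more carefully justified than the paper's one-line claim (the density of tangential derivatives does hold, e.g.\ by Lipschitz extension and mollification), your fallback via the $H^1_0$ solution of $\Delta u=\operatorname{curl}\F$ is also sound, and the only blemish is a harmless sign: $\nabla^\perp u\cdot\nabla\varphi=-\div(u\nabla^\perp\varphi)$, which your $\pm$ absorbs.
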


\begin{proof}
Testing \eqref{eq:weak-div-normal-zero} with functions in $H^1_0(\omega)$ gives $\div\F=0$ in $\mathcal D'(\omega)$. Hence $\F\in H(\operatorname{div};\omega)$, and the Green formula together with \eqref{eq:weak-div-normal-zero} shows that its normal trace satisfies $\F\cdot\n=0$ on $\partial\omega$. Since $\omega$ is simply connected, the stream-function result of \cite[Chap.~I, \S 2]{GiraultRaviart1986} yields $u\in H^1(\omega)$ such that $\F=\nabla^\perp u$, uniquely up to an additive constant. The condition $\F\cdot\n=0$ implies that the tangential derivative of the trace of $u$ vanishes on $\partial\omega$. Since $\partial\omega$ is connected, this trace is constant; subtracting this constant gives $u\in H^1_0(\omega)$ and fixes $u$ uniquely. If in addition $\F\in H^1(\omega;\mathbb R^2)$, then $\nabla u=(-F_2,F_1)\in H^1(\omega;\mathbb R^2)$, whence $u\in H^2(\omega)$. Therefore $u\in H^2(\omega)\cap H^1_0(\omega)$.
\end{proof}
\begin{figure}[h]
\centering
\begin{minipage}{0.42\textwidth}
\centering
\includegraphics[width=\textwidth]{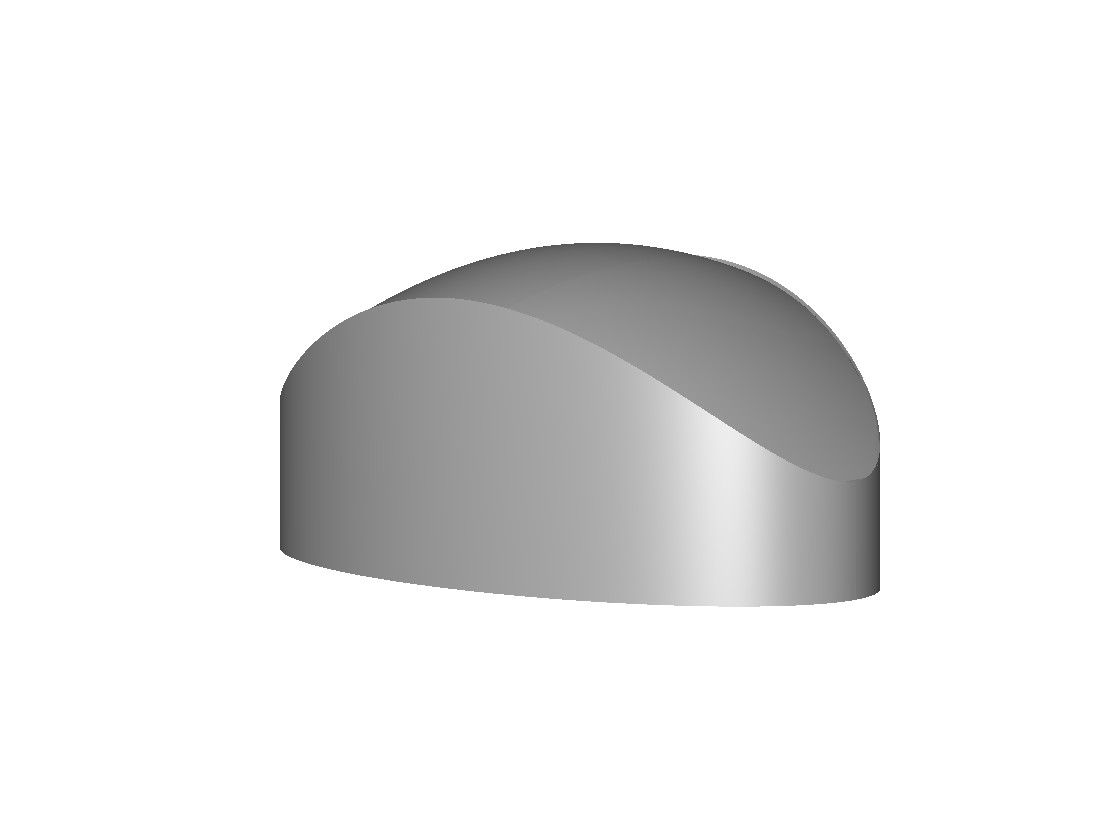}
\end{minipage}
\hspace{0.06\textwidth}
\begin{minipage}{0.42\textwidth}
\centering
\includegraphics[width=\textwidth]{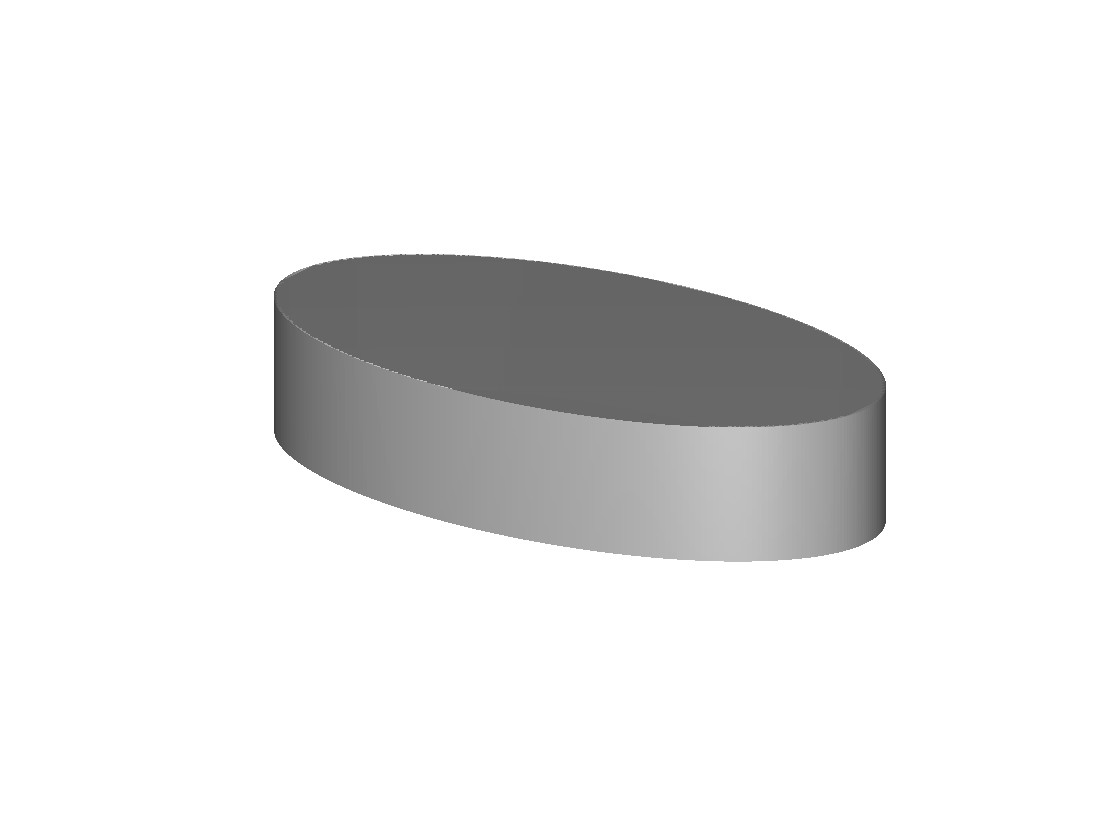}
\end{minipage}
\caption{Two graph-like thin convex domains of the form $\Omega_\varepsilon=\{(x,z):x\in\omega,\ 0<z<\varepsilon h(x)\}$. The left picture corresponds to a nonconstant concave thickness profile, whereas the right picture illustrates the constant-thickness case.}
\label{fig:omeps}
\end{figure}

\begin{proof}[Proof of Theorem \ref{Th:CvgDR}]
We prove the statement by showing that every sequence $\varepsilon_k\searrow0$ admits a subsequence along which the corresponding eigenvalues converge to $\lambda_1^{\mathrm{weight}}(h)$. 
Thus, fix a sequence $\varepsilon_k\searrow0$ and, for simplicity of notation, keep the index $\varepsilon$. We set
$$
\lambda_{1,\varepsilon}:=\lambda_1^{\mathrm{Max}}(\Omega_{\varepsilon,h}).
$$
\paragraph{Uniform bound on $\lambda_{1,\varepsilon}$ and upper limit}
Let us first prove 
\begin{equation}\label{eq:uniform-upper-thin}
\lambda_{1,\varepsilon}\leq C_0
\qquad \forall \varepsilon\in(0,1).
\end{equation}
To this end, fix $u_\ast\in H^2(\omega)\cap H^1_0(\omega)$ such that $u_\ast\not\equiv0$, and set
$$
\W_\ast:=\frac1h\nabla^\perp u_\ast.
$$
Define, for $x\in\omega$ and $0\leq z\leq \varepsilon h(x)$, $\B_\ast(x,z):=\left(\W_\ast(x),-z\div_x\W_\ast(x)\right)$.  By construction, $\div\B_\ast=0$.
On the lateral boundary, the condition $u_\ast=0$ implies that $\nabla^\perp u_\ast$ is tangent to $\partial\omega$, and hence $\B_\ast\cdot\n=0$. The unit outer normal to the upper graph $z=\varepsilon h(x)$ is
$$
\n_\varepsilon=\frac{(-\varepsilon\nabla h,1)}{\sqrt{1+\varepsilon^2|\nabla h|^2}}.
$$
Since $h\div_x\W_\ast=-\W_\ast\cdot\nabla_x h$, we obtain on the upper graph
$$
\B_\ast\cdot\n_\varepsilon=\frac{-\varepsilon\W_\ast\cdot\nabla h-\varepsilon h\div\W_\ast}{\sqrt{1+\varepsilon^2|\nabla h|^2}}=0.
$$
Thus $\B_\ast$ is admissible.

Moreover,
$$
\int_{\Omega_{\varepsilon,h}}|\B_\ast|^2=\varepsilon\int_\omega\frac{|\nabla u_\ast|^2}{h}+\operatorname{O}(\varepsilon^3) \qquad \text{and}\qquad  \curl\B_\ast=\left(-z\partial_{x_2}\div\W_\ast,z\partial_{x_1}\div\W_\ast,\div\left(\frac1h\nabla u_\ast\right)\right).
$$
Consequently,
$$
\int_{\Omega_{\varepsilon,h}}|\curl\B_\ast|^2=\varepsilon\int_\omega h\left(\div\left(\frac1h\nabla u_\ast\right)\right)^2+\operatorname{O}(\varepsilon^3).
$$
This yields
$$
\lambda_{1,\varepsilon} \leq \frac{\int_\omega h\left(\div\left(\frac1h\nabla u_\ast\right)\right)^2}{\int_\omega\frac1h|\nabla u_\ast|^2}+\operatorname{o}(1)\qquad\text{as }\varepsilon\to0^+.
$$
Note that this also yields
\begin{equation}\label{Eq:Limsup}
\underset{\varepsilon\to 0^+}{\lim\sup}\, \lambda_{1,\varepsilon}\leq \inf_{u\in H^2(\omega)\cap H^1_0(\omega)\setminus\{0\}} \frac{\int_\omega h\left(\div\left(\frac{\nabla u}h\right)\right)^2 }{\int_\omega \frac{|\nabla u|^2}{h}}.\end{equation}

\paragraph{Compactness of the rescaled field and lower limit} For any $\varepsilon>0$, let $\B_\varepsilon$ be an $L^2$-normalised eigenfield. 
Set
$$
\widetilde\Omega_h:=\left\{(x,\zeta):x\in\omega,\ 0<\zeta<h(x)\right\},\qquad \Phi_\varepsilon(x,\zeta):=(x,\varepsilon\zeta),
$$
and define
$$
\W_\varepsilon:=\left(\W_{\varepsilon,\parallel},W_{\varepsilon,3}\right):=\sqrt{\varepsilon}\,\B_\varepsilon\circ\Phi_\varepsilon \qquad\text{on }\widetilde\Omega_h.
$$

By a change of variables, we obtain 
\begin{equation}\label{eq:norm-W}
\int_{\widetilde\Omega_h}|\W_\varepsilon|^2
=\int_{\Omega_{\varepsilon,h}}|\B_\varepsilon|^2=1.
\end{equation}
Moreover,
\begin{align}
\int_{\widetilde\Omega_h}|\nabla_x\W_\varepsilon|^2
&=\int_{\Omega_{\varepsilon,h}}|\nabla_x \B_\varepsilon|^2\leq C_0,
\label{eq:horiz-bound-W}
\\
\int_{\widetilde\Omega_h}|\partial_\zeta \W_\varepsilon|^2
&=\varepsilon^2\int_{\Omega_{\varepsilon,h}}|\partial_z \B_\varepsilon|^2\leq C_0\varepsilon^2.
\label{eq:vertical-bound-W}
\end{align}
In particular,
\begin{equation}\label{eq:vertical-small-W}
\|\partial_\zeta \W_\varepsilon\|_{L^2(\widetilde\Omega_h)}\leq C\varepsilon.
\end{equation}

The divergence constraint becomes
\begin{equation}\label{eq:div-W}
\div_x \W_{\varepsilon,\parallel}+\frac1\varepsilon\partial_\zeta \W_{\varepsilon,3}=0
\qquad\text{in }\widetilde\Omega_h,
\end{equation}
where $\W_{\varepsilon,\parallel}=(W_{\varepsilon,1},W_{\varepsilon,2})$. The boundary conditions become
\begin{equation}\label{eq:bc-lateral-magnetic}
\W_{\varepsilon,\parallel}\cdot \n=0
\qquad\text{on }\{(x,z) \, : \, x\in\partial\omega\text{ and }0\leq z\\leq h(x) \},
\end{equation}
and
\begin{equation}\label{eq:bc-top-bottom-magnetic}
\W_{\varepsilon,3}(x,0)=0,
\qquad
\W_{\varepsilon,3}(x,h(x))=\varepsilon\,\nabla h(x)\cdot \W_{\varepsilon,\parallel}(x,h(x)).
\end{equation}
Finally,
\begin{equation}\label{eq:curl-energy-rescaled-B}
\lambda_{1,\varepsilon}=\int_{\widetilde\Omega_h}\left(\big|\partial_{x_2}W_{\varepsilon,3}-\tfrac1\varepsilon\partial_\zeta W_{\varepsilon,2}\big|^2+\big|\tfrac1\varepsilon\partial_\zeta W_{\varepsilon,1}-\partial_{x_1}W_{\varepsilon,3}\big|^2+\big|\partial_{x_1}W_{\varepsilon,2}-\partial_{x_2}W_{\varepsilon,1}\big|^2 \right).\end{equation}

By \eqref{eq:norm-W}, \eqref{eq:horiz-bound-W} and \eqref{eq:vertical-bound-W}, the sequence $(\W_{\varepsilon,\parallel})$ is bounded in $H^1(\widetilde\Omega_h;\mathbb R^2)$. Up to extraction,
\begin{equation}\label{eq:H1-weak-W}
\W_{\varepsilon,\parallel}\rightharpoonup \W
\qquad\text{weakly in }H^1(\widetilde\Omega_h;\mathbb R^2)
\end{equation}
and strongly in $L^2(\widetilde\Omega_h;\mathbb R^2)$ for some $\W\in H^1(\widetilde\Omega_h;\mathbb R^2)$. Since \eqref{eq:vertical-small-W} implies $\partial_\zeta \W_{\varepsilon,\parallel}\to0$ strongly in $L^2$, the limit field is independent of $\zeta$. We still denote it by $\W=\W(x)$.

For the vertical component, integrating \eqref{eq:div-W} from $0$ to $\zeta$ and using $W_{\varepsilon,3}(x,0)=0$ gives
$$
W_{\varepsilon,3}(x,\zeta)=-\varepsilon\int_0^\zeta \div_x \W_{\varepsilon,\parallel}(x,s) \, ds.
$$
Using \eqref{eq:horiz-bound-W}, we obtain
\begin{equation}\label{eq:vertical-component-small}
\|W_{\varepsilon,3}\|_{L^2(\widetilde\Omega_h)}\leq C\varepsilon.
\end{equation}
Hence
\begin{equation}\label{eq:strong-W-vector}
\W_\varepsilon\to (\W,0)
\qquad\text{strongly in }L^2(\widetilde\Omega_h;\mathbb R^3).
\end{equation}

Define
\begin{equation}\label{eq:def-Feps-thin}
\F_\varepsilon(x):=\int_0^{h(x)}\W_{\varepsilon,\parallel}(x,\zeta)\,d\zeta.
\end{equation}
Since $\W_{\varepsilon,\parallel}\in H^1(\widetilde\Omega_h;\mathbb R^2)$ and $h\in W^{1,\infty}(\omega)$, standard differentiation under the integral sign on Lipschitz subgraphs implies that $\F_\varepsilon\in H^1(\omega;\mathbb R^2)$. More precisely, for $i,j\in\{1,2\}$,
$$
\partial_{x_j}F_{\varepsilon,i}=\int_0^{h(x)}\partial_{x_j}W_{\varepsilon,i}(x,\zeta)\,d\zeta+W_{\varepsilon,i}(x,h(x))\,\partial_{x_j}h(x)
\qquad\text{in }\mathcal D'(\omega),
$$
and the trace $W_{\varepsilon,i}(\cdot,h(\cdot))$ belongs to $L^2(\omega)$ by the trace theorem on the Lipschitz domain $\widetilde\Omega_h$.

We first prove the weak no-flux identity
$$
\int_\omega \F_\varepsilon\cdot\nabla\varphi\,dx=0
$$
for $\varphi\in C^\infty(\overline\omega)$, and then pass to
$H^1(\omega)$ by density. Define
$\widetilde\varphi(x,\zeta):=\varphi(x)$ for $(x,\zeta)\in\widetilde\Omega_h$.
By \eqref{eq:vertical-component-small}, the field $\varepsilon^{-1}W_{\varepsilon,3}$ is bounded in $L^2(\widetilde\Omega_h)$. Hence
$$
\mathbf Z_\varepsilon(x,\zeta):=\left(\widetilde\varphi(x,\zeta)\W_{\varepsilon,\parallel}(x,\zeta),\,
\varepsilon^{-1}\widetilde\varphi(x,\zeta)W_{\varepsilon,3}(x,\zeta)\right)
$$
belongs to $H(\operatorname{div};\widetilde\Omega_h)$. Moreover, using \eqref{eq:div-W},
$$
\operatorname{div}\mathbf Z_\varepsilon=\W_{\varepsilon,\parallel}\cdot\nabla\varphi+\widetilde\varphi\left(\operatorname{div}_x\W_{\varepsilon,\parallel}+\frac1\varepsilon\partial_\zeta W_{\varepsilon,3}\right)=\W_{\varepsilon,\parallel}\cdot\nabla\varphi.
$$
From the Green formula $$
\int_{\widetilde\Omega_h}\W_{\varepsilon,\parallel}\cdot\nabla\varphi\,dx\,d\zeta=\langle \mathbf Z_\varepsilon\cdot\widetilde\n,1\rangle_{\partial\widetilde\Omega_h}.
$$
We now compute the normal trace on each part of the boundary. On the lateral boundary, $\widetilde\n=(\n,0)$ and \eqref{eq:bc-lateral-magnetic} gives $\mathbf Z_\varepsilon\cdot\widetilde\n=0$.

On the bottom face $\{\zeta=0\}$, \eqref{eq:bc-top-bottom-magnetic} gives $\mathbf Z_\varepsilon\cdot\widetilde\n=-\varepsilon^{-1}\widetilde\varphi W_{\varepsilon,3}(x,0)=0$.
On the top graph $\{\zeta=h(x)\}$, the outward unit normal is
$$
\widetilde\n_{\mathrm{top}}=\frac{(-\nabla h(x),1)}{\sqrt{1+|\nabla h(x)|^2}},
$$
and \eqref{eq:bc-top-bottom-magnetic} gives
$$
\mathbf Z_\varepsilon\cdot\widetilde\n_{\mathrm{top}}=\frac{\widetilde\varphi}{\sqrt{1+|\nabla h|^2}}
\left(-\W_{\varepsilon,\parallel}(x,h(x))\cdot\nabla h(x)+\varepsilon^{-1}W_{\varepsilon,3}(x,h(x))
\right)=0.
$$
Thus
$$
\int_\omega \F_\varepsilon\cdot\nabla\varphi\,dx=\int_{\widetilde\Omega_h}\W_{\varepsilon,\parallel}\cdot\nabla\varphi\,dx\,d\zeta =0
\qquad\forall\varphi\in C^\infty(\overline\omega).
$$
Since $\F_\varepsilon\in L^2(\omega;\mathbb R^2)$, the identity extends by density to every $\varphi\in H^1(\omega)$:
$$
\int_\omega \F_\varepsilon\cdot\nabla\varphi\,dx=0 \qquad \forall\varphi\in H^1(\omega).
$$

Since $\W_{\varepsilon,\parallel}\to \W$ strongly in $L^2(\widetilde\Omega_h;\mathbb R^2)$ and $\W$ does not depend on $\zeta$, we have
\begin{align*}
\|\F_\varepsilon-h\W\|_{L^2(\omega)}^2
&\leq \overline h \int_\omega\int_0^{h(x)}
|\W_{\varepsilon,\parallel}(x,\zeta)-\W(x)|^2\,d\zeta\,dx
\\
&\leq \overline h\,\|\W_{\varepsilon,\parallel}-\W\|_{L^2(\widetilde\Omega_h)}^2
\longrightarrow 0.
\end{align*}
Hence $\F_\varepsilon\to h\W$ strongly in $L^2(\omega;\mathbb R^2)$.
Passing to the limit in the weak identity above yields
\begin{equation}\label{eq:limit-div-normal}
\int_\omega h\W\cdot\nabla\varphi\,dx=0 \qquad \forall\varphi\in H^1(\omega).
\end{equation}

Applying Lemma~\ref{lem:stream-function-thin} to the field $h\W$, we obtain a function $u\in H^1_0(\omega)$ such that
\begin{equation}\label{eq:stream-limit-thin}
h\W=\nabla^\perp u \qquad\text{in }\omega.
\end{equation}

Because $\W$ is independent of $\zeta$ and belongs to $H^1(\widetilde\Omega_h;\mathbb R^2)$, one has
$$
\int_\omega \left(|\W(x)|^2+|\nabla_x\W(x)|^2\right)\,dx
\le
\frac1{\underline h}\int_{\widetilde\Omega_h}\left(|\W|^2+|\nabla_x\W|^2\right)\,dx\,d\zeta
<\infty.
$$
Hence $\W\in H^1(\omega;\mathbb R^2)$. Since $h\in W^{2,\infty}(\omega)$, it follows that
$h\W\in H^1(\omega;\mathbb R^2)$.
Applying the second part of Lemma~\ref{lem:stream-function-thin}, we conclude that $u\in H^2(\omega)\cap H^1_0(\omega)$.

Now, dropping the first two nonnegative terms in \eqref{eq:curl-energy-rescaled-B} and using weak lower semicontinuity, we obtain
\begin{align}
\liminf_{\varepsilon\to0}\lambda_{1,\varepsilon}
&\geq \liminf_{\varepsilon\to0}
\int_{\widetilde\Omega_h}|\partial_{x_1}W_{\varepsilon,2}-\partial_{x_2}W_{\varepsilon,1}|^2\,dx\,d\zeta
\geq \int_{\widetilde\Omega_h}|\partial_{x_1}W_2-\partial_{x_2}W_1|^2\,dx\,d\zeta.
\label{eq:liminf-first}
\end{align}
Since $\W$ is independent of $\zeta$,
$$
\int_{\widetilde\Omega_h}|\partial_{x_1}W_2-\partial_{x_2}W_1|^2\,dx\,d\zeta =\int_\omega h(x)\,|\partial_{x_1}W_2-\partial_{x_2}W_1|^2\,dx.
$$
Using \eqref{eq:stream-limit-thin}, $\W=\frac1h\nabla^\perp u$, so that
\begin{equation}\label{eq:curl-identification-limit}
\partial_{x_1}W_2-\partial_{x_2}W_1 =\div\left(\frac1h\nabla u\right).
\end{equation}
On the other hand, \eqref{eq:strong-W-vector} and \eqref{eq:norm-W} yield
\begin{equation}\label{eq:denominator-limit-thin}
1=\lim_{\varepsilon\to0}\int_{\widetilde\Omega_h}|\W_\varepsilon|^2\,dx\,d\zeta =\int_\omega h(x)|\W(x)|^2\,dx =\int_\omega \frac1{h(x)}|\nabla u(x)|^2\,dx.
\end{equation}
Combining \eqref{eq:liminf-first}, \eqref{eq:curl-identification-limit} and \eqref{eq:denominator-limit-thin}, we infer
\begin{equation}\label{eq:liminf-final-thin}
\liminf_{\varepsilon\to0}\lambda_{1,\varepsilon}
\ge
\frac{\int_\omega h\left(\div\left(\tfrac1h\nabla u\right)\right)^2}
     {\int_\omega \tfrac1h|\nabla u|^2}
\end{equation}
\paragraph{Conclusion of the proof} From \eqref{Eq:Limsup}--\eqref{eq:liminf-final-thin}, we deduce that 
$$\lim_{\varepsilon\to 0^+}\lambda_{1,\varepsilon}=\inf_{u\in H^2(\omega)\cap H^1_0(\omega)\setminus\{0\}}\frac{\int_\omega h\left(\div\left(\tfrac1h\nabla u\right)\right)^2}
     {\int_\omega \tfrac1h|\nabla u|^2}=\lambda_1^{\mathrm{weight}}(h).$$ As the limit is independent of the chosen sequence, this concludes the proof.
     \end{proof}

 \subsection{Analysis of the dimensionally reduced problem: proof of Theorem \ref{Th:AnaDR}}
As a starting point, observe the following:

\begin{lemma}\label{cor:constant-thickness}
Let $\underline h>0$. If $h\equiv \underline h$ on $\omega$, then
$$
\lambda_1^{\mathrm{weight}}(h)=\lambda_1^{\mathrm{Dir}}(\omega),
$$
where we recall that $\lambda_1^{\mathrm{Dir}}(\omega)$ denotes the first Dirichlet eigenvalue of $-\Delta$ on $\omega$.
\end{lemma}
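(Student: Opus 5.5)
With $h\equiv\underline h$ constant, the weight factors out of the quotient \eqref{def:lambda-weight}:
$$
\frac{\int_\omega \underline h\,\big(\div(\tfrac1{\underline h}\nabla u)\big)^2}{\int_\omega \tfrac1{\underline h}|\nabla u|^2}
=\frac{\int_\omega (\Delta u)^2}{\int_\omega |\nabla u|^2}.
$$
So $\lambda_1^{\mathrm{weight}}(\underline h)$ is the infimum over $u\in H^2(\omega)\cap H^1_0(\omega)\setminus\{0\}$ of $\|\Delta u\|^2_{L^2}/\|\nabla u\|^2_{L^2}$. It does not depend on the value of $\underline h$, and it remains to show that this infimum equals $\lambda_1^{\mathrm{Dir}}(\omega)$. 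We prove the two inequalities separately.

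For the upper bound, take $u=\phi_1$, the first Dirichlet eigenfunction of $\omega$. Since $\omega$ is convex, $\phi_1\in H^2(\omega)\cap H^1_0(\omega)$ by $H^2$ regularity on convex domains (Grisvard); here $\omega$ is even assumed $\mathscr C^{1,1}$, which also suffices. Using $-\Delta\phi_1=\lambda_1^{\mathrm{Dir}}\phi_1$ and integrating by parts,
$$
\int_\omega (\Delta\phi_1)^2=(\lambda_1^{\mathrm{Dir}})^2\int_\omega\phi_1^2,
\qquad
\int_\omega|\nabla\phi_1|^2=\lambda_1^{\mathrm{Dir}}\int_\omega\phi_1^2 ,
$$
so the quotient equals $\lambda_1^{\mathrm{Dir}}(\omega)$.

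For the lower bound, fix $u\in H^2(\omega)\cap H^1_0(\omega)$ with $u\neq 0$. Since $u$ vanishes on the boundary, Green's formula gives $\int_\omega|\nabla u|^2=-\int_\omega u\,\Delta u$. Cauchy--Schwarz and the Poincaré inequality $\|u\|_{L^2}^2\le \|\nabla u\|_{L^2}^2/\lambda_1^{\mathrm{Dir}}$ then yield
$$
\|\nabla u\|^2_{L^2}\le \|u\|_{L^2}\|\Delta u\|_{L^2}\le (\lambda_1^{\mathrm{Dir}})^{-1/2}\|\nabla u\|_{L^2}\|\Delta u\|_{L^2}.
$$
Since $u\neq 0$ and $u\in H^1_0$, we have $\nabla u\neq0$, so we may divide by $\|\nabla u\|_{L^2}$ and square to obtain $\|\Delta u\|^2_{L^2}\ge\lambda_1^{\mathrm{Dir}}\|\nabla u\|^2_{L^2}$. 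Hence the quotient is at least $\lambda_1^{\mathrm{Dir}}(\omega)$ for every admissible $u$.

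Combining the two bounds gives $\lambda_1^{\mathrm{weight}}(\underline h)=\lambda_1^{\mathrm{Dir}}(\omega)$. The only technical point is the $H^2$ regularity of $\phi_1$, which is what makes it an admissible competitor; it follows from the convexity of $\omega$ (or from its $\mathscr C^{1,1}$ boundary).
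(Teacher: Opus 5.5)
Your proof is correct. It differs from the paper's only in how the lower bound is obtained.

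The paper expands $u=\sum_k a_k\varphi_k$ in the $L^2$-orthonormal Dirichlet eigenbasis. It then writes the quotient as $\sum_k(\lambda_k^{\mathrm{Dir}})^2a_k^2\big/\sum_k\lambda_k^{\mathrm{Dir}}a_k^2$, a weighted average of the Dirichlet eigenvalues, which is at least $\lambda_1^{\mathrm{Dir}}(\omega)$. You instead use Green's formula, Cauchy--Schwarz and the Poincar\'e inequality to get
$\|\nabla u\|_{L^2}^2\le\|u\|_{L^2}\|\Delta u\|_{L^2}\le(\lambda_1^{\mathrm{Dir}})^{-1/2}\|\nabla u\|_{L^2}\|\Delta u\|_{L^2}$.

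Your route is the more elementary of the two. It needs neither completeness of the eigenbasis nor the Parseval identities for $\nabla u$ and $\Delta u$; the paper, as written, even asserts convergence of the expansion in $H^2(\omega)$. You only use the variational characterisation of $\lambda_1^{\mathrm{Dir}}(\omega)$. The $H^2$-regularity of the first eigenfunction, coming from convexity or the $\mathscr C^{1,1}$ boundary, is needed in both versions for the upper bound. The spectral expansion has the advantage of displaying the whole structure of the quotient at once. Your argument also yields the equality case, via the equality cases of Cauchy--Schwarz and Poincar\'e.

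Your argument is exactly the $q_h\equiv 0$ (that is, $H_h=-\Delta$) instance of the Cauchy--Schwarz step the paper uses in the proof of Theorem~\ref{Th:AnaDR}, namely $\int|H_hw|^2\big/\int(H_hw)w\ \ge\ \int(H_hw)w\big/\int w^2$. So your approach would make this lemma an immediate special case of that later argument rather than a separate spectral computation.
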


\begin{proof}[Proof of Lemma \ref{cor:constant-thickness}]
If $h\equiv \underline h$, then \eqref{def:lambda-weight} reduces to
$$
\lambda^{\mathrm{weight}}_1(\underline h)=\inf_{u\in H^2(\omega)\cap H^1_0(\omega)\setminus\{0\}}
\frac{\int_\omega (\Delta u)^2\,dx}{\int_\omega |\nabla u|^2\,dx}.
$$

Let $(\varphi_k)_{k\ge1}$ be an $L^2(\omega)$-orthonormal basis of Dirichlet eigenfunctions:
$$
-\Delta\varphi_k=\lambda_k^{\mathrm{Dir}}(\omega)\varphi_k,
\qquad
\varphi_k\in H^2(\omega)\cap H^1_0(\omega).
$$
Any $u\in H^2(\omega)\cap H^1_0(\omega)$ admits an expansion
$u=\sum_{k\ge1} a_k\varphi_k$ in $H^2(\omega)$. Therefore
$$
\int_\omega |\nabla u|^2\,dx=\sum_{k\ge1}\lambda_k^{\mathrm{Dir}}(\omega)\,a_k^2
\quad \text{and}\quad 
\int_\omega (\Delta u)^2\,dx=\sum_{k\ge1}\left(\lambda_k^{\mathrm{Dir}}(\omega)\right)^2 a_k^2.
$$
Hence
$$
\frac{\int_\omega (\Delta u)^2\,dx}{\int_\omega |\nabla u|^2\,dx}=\frac{\sum_{k\ge1}\left(\lambda_k^{\mathrm{Dir}}(\omega)\right)^2 a_k^2}{\sum_{k\ge1}\lambda_k^{\mathrm{Dir}}(\omega)\,a_k^2} \geq \lambda_1^{\mathrm{Dir}}(\omega).
$$
Taking $u=\varphi_1$, we get equality. Therefore $\lambda^{\mathrm{weight}}_1(\underline h)=\lambda_1^{\mathrm{Dir}}(\omega)$.
\end{proof}

\begin{proof}[Proof of Theorem \ref{Th:AnaDR}]
For every constant admissible weight $h\equiv c$ with
$c\in[\underline h,\overline h]$, Lemma~\ref{cor:constant-thickness} gives
$\lambda^{\mathrm{weight}}_1(c)=\lambda_1^{\mathrm{Dir}}(\omega)$.
It therefore remains to prove that every $h\in\mathrm{Adm}(\omega) $ satisfies $\lambda_1^{\mathrm{weight}}(h)\geq \lambda_1^{\mathrm{Dir}}(\omega)$, with equality if, and only if, $h$ is constant.

\paragraph{Reduction to a second order operator}
Let $h\in\mathrm{Adm}(\omega) $ and set
$V:=\log h$.
Since $h\in W^{2,\infty}(\omega)$ and $h\geq \underline h>0$, one has
$V\in W^{2,\infty}(\omega)$. Define
$$
q_h:= \frac14|\nabla V|^2-\frac12\Delta V =\frac34\,\frac{|\nabla h|^2}{h^2}-\frac12\,\frac{\Delta h}{h},
$$
and let $H_h:=-\Delta+q_h$ be the Dirichlet Schr\"odinger operator on $\omega$.
Since $q_h\in L^\infty(\omega)$ and $\partial\omega$ is of class $C^{1,1}$,
$H_h$ is self-adjoint on $L^2(\omega)$ with compact resolvent, and $D(H_h)=H^2(\omega)\cap H^1_0(\omega)$.

We claim that
\begin{equation}\label{eq:weight-schrodinger-equivalence}
\lambda_1^{\mathrm{weight}}(h)=\lambda_1(H_h),
\end{equation}
where $\lambda_1(H_h)$ denotes the first Dirichlet eigenvalue of $H_h$.

Let $w\in H^2(\omega)\cap H^1_0(\omega)$ and set
$u:=\sqrt h w=e^{V/2}w$.
Since $\sqrt h\in W^{2,\infty}(\omega)$, one has
$u\in H^2(\omega)\cap H^1_0(\omega)$. Note that
$$
\nabla u=e^{V/2}\left(\nabla w+\frac12 w\nabla V\right)
\quad \Rightarrow \quad \frac1h\nabla u=e^{-V/2}\left(\nabla w+\frac12 w\nabla V\right).
$$
Therefore
\begin{align*}
\div\left(\frac1h\nabla u\right)
&=\div\left(e^{-V/2}\nabla w\right)
+\frac12\div\left(e^{-V/2}w\nabla V\right)
\\
&=
e^{-V/2}\left(\Delta w-\frac12\nabla V\cdot\nabla w\right)+\frac12 e^{-V/2}
\left(\nabla w\cdot\nabla V+w\Delta V-\frac12 w|\nabla V|^2\right)
\\
&=
e^{-V/2}
\left(\Delta w+\frac12 w\Delta V-\frac14 w|\nabla V|^2\right).
\end{align*}
Hence
$$
-\div\left(\frac1h\nabla u\right)=e^{-V/2}\left(-\Delta w+\left(\frac14|\nabla V|^2-\frac12\Delta V\right)w\right)=h^{-1/2}H_h w.
$$
It follows that
\begin{equation}\label{eq:numerator-transform}
\int_\omega
h\left(\div\left(\frac1h\nabla u\right)\right)^2dx=\int_\omega |H_h w|^2\,dx.
\end{equation}
Furthermore,
$$
\frac1h|\nabla u|^2=\left|\nabla w+\frac12 w\nabla V\right|^2=|\nabla w|^2+\frac14|\nabla V|^2w^2+w\nabla w\cdot\nabla V.
$$
Since $w\in H^1_0(\omega)$ and $V\in W^{2,\infty}(\omega)$, integration by parts yields
$$
\int_\omega w\nabla w\cdot\nabla V=\frac12\int_\omega \nabla(w^2)\cdot\nabla V=-\frac12\int_\omega w^2\Delta V
$$
whence
\begin{equation}\label{eq:denominator-transform}
\int_\omega \frac1h|\nabla u|^2=\int_\omega \left(|\nabla w|^2+q_h w^2\right)=\int_\omega (H_h w)\,w.
\end{equation}

Combining \eqref{eq:numerator-transform} and \eqref{eq:denominator-transform}, we obtain $$
\lambda_1^{\mathrm{weight}}(h)
=
\inf_{w\in H^2(\omega)\cap H^1_0(\omega)\setminus\{0\}}
\frac{\int_\omega |H_h w|^2}
     {\int_\omega (H_h w)\,w}.
$$
Moreover, for every nonzero $w\in H^2(\omega)\cap H^1_0(\omega)$, the corresponding
$u=\sqrt h\,w$ is nonzero. Hence, $\int_\omega \frac1h|\nabla u|^2>0$.
By \eqref{eq:denominator-transform}, this gives
$$
\int_\omega (H_h w)\,w\,dx>0
\qquad
\forall w\in H^2(\omega)\cap H^1_0(\omega)\setminus\{0\}.
$$

Now let $\psi_1$ be an $L^2$-normalized first eigenfunction of $H_h$, namely $H_h\psi_1=\lambda_1(H_h)\psi_1$.
Evaluating the previous quotient at $w=\psi_1$ yields
$$
\lambda_1^{\mathrm{weight}}(h)\leq \lambda_1(H_h).
$$
Conversely, by Cauchy--Schwarz,
$$
\left(\int_\omega (H_h w)\,w\,dx\right)^2
\leq
\left(\int_\omega |H_h w|^2\,dx\right)
\left(\int_\omega w^2\,dx\right),
$$
hence, dividing by $\int_\omega w^2\, dx$,
$$
\frac{\int_\omega |H_h w|^2\,dx}
     {\int_\omega (H_h w)\,w\,dx}
\geq
\frac{\int_\omega (H_h w)\,w}
     {\int_\omega w^2}.
$$
Taking the infimum over
$w\in H^2(\omega)\cap H^1_0(\omega)\setminus\{0\}$, we obtain
$$
\lambda_1^{\mathrm{weight}}(h)\geq \lambda_1(H_h).
$$
This proves \eqref{eq:weight-schrodinger-equivalence}.

\paragraph{Determining the optimal potential}
Let $h\in\mathrm{Adm}(\omega)$. Then $h\geq 0$ and, by concavity of $h$, $\Delta h\leq 0$ a.e.\ in $\omega$.
We infer
$$
q_h=\frac34\,\frac{|\nabla h|^2}{h^2}-\frac12\,\frac{\Delta h}{h}\geq 0
\qquad\text{a.e. in }\omega.
$$
Therefore, by the Rayleigh quotient for $H_h$,
\begin{align*}
\lambda_1^{\mathrm{weight}}(h)=\lambda_1(H_h)=
\inf_{w\in H^1_0(\omega)\setminus\{0\}}
\frac{\int_\omega \left(|\nabla w|^2+q_h w^2\right)}{\int_\omega w^2}\geq
\inf_{w\in H^1_0(\omega)\setminus\{0\}}\frac{\int_\omega |\nabla w|^2}{\int_\omega w^2}=\lambda_1^{\mathrm{Dir}}(\omega).
\end{align*}
This proves the lower bound $\lambda_1^{\mathrm{weight}}\left(h\right)\geq \lambda_1^{\mathrm{Dir}}\left(\omega\right)$ for every admissible thickness profile $h$. Since constant thickness profiles realise equality by Lemma~\ref{cor:constant-thickness}, they are minimisers.

Assume now that $\lambda_1^{\mathrm{weight}}(h)=\lambda_1^{\mathrm{Dir}}(\omega)$ for some $h\in \mathrm{Adm}(\omega)$
and let $\psi_1$ be an $L^2$-normalized first eigenfunction of $H_h$. By the maximum principle, $\psi_1>0$ in $\omega$. We deduce from the previous estimate that 
$$\int_\omega q_h\psi_1^2=0,$$ whence $q_h\equiv 0$ in $\omega$. As $-\Delta h\geq 0$, this entails $|\nabla h|^2\equiv 0$, whence $h$ is a constant. This concludes the proof.\end{proof}

\section{Optimality conditions and non-existence of smooth optimisers}\label{sec:smooth-obstruction}

Although we worked with the magnetic formulation throughout the main part of the paper, in this section, we will instead rely on the electric formulation \eqref{Eq:MaxElec}, which behaves in nicer ways with respect to pullbacks. We first recall the following semi-differentiability result from \cite{LambertiZaccaron2021}.

\begin{proposition}[Hadamard formula for a multiple first Maxwell eigenvalue]\label{prop:multiple-maxwell-hadamard}
Let $\Omega\subset\mathbb R^3$ be a bounded domain of class $C^3$, and assume that
$\lambda=\lambda_1^{\mathrm{Max}}(\Omega)$ has multiplicity $m$. Let
$$
\mathcal E_\lambda(\Omega):=\ker\left(\curl\curl-\lambda I\right)\cap\mathcal S(\Omega),
$$
and let $(\E_1,\ldots,\E_m)$ be an $L^2(\Omega)$-orthonormal basis of $\mathcal E_\lambda(\Omega)$.

Let $V\in W^{2,\infty}_c(\mathbb R^3;\mathbb R^3)$, $\Phi_t:=\operatorname{Id}+tV$, $\Omega_t:=\Phi_t(\Omega)$, and set $\psi:=V\cdot\n$ on $\partial\Omega$.
Then
$$
\frac{d}{dt}\lambda_1^{\mathrm{Max}}(\Omega_t)\Big|_{t=0+}=\min\operatorname{Spec}\mathcal M(\psi),
$$
where
$$
\mathcal M(\psi):=\left(\int_{\partial\Omega}\left(\lambda\,\E_i\cdot\E_j-\curl\E_i\cdot\curl\E_j\right)\psi\right)_{1\leq i,j\leq m}.
$$
\end{proposition}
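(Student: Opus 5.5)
The plan is to reduce the problem to the perturbation theory of a smooth family of quadratic forms on a \emph{fixed} Hilbert space. The natural tool is the covariant (curl-conforming) pullback
$$
\mathbf u\longmapsto \Phi_t^\ast\mathbf u:=D\Phi_t^{T}\,(\mathbf u\circ\Phi_t),
$$
which maps $\mathbf H_0(\curl;\Omega_t)$ isomorphically onto $\mathbf H_0(\curl;\Omega)$, since tangential traces are preserved. It satisfies the contravariant Piola identity $\curl(\Phi_t^\ast\mathbf u)=\det(D\Phi_t)\,D\Phi_t^{-1}(\curl\mathbf u)\circ\Phi_t$. With $A_t:=\det(D\Phi_t)\,D\Phi_t^{-1}D\Phi_t^{-T}$ and $C_t:=\det(D\Phi_t)^{-1}D\Phi_t^{T}D\Phi_t$, the Maxwell eigenvalues of $\Omega_t$ become the positive eigenvalues of the pencil
$$
a_t(\mathbf u,\mathbf v)=\int_\Omega C_t\curl\mathbf u\cdot\curl\mathbf v,\qquad b_t(\mathbf u,\mathbf v)=\int_\Omega A_t\mathbf u\cdot\mathbf v
$$
on $\mathbf H_0(\curl;\Omega)$. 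This is why the electric formulation is preferred here. The divergence constraint is not preserved by the pullback, but this does not matter. The kernel $\mathcal K:=\{\mathbf u\in\mathbf H_0(\curl;\Omega):\curl\mathbf u=0\}$ of $a_t$ is independent of $t$, because $C_t$ is invertible. Moreover, on the $b_t$-orthogonal complement $\mathcal K^{\perp_{b_t}}$, which corresponds exactly to divergence-free fields on $\Omega_t$, the embedding into $\mathbf L^2$ is compact. Hence
$$
\lambda_1^{\mathrm{Max}}(\Omega_t)=\min_{\mathbf u\notin\mathcal K}\frac{a_t(\mathbf u,\mathbf u)}{b_t\left(\mathbf u-P_t\mathbf u,\mathbf u-P_t\mathbf u\right)},
$$
where $P_t$ is the $b_t$-orthogonal projector onto $\mathcal K$. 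Both $A_t$ and $C_t$ are analytic in $t$ in $W^{1,\infty}$, and $P_t$ depends smoothly on $t$.

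Next I would invoke the Rellich--Kato theory for this analytic self-adjoint family, with compact resolvent on the quotient by $\mathcal K$. The $m$ eigenvalues issuing from $\lambda$ admit one-sided derivatives at $t=0$, and these are the eigenvalues of the matrix $\big(a'_0(\E_i,\E_j)-\lambda\,\frac{d}{dt}\big|_{0}b_t(\E_i-P_t\E_i,\E_j-P_t\E_j)\big)_{i,j}$. Since $\lambda$ is the \emph{first} eigenvalue, its right derivative is the smallest eigenvalue of this matrix.

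The projector term drops out at first order. Each $\E_i$ is $b_0$-orthogonal to $\mathcal K$, because $\lambda\,b_0(\E_i,\mathbf k)=a_0(\E_i,\mathbf k)=0$ for every $\mathbf k\in\mathcal K$. Hence $P_0\E_i=0$, and the cross terms involving $P_t'\E_i$ vanish. The matrix therefore reduces to
$$
\int_\Omega C_0'\curl\E_i\cdot\curl\E_j-\lambda\int_\Omega A_0'\E_i\cdot\E_j,
\qquad A_0'=(\div V)I-DV-DV^T,\quad C_0'=-(\div V)I+DV+DV^T.
$$

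The last and most delicate step converts this distributed expression into the boundary integral $\int_{\partial\Omega}(\lambda\E_i\cdot\E_j-\curl\E_i\cdot\curl\E_j)\psi$. Here the $\mathscr C^3$ regularity enters: it gives $\E_i\in H^2$, and hence all traces below make sense. I would integrate by parts using the identity $\div\big((\mathbf a\cdot\mathbf b)V-(V\cdot\nabla)\cdots\big)$, that is, by writing the integrand as the divergence of a tensor of the form $(\mathbf F\cdot\mathbf G)V-(\mathbf F\cdot V)\mathbf G-(\mathbf G\cdot V)\mathbf F$ plus volume terms. The volume terms then cancel thanks to $\curl\curl\E_i=\lambda\E_i$ and $\div\E_i=0$.

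On $\partial\Omega$, the trace conditions $\E_i\times\n=0$ (so $\E_i$ is normal) and $\curl\E_i\cdot\n=0$ (so $\curl\E_i$ is tangential) reduce the boundary terms to $(V\cdot\n)$ times $\lambda\E_i\cdot\E_j$ and $-\curl\E_i\cdot\curl\E_j$. The tangential part of $V$ contributes nothing, in line with the Hadamard structure theorem. This bookkeeping, especially checking that the tangential components of $V$ cancel and tracking the signs from $A_0'$ versus $C_0'$, is where I expect the main obstacle. If it becomes unwieldy, a safer route is to first take $V$ normal near $\partial\Omega$ and then use the structure theorem to remove that restriction.
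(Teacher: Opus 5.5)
Your proposal is correct, but it does not follow the paper's route: the paper gives no proof and simply recalls Proposition~\ref{prop:multiple-maxwell-hadamard} from \cite{LambertiZaccaron2021}. You derive it from scratch in four steps:
covariant pullback to the fixed space $\mathbf H_0(\curl;\Omega)$;
removal of the $t$-independent kernel $\mathcal K$ through the $b_t$-orthogonal projector, whose first-order contribution vanishes because $P_0\E_i=0$ and $P_0'\E_i\in\mathcal K$ is $b_0$-orthogonal to $\E_j$;
Rellich's analytic eigenvalue branches on the quotient;
and $\frac{d}{dt}\lambda_1^{\mathrm{Max}}(\Omega_t)\big|_{t=0^+}=\min_k\lambda_k'(0)$.

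The step you flagged as delicate closes cleanly. For divergence-free $H^1$ fields $\mathbf F,\mathbf G$,
\[
\bigl(-(\div V)I+DV+DV^T\bigr)\mathbf F\cdot\mathbf G=\div\bigl(-(\mathbf F\cdot\mathbf G)V+(\mathbf F\cdot V)\mathbf G+(\mathbf G\cdot V)\mathbf F\bigr)+V\cdot\bigl(\mathbf F\times\curl\mathbf G+\mathbf G\times\curl\mathbf F\bigr).
\]
Apply this to $(\curl\E_i,\curl\E_j)$ and, since $-\lambda A_0'=\lambda C_0'$, to $(\E_i,\E_j)$. The two volume remainders are $\lambda V\cdot(\curl\E_i\times\E_j+\curl\E_j\times\E_i)$ and $\lambda V\cdot(\E_i\times\curl\E_j+\E_j\times\curl\E_i)$, and they cancel exactly. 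On $\partial\Omega$, the conditions $\curl\E_i\cdot\n=0$ and $\E_i=(\E_i\cdot\n)\n$ turn the two fluxes into $-\curl\E_i\cdot\curl\E_j\,\psi$ and $\lambda\,\E_i\cdot\E_j\,\psi$ respectively. So the tangential part of $V$ drops out without the structure theorem. Moreover, $H^1$ regularity of $\E_i$ and $\curl\E_i$ already suffices for the traces; you do not need $H^2$.

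Comparing the two routes: the citation buys brevity. Yours buys a self-contained argument in which the passage from eigenvalue branches to the one-sided formula $\min\operatorname{Spec}\mathcal M(\psi)$ is explicit, and that one-sided formula is exactly what Lemma~\ref{lem:matrix-optimality-from-minimality} uses.

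One small caveat: $\mathcal K^{\perp_{b_t}}$ coincides with the divergence-free fields of $\Omega_t$ only when $\partial\Omega$ is connected. Otherwise harmonic Dirichlet fields lie in both $\mathcal S(\Omega_t)$ and $\mathcal K$. This is harmless in the convex setting of the paper.
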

Recall that $H$ denotes the mean curvature and $\kappa$ the Gauss curvature. Let $\Omega\subset\mathbb R^3$ be a bounded domain with $C^3$ boundary, and assume that the first Maxwell eigenvalue
$\lambda(\Omega)=\lambda_1^{\mathrm{Max}}(\Omega)$ is simple. Let
$$
\E\in\mathcal S(\Omega),
\qquad
\|\E\|_{L^2(\Omega)}=1,
$$
be an associated electric eigenfield, so that
\begin{equation}\label{eq:electric-eigenproblem-shape}
\curl\curl\E=\lambda(\Omega)\E, \qquad \div\E=0 \quad\text{in }\Omega, \qquad \E\times\n=0\quad\text{on }\partial\Omega.
\end{equation}
Let $\xi>0$ be the associated Maxwell frequency, so that $\lambda(\Omega)=\xi^2$. The corresponding magnetic field $\B$ is related to $\E$ by \eqref{eq:EB-correspondence}.

\begin{proposition}[Electric Hadamard formula]\label{prop:hadamard-shape-der}
Let $V\in W^{2,\infty}_c(\mathbb R^3;\mathbb R^3)$ and set $\Phi_t:=\operatorname{Id}+tV$, $\Omega_t:=\Phi_t(\Omega)$.

For $|t|$ small, let $\lambda(t)=\lambda_1^{\mathrm{Max}}(\Omega_t)$, and let $\E_t$ be the associated $L^2(\Omega_t)$-normalized electric eigenfield:
$$
\curl\curl\E_t=\lambda(t)\E_t, \qquad \div\E_t=0\quad\text{in }\Omega_t, \qquad \E_t\times\n_t=0
\quad\text{on }\partial\Omega_t.
$$
Then $\lambda$ is differentiable at $t=0$ and
\begin{equation}\label{eq:hadamard}
\lambda'(0)=\int_{\partial\Omega}\left(\lambda(\Omega)|\E|^2-|\curl\E|^2\right)(V\cdot\n).
\end{equation}
\end{proposition}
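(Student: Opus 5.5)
The plan is to reduce the statement to the semi-differentiability result of Proposition~\ref{prop:multiple-maxwell-hadamard} in the case of multiplicity $m=1$. When $\lambda(\Omega)$ is simple, the matrix $\mathcal M(\psi)$ is the $1\times1$ matrix
\[
\int_{\partial\Omega}\left(\lambda|\E|^2-|\curl\E|^2\right)\psi .
\]
This expression is linear in $\psi=V\cdot\n$, so the right derivative at $t=0^+$ equals exactly the quantity in \eqref{eq:hadamard}.

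For the left derivative, I replace $V$ by $-V$. Then $\Phi^{-V}_s=\operatorname{Id}-sV$, and for $s=-t>0$ this coincides with $\Phi_t$. Hence
\[
\frac{d}{dt}\lambda(\Omega_t)\Big|_{t=0^-}=-\frac{d}{ds}\lambda(\Phi^{-V}_s(\Omega))\Big|_{s=0^+}.
\]
Applying Proposition~\ref{prop:multiple-maxwell-hadamard} with the field $-V$, whose normal component is $-\psi$, and using linearity in $\psi$, the two one-sided derivatives agree. This gives differentiability at $t=0$ with the stated formula.

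Two points need a word.

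\textbf{Orientation of the eigenfield.} The normalised eigenfield is unique up to a sign (or a unimodular phase in the complex setting). The integrand $\lambda|\E|^2-|\curl\E|^2$ is invariant under this choice, so the formula does not depend on it.

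\textbf{Well-definedness of $\E_t$.} Simplicity persists for $|t|$ small, because eigenvalues depend continuously on $t$ under $W^{2,\infty}$ perturbations. This continuity is part of the framework of \cite{LambertiZaccaron2021}: pull back to $\Omega$ via $\Phi_t$ with the covariant Piola transform $\E\mapsto D\Phi_t^{\top}\E\circ\Phi_t$, which preserves $\mathbf H_0(\curl)$. Consequently $\lambda(t)$ is simple near $0$, and $\E_t$ is well defined up to sign.

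If one wants a self-contained derivation instead of invoking Proposition~\ref{prop:multiple-maxwell-hadamard}, the route is the standard one. Pull back by the covariant transform, which turns the problem into a family of generalized eigenproblems on the fixed space $\mathbf H_0(\curl;\Omega)$ with $W^{1,\infty}$ coefficient matrices depending analytically on $t$. Apply Kato perturbation theory to the simple eigenvalue. Differentiating the Rayleigh quotient then gives a volume expression for $\lambda'(0)$.

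The main obstacle in that route is the divergence constraint. The pulled-back constraint is $\div(\det D\Phi_t\, D\Phi_t^{-1}D\Phi_t^{-\top}\E)=0$, which depends on $t$. I would avoid it by working on all of $\mathbf H_0(\curl)$ and noting that gradient fields lie in the kernel of $\curl$. The positive spectrum is therefore unaffected, and the first positive eigenvalue is isolated from the infinite-dimensional zero eigenspace.

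The final step converts the volume integral into a boundary integral. This uses the $C^3$ regularity of $\partial\Omega$, which gives $\E\in H^2$ near the boundary, together with $\E\times\n=0$ and $\curl\E\cdot\n=0$. It should yield exactly $\int_{\partial\Omega}(\lambda|\E|^2-|\curl\E|^2)\,V\cdot\n$. The boundary simplifications are that $|\E|^2=(\E\cdot\n)^2$ on $\partial\Omega$, and that $\curl\E$ is tangential there.
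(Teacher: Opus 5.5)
Your argument is correct and is essentially the paper's: the paper simply cites the simple-eigenvalue Hadamard formula of \cite[formula~(1.8) and Theorem~4.5]{LambertiZaccaron2021}. You instead extract the formula from Proposition~\ref{prop:multiple-maxwell-hadamard}, which comes from the same reference, taking $m=1$; your $\pm V$ argument then correctly shows that the two one-sided derivatives coincide because $\mathcal M(-\psi)=-\mathcal M(\psi)$ is a scalar, so the optional Kato-perturbation sketch is not needed.
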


\begin{proof}
This is the Hadamard formula for a simple Maxwell eigenvalue written in electric normalization; see \cite[formula~(1.8) and Theorem~4.5]{LambertiZaccaron2021}. The assumptions of that result are satisfied here because $\partial\Omega$ is of class $C^{2,\alpha}$ and the eigenvalue is simple.
\end{proof}

\subsection{Optimality conditions on strictly convex boundary patches}

We say that a relatively open subset $\Gamma\subset\partial\Omega$ is a \emph{$C^{2,+}$ boundary patch} if $\Gamma$ is of class $C^2$ and both principal curvatures of $\partial\Omega$ are positive at every point of $\Gamma$.

\begin{lemma}[Localized two-sided convex deformations]\label{lem:localized-convex-variation}
Let $\Omega\subset\mathbb R^3$ be a bounded convex domain of class $C^3$, and let
$\Gamma\subset\partial\Omega$ be a strictly convex boundary patch. Then, for every
$\psi\in C_c^\infty(\Gamma)$, there exist $t_0>0$ and $V\in W^{2,\infty}_c(\mathbb R^3;\mathbb R^3)$ such that
$$
V\cdot\n=\psi \qquad\text{on }\partial\Omega,
$$
and, for every $|t|<t_0$, the map
$$
\Phi_t:=\operatorname{Id}+tV
$$
is a $C^1$-diffeomorphism of $\mathbb R^3$ and the domain $\Omega_t:=\Phi_t(\Omega)$ is bounded and convex.
\end{lemma}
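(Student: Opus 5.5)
The plan is to build $V$ as a normal extension of $\psi$ near $\Gamma$, and to get convexity of $\Omega_t$ by showing that the perturbed boundary keeps a positive definite second fundamental form on the support of $\psi$. Away from that support nothing moves.

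\textbf{Construction of $V$.} Let $K:=\operatorname{supp}\psi\subset\Gamma$, which is compact. Since $\partial\Omega$ is of class $C^3$, the unit normal $\n$ is $C^2$ on $\partial\Omega$. The signed distance $d$ is $C^3$ in a tubular neighbourhood $U_\eta=\{|d|<\eta\}$, and there the projection $\pi$ onto $\partial\Omega$ is $C^2$. Fix a cutoff $\chi\in C_c^\infty(-\eta,\eta)$ with $\chi\equiv1$ near $0$, and set
$$
V(x):=\chi(d(x))\,\psi(\pi(x))\,\nabla d(x).
$$
On $U_\eta$ we have $\nabla d=\n\circ\pi$, so $V$ is $C^2$ with compact support. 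In particular $V\in W^{2,\infty}_c$, and $V\cdot\n=\psi$ on $\partial\Omega$.

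\textbf{Diffeomorphism property.} Since $\|tDV\|_\infty<1$ for $|t|<t_0$, the map $\Phi_t$ is a $C^1$-diffeomorphism of $\mathbb R^3$. Indeed, it is a proper local diffeomorphism that is globally injective because $|t|\,\mathrm{Lip}(V)<1$. Boundedness of $\Omega_t$ is immediate.

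\textbf{Convexity.} This is the main step. The boundary is $\partial\Omega_t=\Phi_t(\partial\Omega)$, and it agrees with $\partial\Omega$ outside $K$.

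Choose a relatively open set $K\subset\Gamma'\Subset\Gamma$. By compactness and continuity of the principal curvatures, they are bounded below by some $k_0>0$ on $\overline{\Gamma'}$.

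Locally over $\Gamma'$, write $\partial\Omega$ as a graph $x\mapsto(x,g(x))$ with $D^2g\le -k_0' I$ in suitable coordinates. The perturbed surface $\Phi_t(\partial\Omega)\cap\Phi_t(\Gamma')$ is then, for small $t$, again a graph of some $g_t$. Because $V$ is $C^2$, we have $g_t\to g$ in $C^2$ uniformly, which follows from the implicit function theorem applied to $y\mapsto y+tV(y,g(y))$. Hence the second fundamental form of $\Phi_t(\Gamma')$ stays positive definite for $|t|<t_0$. This uniformity is the delicate point, and it is exactly where the $C^3$ regularity, giving $\n\in C^2$ and $V\in C^2$, is used. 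On $\partial\Omega\setminus K$ the boundary is unchanged, so there it keeps its convex-domain local structure.

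It remains to pass from local to global convexity. First, $\partial\Omega_t$ is a connected compact $C^2$ surface: it is $C^2$ on $\Phi_t(\Gamma')$, equals $\partial\Omega$ elsewhere, and these pieces overlap on $\Gamma'\setminus K$.

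Next, every point has a neighbourhood in which $\Omega_t$ lies on one side of its tangent plane. On $\Phi_t(\Gamma')$ this holds by strict positivity of the curvatures. Off $K$ the set $\Omega_t$ coincides locally with $\Omega$, which is convex.

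One can then invoke the Tietze--Nakajima theorem: a connected closed set that is locally convex is convex. I would apply it to $\overline{\Omega_t}$. Locally convex means that each boundary point has a neighbourhood $W$ with $\overline{\Omega_t}\cap W$ convex. Near points of $\Phi_t(K)$ this follows from the graph $g_t$ being concave on a convex coordinate ball. Near other points $\overline{\Omega_t}$ coincides locally with $\overline\Omega$, provided $W$ avoids $\Phi_t(K)$.

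An alternative is the Hadamard-type statement that a compact connected surface with nonnegative second fundamental form everywhere, and strictly positive somewhere, bounds a convex body. I would use this if the local-convexity bookkeeping near $\partial K$ becomes awkward. Under either route, $\Omega_t$ is convex for $|t|<t_0$.
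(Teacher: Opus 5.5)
Your proposal is correct and follows essentially the paper's proof. It uses the same normal extension $V=\chi(d)\,(\psi\circ\pi)\,\n\circ\pi$, the same $C^2$-stability of the positive principal curvatures on the support of $\psi$, and then a classical local-to-global convexity theorem. The only difference is that your main route applies Tietze--Nakajima to the closed set $\overline{\Omega_t}$, whereas the paper applies van Heijenoort--Sacksteder to the boundary surface, which is your stated fallback. For your route, near points of $\partial\Omega\setminus K$ you should take a ball $W$ with $\pi(W)\cap K=\varnothing$, so that $V\equiv0$ on $W$; injectivity of $\Phi_t$ then gives $\overline{\Omega_t}\cap W=\overline\Omega\cap W$. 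Merely requiring that $W$ avoid $\Phi_t(K)$ does not give this.
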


\begin{proof}
Since $\operatorname{supp}\psi$ is compactly contained in $\Gamma$, there exist a relatively open set
$U\subset\partial\Omega$ and a constant $c_0>0$ such that
$$
\operatorname{supp}\psi\subset U,
\qquad
\overline U\subset\Gamma,
\qquad
\kappa_1,\kappa_2\geq2c_0
\quad\text{on }U.
$$
We extend $\psi$ by zero to $\partial\Omega$, keeping the same notation.

Let $d$ denote the signed distance to $\partial\Omega$, chosen positive outside $\Omega$. Since
$\partial\Omega$ is of class $C^3$, there exists $\rho>0$ such that $d$ is of class $C^3$ on
$$
\mathcal N_\rho:=\left\{y\in\mathbb R^3:|d(y)|<\rho\right\},
$$
and every $y\in\mathcal N_\rho$ admits a unique nearest-point projection $\pi(y)\in\partial\Omega$. Moreover,
$$
y=\pi(y)+d(y)\n(\pi(y)).
$$
Choose $\chi\in C_c^\infty((-\rho,\rho))$ such that $\chi=1$ in a neighbourhood of zero, and define
$$
V(y):=
\begin{cases}
\chi(d(y))\,\psi(\pi(y))\,\n(\pi(y)),
& y\in\mathcal N_\rho,\\
0,
& y\notin\mathcal N_\rho.
\end{cases}
$$
Then
$$
V\in W^{2,\infty}_c(\mathbb R^3;\mathbb R^3), \qquad V=\psi\n \quad\text{on }\partial\Omega.
$$
In particular, $V\cdot\n=\psi$ on $\partial\Omega$.

For $|t|$ sufficiently small, $\Phi_t=\operatorname{Id}+tV$ is a $C^1$-diffeomorphism of
$\mathbb R^3$. Set $\Omega_t:=\Phi_t(\Omega)$. The restriction of $\Phi_t$ to the boundary is
$$
F_t(x):=\Phi_t(x)=x+t\psi(x)\n(x), \qquad x\in\partial\Omega.
$$
Note that $\partial\Omega_t=F_t(\partial\Omega)$.

The embeddings $F_t$ converge to the identity in the $C^2$ topology as $t\to0$. Since
$\psi$ vanishes in a neighbourhood of $\partial\Omega\setminus U$, one has $F_t=\operatorname{Id}$ on such a neighbourhood. Hence the principal curvatures of $\partial\Omega_t$ remain nonnegative there. On $U$, their continuous dependence on the embedding in the $C^2$ topology gives, after reducing $t_0$ if necessary,
$$
\kappa_1^t,\kappa_2^t\geq c_0\qquad\text{on }F_t(U)
$$
for every $|t|<t_0$.

Therefore $\partial\Omega_t$ is a closed embedded locally convex surface. By the global convexity theorem of van Heijenoort--Sacksteder
\cite{vanHeijenoort1952,Sacksteder1960}, it bounds a convex body. Hence $\Omega_t$ is convex for every $|t|<t_0$.
\end{proof}

\begin{corollary}[Electric first-order optimality condition]\label{prop:first-order}
Assume that $\Omega$ is a bounded convex $C^3$ local minimiser of $J_{\mathrm{Per}}$ in the class of convex domains, and that $\lambda(\Omega)=\lambda_1^{\mathrm{Max}}(\Omega)$ is simple. Set
$\gamma:=\lambda(\Omega)/\Per(\Omega)$.
Let $\Gamma\subset\partial\Omega$ be a strictly convex boundary patch. Then
\begin{equation}\label{eq:optimality-negative}
|\curl\E|^2-\lambda(\Omega)|\E|^2=\gamma H
\qquad\text{a.e. on }\Gamma.
\end{equation}
Since $\E\times\n=0$ on $\partial\Omega$, the boundary trace of $\E$ is purely normal. Writing
$\E=E_n\,\n$ on $\partial\Omega$, one may equivalently write
\begin{equation}\label{eq:optimality-normal-E}
|\curl\E|^2-\lambda(\Omega)E_n^2=\gamma H
\qquad\text{a.e. on }\Gamma.
\end{equation}
\end{corollary}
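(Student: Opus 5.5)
The plan is to combine the two-sided convex deformations of Lemma~\ref{lem:localized-convex-variation} with the electric Hadamard formula of Proposition~\ref{prop:hadamard-shape-der} and the classical first variation of perimeter, and then use the sign freedom in $t$ to turn the resulting inequality into an identity.

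\textbf{Step 1: admissible variations.} Fix $\psi\in C_c^\infty(\Gamma)$. Lemma~\ref{lem:localized-convex-variation} provides $V\in W^{2,\infty}_c$ with $V\cdot\n=\psi$ and $t_0>0$ such that $\Omega_t=\Phi_t(\Omega)$ is convex for all $|t|<t_0$. Since $\Phi_t\to\operatorname{Id}$, the domains $\Omega_t$ lie in any prescribed neighbourhood of $\Omega$ for $|t|$ small. Local minimality therefore gives
$$
J_{\mathrm{Per}}(\Omega_t)\geq J_{\mathrm{Per}}(\Omega)\quad\text{for }|t|\text{ small},
$$
in the same topology in which local minimality is understood.

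\textbf{Step 2: differentiate.} Because the eigenvalue is simple, Proposition~\ref{prop:hadamard-shape-der} shows that $t\mapsto\lambda(t)$ is differentiable at $0$, with
$$
\lambda'(0)=\int_{\partial\Omega}\left(\lambda|\E|^2-|\curl\E|^2\right)\psi.
$$
For a $C^2$ boundary, the first variation of perimeter is
$$
\frac{d}{dt}\Per(\Omega_t)\Big|_{t=0}=\int_{\partial\Omega}H\,\psi,
$$
where $H$ is the sum of the principal curvatures. This normalisation must be the one that makes \eqref{eq:optimality-negative} hold with the constant $\gamma$. The function $t\mapsto J_{\mathrm{Per}}(\Omega_t)$ is then differentiable at $0$. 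Since $0$ is an interior minimum of this function, its derivative vanishes:
$$
\Per(\Omega)\int_{\partial\Omega}\left(\lambda|\E|^2-|\curl\E|^2\right)\psi+\lambda\int_{\partial\Omega}H\psi=0.
$$
The two-sidedness of the deformation is what yields equality rather than an inequality.

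\textbf{Step 3: localise.} Dividing by $\Per(\Omega)$ gives
$$
\int_\Gamma\left(|\curl\E|^2-\lambda|\E|^2-\gamma' H\right)\psi=0
\quad\text{for all }\psi\in C_c^\infty(\Gamma),
$$
with $\gamma'=\lambda/\Per(\Omega)=\gamma$. Elliptic regularity for the Maxwell system on a $C^3$ domain gives $\E\in H^2$, so the boundary traces of $|\E|^2$ and $|\curl\E|^2$ lie in $L^1(\partial\Omega)$. The fundamental lemma of the calculus of variations then yields \eqref{eq:optimality-negative} almost everywhere on $\Gamma$.

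Finally, $\E\times\n=0$ forces the trace of $\E$ to be normal, so $|\E|^2=E_n^2$ on $\partial\Omega$, which gives \eqref{eq:optimality-normal-E}.

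\textbf{Main obstacle.} The delicate point is the trace regularity in Step 3. One must ensure that $\E$ and $\curl\E$ have traces regular enough for the Hadamard integrand to be a genuine $L^1$ function rather than a distribution. I would invoke the $H^2$ regularity of Maxwell eigenfields on $C^{2}$ or $C^3$ domains, via the Gaffney-type estimates already cited, together with the fact that $\curl\E$ is a magnetic eigenfield and so enjoys the same regularity. Beyond that, only the sign convention for $H$ needs checking for consistency.
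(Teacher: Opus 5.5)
Your proposal is correct and follows the paper's proof essentially step for step. The ingredients are the same: the two-sided convex deformations of Lemma~\ref{lem:localized-convex-variation}, differentiability at $t=0$ from simplicity and Proposition~\ref{prop:hadamard-shape-der}, the perimeter variation $\int_{\partial\Omega}H\psi$ (with $H$ the sum of principal curvatures, as in the paper), vanishing of the derivative at the interior minimum, and localisation using that $\psi\in C_c^\infty(\Gamma)$ is arbitrary. Your extra remark on trace integrability covers a point the paper leaves implicit, and $H^1$ regularity of $\E$ and of $\curl\E=\sqrt{\lambda}\,\widehat{\B}$ (the Gaffney estimate on convex domains) already suffices there, since $H^{1/2}(\partial\Omega)\hookrightarrow L^4(\partial\Omega)$.
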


\begin{proof}
Let $\psi\in C_c^\infty(\Gamma)$. By Lemma~\ref{lem:localized-convex-variation}, there exist
$V\in W^{2,\infty}_c(\mathbb R^3;\mathbb R^3)$ and $t_0>0$ such that
$V\cdot\n=\psi$ on $\partial\Omega$, and the domains $\Omega_t:=(\operatorname{Id}+tV)(\Omega)$ are convex for every $|t|<t_0$.

Since $\Omega$ is a local minimiser of $J_{\mathrm{Per}}$ in the convex class,
$$
J_{\mathrm{Per}}(\Omega_t)\geq J_{\mathrm{Per}}(\Omega) \qquad \text{for every }|t|<t_0.
$$
The eigenvalue is simple, so $t\mapsto J_{\mathrm{Per}}(\Omega_t)$ is differentiable at zero. Consequently,
$$
\frac{d}{dt}J_{\mathrm{Per}}(\Omega_t)\Big|_{t=0}=0.
$$
Proposition~\ref{prop:hadamard-shape-der} and the first variation of the perimeter now apply directly to the deformation generated by $V$. Therefore
\begin{align*}
0&=\frac{d}{dt}J_{\mathrm{Per}}(\Omega_t)\Big|_{t=0}=\Per(\Omega)\int_{\partial\Omega} \left(\lambda(\Omega)|\E|^2-|\curl\E|^2\right)\psi+\lambda(\Omega)\int_{\partial\Omega} H\psi.
\end{align*}
Since $\psi$ is supported in $\Gamma$, this becomes
$$
\int_\Gamma
\left[\Per(\Omega)\left(\lambda(\Omega)|\E|^2-|\curl\E|^2\right)+\lambda(\Omega)H\right]\psi=0.
$$
Because $\psi\in C_c^\infty(\Gamma)$ is arbitrary, we infer that
$$
\Per(\Omega)\left(\lambda(\Omega)|\E|^2-|\curl\E|^2\right)+\lambda(\Omega)H=0
\qquad\text{a.e. on }\Gamma.
$$
This is exactly \eqref{eq:optimality-negative}. The reformulation with $E_n$ follows from the fact that $\E\times\n=0$ on $\partial\Omega$.
\end{proof}
\begin{proposition}[Intrinsic magnetic boundary reformulation]\label{prop:intrinsic-boundary-reformulation}
Under the assumptions of Corollary~\ref{prop:first-order}, let
$\Gamma\subset\partial\Omega$ be a strictly convex boundary patch. Then
$\B\cdot\n=0$ on $\Gamma$, so that $\B=\B_\Gamma$ there, and
\begin{equation}\label{eq:intrinsic-magnetic-boundary}
\lambda(\Omega)|\B_\Gamma|^2-\left|\operatorname{div}_\Gamma(\n\times\B_\Gamma)\right|^2=\frac{\gamma}{\mu^2}H \qquad\text{a.e. on }\Gamma.
\end{equation}
\end{proposition}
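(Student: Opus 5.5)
The plan is to convert the electric optimality condition \eqref{eq:optimality-normal-E} of Corollary~\ref{prop:first-order} into magnetic quantities using the correspondence \eqref{eq:EB-correspondence}. The only non-algebraic input is a surface-calculus identity expressing the normal component of a curl through the tangential trace of the field.

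\emph{Step 1 (normal trace of $\B$).} Take $\B:=\frac{1}{i\xi\mu}\curl\E$ as in \eqref{eq:EB-correspondence}. The footnote in Section~\ref{subsec:first-maxwell} shows that $(\curl\E)\cdot\n=0$ in the sense of normal traces whenever $\E\in\mathbf H_0(\curl;\Omega)$. Hence $\B\cdot\n=0$ on $\partial\Omega$, and in particular on $\Gamma$.

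To read this pointwise, we need boundary regularity. Since $\partial\Omega$ is $C^3$, elliptic regularity for the Maxwell system (the div--curl system with tangential or normal boundary conditions) gives $\E$ and $\B$ in $H^2(\Omega)$. Their traces then lie in $H^{3/2}(\partial\Omega)$, and the tangential traces of $\curl\E$ and $\curl\B$ are well defined a.e. on $\Gamma$. So $\B\cdot\n=0$ holds a.e., and $\B=\B_\Gamma$ is tangential there. Since the eigenvalue is simple and the operator is real, $\E$ may be chosen real-valued; then $\B$ is purely imaginary, and only $|\B|^2$ and $|\curl\B|^2$ enter, which are insensitive to this.

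\emph{Step 2 (surface identity).} This is the step I expect to need the most care. I would establish that, for a field $\U$ regular up to the boundary,
$$(\curl\U)\cdot\n=\operatorname{div}_\Gamma(\U_\Gamma\times\n)=-\operatorname{div}_\Gamma(\n\times\U_\Gamma)\qquad\text{on }\Gamma,$$
where $\U_\Gamma$ is the tangential part of $\U$. The route is to test against $\varphi\in C^\infty_c(\Gamma)$, extended to $\tilde\varphi$ near $\partial\Omega$. Using $\div\curl\U=0$ and the Green formula for $\curl$,
$$\int_{\partial\Omega}(\curl\U)\cdot\n\,\varphi=\int_\Omega\curl\U\cdot\nabla\tilde\varphi=\int_{\partial\Omega}(\n\times\U)\cdot\nabla\tilde\varphi .$$
Only the tangential part of $\nabla\tilde\varphi$, namely $\nabla_\Gamma\varphi$, survives in the last integral, because $\n\times\U$ is tangential. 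The surface divergence theorem on the closed surface then gives the identity. Equivalently, one can use the classical local computation in normal coordinates. The sign is irrelevant, since only the square appears.

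\emph{Step 3 (rewriting the optimality condition).} From $\curl\E=i\xi\mu\B$ and $\lambda=\xi^2$ we get $|\curl\E|^2=\lambda\mu^2|\B_\Gamma|^2$ on $\Gamma$. From $\E=-\frac{1}{i\xi\varepsilon}\curl\B$ we get
$$E_n=\E\cdot\n=-\frac{1}{i\xi\varepsilon}(\curl\B)\cdot\n .$$
By Step 2 applied to $\U=\B$, this gives
$$E_n^2=\frac{|\operatorname{div}_\Gamma(\n\times\B_\Gamma)|^2}{\xi^2\varepsilon^2}.$$
Therefore $\lambda E_n^2=\varepsilon^{-2}|\operatorname{div}_\Gamma(\n\times\B_\Gamma)|^2=\mu^2|\operatorname{div}_\Gamma(\n\times\B_\Gamma)|^2$, using $\varepsilon\mu=1$. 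Substituting into \eqref{eq:optimality-normal-E} yields
$$\mu^2\Bigl(\lambda|\B_\Gamma|^2-|\operatorname{div}_\Gamma(\n\times\B_\Gamma)|^2\Bigr)=\gamma H$$
a.e. on $\Gamma$. Dividing by $\mu^2$ gives \eqref{eq:intrinsic-magnetic-boundary}.

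Apart from Step 2, the remaining delicate point is justifying the pointwise traces via the $C^3$ regularity of $\partial\Omega$.
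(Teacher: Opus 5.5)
Your proposal is correct and follows essentially the same route as the paper: you rewrite the electric optimality condition using $\curl\E=i\xi\mu\B$, $\curl\B=-i\xi\varepsilon\E$ and $\varepsilon\mu=1$, together with the surface identity $(\curl\B)\cdot\n=-\operatorname{div}_\Gamma(\n\times\B_\Gamma)$ and the fact that the boundary trace of $\E$ (equivalently of $\curl\B$) is normal. The paper simply asserts that surface identity, whereas you derive it from the Green formula and the surface divergence theorem; you also handle the real/imaginary normalisation of $\E$ and $\B$ and the trace regularity, which the paper leaves implicit.
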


\begin{proof}
Since $\B\cdot\n=0$ on $\Gamma$, one has $\B=\B_\Gamma$ on $\Gamma$. Moreover,
$\curl\B=-i\xi\varepsilon\E$, and the perfect-conductor boundary condition
$\E\times\n=0$ shows that $\curl\B$ is normal to $\Gamma$.

For every sufficiently regular tangential field $\B_\Gamma$, the surface identity
$$
(\curl\B)\cdot\n=-\operatorname{div}_\Gamma(\n\times\B_\Gamma)
$$
holds on $\Gamma$. Therefore
$$
|\curl\B|^2=\left|\operatorname{div}_\Gamma(\n\times\B_\Gamma)\right|^2\qquad\text{on }\Gamma.
$$

Using $\curl\E=i\xi\mu\B$, $\curl\B=-i\xi\varepsilon\E$, $\varepsilon\mu=1$, we obtain
$$
|\curl\E|^2=\lambda(\Omega)\mu^2|\B|^2,
\qquad
\lambda(\Omega)|\E|^2=\mu^2|\curl\B|^2.
$$
Hence \eqref{eq:optimality-negative} is equivalent to
$$
\lambda(\Omega)|\B|^2-|\curl\B|^2
=
\frac{\gamma}{\mu^2}H
\qquad\text{a.e. on }\Gamma.
$$
Substituting the intrinsic expression of $|\curl\B|^2$ yields
\eqref{eq:intrinsic-magnetic-boundary}.
\end{proof}
\subsection{First-order condition in the multiple eigenvalue case}
We now pass from the simple-eigenvalue identity to the only multiple-eigenvalue statement needed later. The point is that two-sided localized convex variations force the whole shape-derivative matrix to be scalar.
\begin{lemma}\label{lem:matrix-optimality-from-minimality}
Let $\Omega^\ast\subset\mathbb R^3$ be a bounded convex domain with $C^3$ boundary, and assume that $\Omega^\ast$ is a local minimiser of $\Omega \mapsto J_{\mathrm{Per}}(\Omega):=\Per(\Omega)\,\lambda_1^{\mathrm{Max}}(\Omega)$ in the class $\mathcal C$. Set
$\lambda:=\lambda_1^{\mathrm{Max}}(\Omega^\ast)$, $\gamma:=\lambda/\Per(\Omega^\ast)$.
Assume that $\lambda$ has multiplicity $m\geq 1$, and let
$$
\mathcal E_1:=\ker\left(\curl\curl-\lambda I\right)\cap\mathcal S(\Omega^\ast)
$$
be the associated electric eigenspace. Since the operator has real coefficients, choose a real-valued $L^2(\Omega^\ast)$-orthonormal basis $(\E_1,\dots,\E_m)$ of $\mathcal E_1$.

Let $\Gamma\subset\partial\Omega^\ast$ be a strictly convex boundary patch. For every $\psi\in C_c^\infty(\Gamma)$, define the symmetric matrix
\begin{equation}\label{eq:def-matrix-semiderivative}
\mathcal M(\psi)
:=\left(\int_{\partial\Omega^\ast}
\left(\lambda\,\E_i\cdot\E_j-\curl\E_i\cdot\curl\E_j\right)\psi\right)_{1\leq i,j\leq m}.
\end{equation}

Furthermore, for all $1\leq i,j\leq m$,
\begin{equation}\label{eq:matrix-optimality-electric-negative}
\lambda\,\E_i\cdot\E_j-\curl\E_i\cdot\curl\E_j=-\,\gamma H\,\delta_{ij} \qquad\text{a.e. on }\Gamma.
\end{equation}
\end{lemma}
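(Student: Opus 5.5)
The plan is to combine the two-sided convex deformations of Lemma~\ref{lem:localized-convex-variation} with the semi-differentiability result of Proposition~\ref{prop:multiple-maxwell-hadamard}. First the matrix $\mathcal M(\psi)$ must be shown to be a scalar multiple of the identity; then the pointwise identity \eqref{eq:matrix-optimality-electric-negative} follows by a localisation argument.

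Fix $\psi\in C_c^\infty(\Gamma)$. Lemma~\ref{lem:localized-convex-variation} provides $V$ with $V\cdot\n=\psi$ and convex domains $\Omega_t=(\operatorname{Id}+tV)(\Omega^\ast)$ for $|t|<t_0$. The same lemma applied to $-\psi$ gives the deformation $\operatorname{Id}-tV$, so both one-sided derivatives are available. The perimeter is differentiable along these deformations, with derivative $\int_{\partial\Omega^\ast}H\psi$. The eigenvalue has a right derivative $\min\operatorname{Spec}\mathcal M(\psi)$ by Proposition~\ref{prop:multiple-maxwell-hadamard}. Local minimality gives $\frac{d}{dt}J_{\mathrm{Per}}(\Omega_t)|_{t=0^+}\geq0$, that is,
$$
\Per(\Omega^\ast)\min\operatorname{Spec}\mathcal M(\psi)+\lambda\int_{\partial\Omega^\ast}H\psi\geq0 .
$$
Dividing by $\Per(\Omega^\ast)$ and setting $c(\psi):=\gamma\int H\psi$, this reads $\min\operatorname{Spec}\mathcal M(\psi)\geq -c(\psi)$. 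Applying the same inequality to $-\psi$ and using linearity, $\mathcal M(-\psi)=-\mathcal M(\psi)$, gives $-\max\operatorname{Spec}\mathcal M(\psi)\geq c(\psi)$, i.e.\ $\max\operatorname{Spec}\mathcal M(\psi)\leq -c(\psi)$. Together these force every eigenvalue of the symmetric matrix $\mathcal M(\psi)$ to equal $-c(\psi)$, so
$$
\mathcal M(\psi)=-\gamma\Big(\int_{\partial\Omega^\ast}H\psi\Big)I_m .
$$

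Reading this identity entrywise, each function $f_{ij}:=\lambda\E_i\cdot\E_j-\curl\E_i\cdot\curl\E_j+\gamma H\delta_{ij}$ satisfies $\int_\Gamma f_{ij}\psi=0$ for all $\psi\in C_c^\infty(\Gamma)$. It remains to check that $f_{ij}$ is a genuine function on $\Gamma$. Since $\partial\Omega^\ast$ is $C^3$, elliptic regularity for the Maxwell system gives $\E_i\in H^2$ or better up to the boundary, so the traces of $\E_i$ and $\curl\E_i$ lie at least in $L^2(\partial\Omega^\ast)$. Hence each $f_{ij}$ is locally integrable on $\Gamma$, and the fundamental lemma of the calculus of variations yields $f_{ij}=0$ a.e.\ on $\Gamma$, which is \eqref{eq:matrix-optimality-electric-negative}.

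The main obstacle is the applicability of Proposition~\ref{prop:multiple-maxwell-hadamard} in the one-sided form used above. One must make sure the directional derivative of $\lambda_1^{\mathrm{Max}}$ along $\operatorname{Id}+tV$ is exactly $\min\operatorname{Spec}\mathcal M(\psi)$ for both signs of $\psi$, and that the product rule holds for $J_{\mathrm{Per}}$ when only right derivatives exist. The latter is fine because the perimeter is differentiable, so the right derivative of the product is $\Per(\Omega^\ast)$ times the right derivative of $\lambda_1^{\mathrm{Max}}$ plus $\lambda$ times the derivative of the perimeter. The eigenvalue matrix algebra then closes the argument. The scalar-matrix conclusion uses only that $\min\operatorname{Spec}(-A)=-\max\operatorname{Spec}(A)$.
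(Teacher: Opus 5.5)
Your proposal is correct and follows the paper's proof essentially step for step. The steps are: two-sided localized convex variations, then the one-sided Hadamard formula giving $\min\operatorname{Spec}\mathcal M(\psi)\geq-\gamma\int_{\partial\Omega^\ast}H\psi$, then the sign flip $\psi\mapsto-\psi$ forcing $\mathcal M(\psi)=-\gamma\left(\int_{\partial\Omega^\ast}H\psi\right)I_m$, and finally localization in $\psi$. Your extra remarks on the product rule for right derivatives and on the $L^2$ boundary traces (needed for the fundamental lemma) only make explicit points the paper leaves implicit.
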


\begin{proof}
Fix $\psi\in C_c^\infty(\Gamma)$. By Lemma~\ref{lem:localized-convex-variation}, there exist $V\in W^{2,\infty}_c(\mathbb R^3;\mathbb R^3)$ and $t_0>0$ such that $V\cdot\n=\psi$ on $\partial\Omega^\ast$, and
$$
\Omega_t:=(\operatorname{Id}+tV)(\Omega^\ast)
$$
is convex for every $|t|<t_0$.

By Proposition~\ref{prop:multiple-maxwell-hadamard},
\begin{equation}\label{eq:directional-derivative-matrix}
\frac{d}{dt}\lambda_1^{\mathrm{Max}}(\Omega_t)\Big|_{t=0+}=\min\operatorname{Spec}\mathcal M(\psi).
\end{equation}

Let $P(t):=\Per(\Omega_t)$. Recall that
\begin{equation}\label{eq:perimeter-variation-lemma}
P'(0)=\int_{\partial\Omega^\ast} H\psi.
\end{equation}
Since $\Omega^\ast$ is a local minimiser of $J_{\mathrm{Per}}$, one has
$J_{\mathrm{Per}}(\Omega_t)\geq J_{\mathrm{Per}}(\Omega^\ast)$ for all sufficiently small $|t|$.
Therefore
$$
\frac{d}{dt}J_{\mathrm{Per}}(\Omega_t)\Big|_{t=0+}\geq 0.
$$
Using \eqref{eq:directional-derivative-matrix} and \eqref{eq:perimeter-variation-lemma}, we obtain
\begin{align*}
0
&\leq
\frac{d}{dt}\left(P(t)\lambda_1^{\mathrm{Max}}(\Omega_t)\right)\Big|_{t=0+}=
\Per(\Omega^\ast)\,\min\operatorname{Spec}\mathcal M(\psi)
+
\lambda\int_{\partial\Omega^\ast} H\psi.
\end{align*}
Equivalently,
\begin{equation}\label{eq:min-spec-lower}
\min\operatorname{Spec}\mathcal M(\psi)
\geq
-\,\gamma\int_{\partial\Omega^\ast} H\psi.
\end{equation}

Applying the same argument to $-\psi$, and using the linearity relation
$\mathcal M(-\psi)=-\,\mathcal M(\psi)$, we obtain
\begin{equation}\label{eq:max-spec-upper}
\min\operatorname{Spec}\left(-\,\mathcal M(\psi)\right)
\geq
\gamma\int_{\partial\Omega^\ast} H\psi
\quad \Longleftrightarrow \quad \max\operatorname{Spec}\mathcal M(\psi)
\leq
-\,\gamma\int_{\partial\Omega^\ast} H\psi.
\end{equation}

Combining \eqref{eq:min-spec-lower} and \eqref{eq:max-spec-upper}, we conclude that all eigenvalues of $\mathcal M(\psi)$ are equal to $-\,\gamma\int_{\partial\Omega^\ast} H\psi$. Therefore
\begin{equation}\label{eq:matrix-is-scalar}
\mathcal M(\psi)
=
-\,\gamma\left(\int_{\partial\Omega^\ast} H\psi\right)I_m
\qquad
\forall \psi\in C_c^\infty(\Gamma).
\end{equation}

By the definition of $\mathcal M(\psi)$, identity \eqref{eq:matrix-is-scalar} means that, for all $1\leq i,j\leq m$ and all $\psi\in C_c^\infty(\Gamma)$,
$$
\int_{\partial\Omega^\ast}
\left(
\lambda\,\E_i\cdot\E_j-\curl\E_i\cdot\curl\E_j
\right)\psi
=
-\,\gamma\,\delta_{ij}\int_{\partial\Omega^\ast} H\psi.
$$
Since $\psi$ is arbitrary, this yields \eqref{eq:matrix-optimality-electric-negative}.
\end{proof}

\begin{remark}[Real normalized magnetic matrix reformulation]\label{rem:matrix-optimality-magnetic}
Let $\Gamma\subset\partial\Omega^\ast$ be a strictly convex boundary patch, and let
$(\E_1,\dots,\E_m)$ be a real-valued $L^2(\Omega^\ast)$-orthonormal basis of the electric eigenspace
of Lemma~\ref{lem:matrix-optimality-from-minimality}. Such a choice is possible because the operator
$\curl\curl$ with perfect-conductor boundary condition has real coefficients.

For each $i=1,\dots,m$, define
$$
\widehat{\B}_i:=\frac{1}{\sqrt{\lambda}}\curl \E_i.
$$
Then each $\widehat{\B}_i$ is real-valued, belongs to the first magnetic eigenspace, and
$(\widehat{\B}_1,\dots,\widehat{\B}_m)$ is $L^2(\Omega^\ast)$-orthonormal. Moreover,
$$
\curl\E_i\cdot\curl\E_j=\lambda\,\widehat{\B}_i\cdot\widehat{\B}_j,
\qquad
\curl\widehat{\B}_i\cdot\curl\widehat{\B}_j=\lambda\,\E_i\cdot\E_j.
$$
Therefore \eqref{eq:matrix-optimality-electric-negative} is equivalent to
\begin{equation}\label{eq:matrix-optimality-magnetic}
\lambda\,\widehat{\B}_i\cdot\widehat{\B}_j-\curl\widehat{\B}_i\cdot\curl\widehat{\B}_j=\gamma H\,\delta_{ij}
\qquad\text{a.e. on }\Gamma.
\end{equation}
\end{remark}

\subsection{Simplicity under positive mean curvature}

The next statement is the only simplicity result needed below. The proof uses only the positivity of the scalar mean curvature, the global magnetic boundary identity, and the topology of the boundary.

\begin{proposition}\label{prop:simplicity-convex-minimiser}
Let $\Omega^\ast\subset\mathbb R^3$ be a bounded convex domain of class $C^2$, and assume that
$H>0$ on $\partial\Omega^\ast$.
Assume moreover that, for one $L^2(\Omega^\ast)$-orthonormal basis $(\widehat{\B}_1,\dots,\widehat{\B}_m)$ of the first normalized magnetic eigenspace, each $\widehat{\B}_i$ and each $\curl\widehat{\B}_i$ admit a continuous boundary trace on $\partial\Omega^\ast$, and the magnetic matrix identity
\begin{equation}\label{eq:matrix-optimality-magnetic-global-pointwise}
\lambda\,\widehat{\B}_i\cdot\widehat{\B}_j-\curl\widehat{\B}_i\cdot\curl\widehat{\B}_j=\gamma H\,\delta_{ij}
\qquad\text{on }\partial\Omega^\ast
\end{equation}
holds pointwise for all $1\leq i,j\leq m$, where
$\lambda:=\lambda_1^{\mathrm{Max}}(\Omega^\ast)$ and $\gamma:=\lambda/\Per(\Omega^\ast)$.

Then the first Maxwell eigenvalue $\lambda_1^{\mathrm{Max}}(\Omega^\ast)$ is simple.
\end{proposition}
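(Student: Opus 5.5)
The plan is to read the identity \eqref{eq:matrix-optimality-magnetic-global-pointwise} pointwise as a statement about vectors in the two-dimensional tangent plane, and then use the topology of $\partial\Omega^\ast$. We argue by contradiction and assume $m\geq 2$; in fact the argument below only uses a single basis element.

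\textbf{Step 1: boundary structure of the eigenfields.} Fix $x\in\partial\Omega^\ast$. Each $\widehat{\B}_i$ satisfies $\widehat{\B}_i\cdot\n=0$, so $\widehat{\B}_i(x)\in T_x\partial\Omega^\ast$. Moreover $\curl\widehat{\B}_i=\sqrt{\lambda}\,\E_i$ with $\E_i\times\n=0$, so $\curl\widehat{\B}_i(x)=c_i(x)\n(x)$ for a scalar $c_i(x)$. Both facts hold pointwise by the assumed continuity of the traces. Consequently $\curl\widehat{\B}_i\cdot\curl\widehat{\B}_j=c_ic_j$, and \eqref{eq:matrix-optimality-magnetic-global-pointwise} becomes the matrix identity
$$
\lambda\,G(x)=\gamma H(x)\,I_m+c(x)c(x)^{T},
\qquad G_{ij}(x):=\widehat{\B}_i(x)\cdot\widehat{\B}_j(x),\quad c:=(c_1,\dots,c_m)^T .
$$

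\textbf{Step 2: linear algebra.} Since $H>0$ and $\gamma>0$, the right-hand side is positive definite, hence of rank $m$. The left-hand side is $\lambda$ times a Gram matrix of $m$ vectors lying in a two-dimensional plane, so its rank is at most $2$. This immediately excludes $m\geq 3$.

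\textbf{Step 3: topology (the main step).} For $m=2$, Step 2 only says that $\widehat{\B}_1(x),\widehat{\B}_2(x)$ are linearly independent at every $x$. The contradiction comes from the diagonal entry alone:
$$
\lambda|\widehat{\B}_1|^2=\gamma H+c_1^2>0 \qquad\text{on }\partial\Omega^\ast .
$$
Thus $\widehat{\B}_1$ is a continuous, nowhere-vanishing tangent vector field on $\partial\Omega^\ast$. Since $\Omega^\ast$ is a bounded convex body, radial projection from an interior point gives a homeomorphism $\partial\Omega^\ast\cong\mathbb S^2$. The hairy ball theorem (Poincar\'e--Hopf, Euler characteristic $2\neq 0$) rules out such a field. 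Hence the case $m\geq 2$ cannot occur, and $\lambda_1^{\mathrm{Max}}(\Omega^\ast)$ is simple.

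The delicate point is the regularity needed for this step. The hairy ball theorem requires continuity of the traces, which is exactly what is assumed, and the pointwise tangentiality and normality in Step 1 must hold everywhere, not merely a.e. Step 3 in fact shows that the hypotheses cannot hold for any $m$, including $m=1$. The statement is therefore true but vacuous, and this is presumably the mechanism exploited in the subsequent proof of Theorem~\ref{thm:open-parabolic-patch}.
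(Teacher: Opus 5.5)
Your proof is correct and follows the paper's argument. The rank bound $\operatorname{rank}G(x)\le 2$ contradicts the positive definiteness of $\lambda G = C+\gamma H I_m$ when $m\ge 3$; for $m=2$, $\widehat{\B}_1$ is a nowhere-vanishing continuous tangent field, which contradicts the hairy ball theorem. Your rank-one form $C=cc^T$ is a harmless refinement of the paper's use of $C\succeq 0$.

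Your closing observation is also right. The diagonal identity alone makes the hypotheses inconsistent for every $m\geq 1$, so Step 2 is superfluous and the statement holds only vacuously. This is exactly why the paper's proof of Theorem~\ref{thm:open-parabolic-patch}, after obtaining $m=1$, reruns the same hairy-ball argument.
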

\begin{proof}
Assume by contradiction that the first Maxwell eigenvalue has multiplicity $m\geq 2$.

Let $(\widehat{\B}_1,\dots,\widehat{\B}_m)$ be an $L^2(\Omega^\ast)$-orthonormal basis of the first normalized magnetic eigenspace for which the assumptions above hold.

Fix $x\in\partial\Omega^\ast$. Define the symmetric matrices
$$
G(x):=\left(\widehat{\B}_i(x)\cdot\widehat{\B}_j(x)\right)_{1\leq i,j\leq m},
\qquad
C(x):=\left(\curl\widehat{\B}_i(x)\cdot\curl\widehat{\B}_j(x)\right)_{1\leq i,j\leq m}.
$$
The matrix $C(x)$ is a Gram matrix, hence is nonnegative.

Since each $\widehat{\B}_i$ has a continuous boundary trace and satisfies
$\widehat{\B}_i\cdot\n=0$ a.e. on $\partial\Omega^\ast$, continuity of the trace gives
$$
\widehat{\B}_i(x)\cdot\n(x)=0
\qquad\forall x\in\partial\Omega^\ast.
$$
Therefore
$\widehat{\B}_1(x),\dots,\widehat{\B}_m(x)\in T_x(\partial\Omega^\ast)$
for every $x\in\partial\Omega^\ast$. Since $T_x(\partial\Omega^\ast)$ is two-dimensional, it follows that
\begin{equation}\label{eq:rank-bound-G-general}
\operatorname{rank}G(x)\leq 2
\qquad\text{for every }x\in\partial\Omega^\ast.
\end{equation}

On the other hand, \eqref{eq:matrix-optimality-magnetic-global-pointwise} yields $\lambda\,G(x)-C(x)=\gamma H(x)\,I_m$. Since $H(x)>0$, the matrix $\gamma H(x)\,I_m$ is positive definite. Hence $\lambda\,G(x)=C(x)+\gamma H(x)\,I_m$ is positive definite as well, because $C(x)$ is nonnegative. Therefore $G(x)$ is positive definite, and in particular
\begin{equation}\label{eq:rank-G-equals-m-general}
\operatorname{rank}G(x)=m
\qquad\text{for every }x\in\partial\Omega^\ast.
\end{equation}
Combining \eqref{eq:rank-bound-G-general} and \eqref{eq:rank-G-equals-m-general}, we obtain $m\leq 2$.

It remains to exclude the case $m=2$. In that case, for every $x\in\partial\Omega^\ast$, the matrix $G(x)$ is positive definite of size $2\times2$. Hence
$\widehat{\B}_1(x)$ and $\widehat{\B}_2(x)$ are linearly independent tangent vectors, so in particular
$$
\widehat{\B}_1(x)\neq 0
\qquad\forall x\in\partial\Omega^\ast.
$$
Thus the boundary trace of $\widehat{\B}_1$ is a continuous tangent vector field on $\partial\Omega^\ast$ with no zero.

Since $\Omega^\ast$ is a bounded convex $C^2$ domain, its boundary is a compact connected $C^2$ surface of genus zero, hence is diffeomorphic to $\mathbb S^2$. By the hairy-ball theorem, every continuous tangent vector field on $\partial\Omega^\ast$ must vanish somewhere. This contradiction excludes the case $m=2$.

Since we already know that $m\leq 2$ and are assuming $m\geq 2$, we conclude that $m=1$. Therefore
$\lambda_1^{\mathrm{Max}}(\Omega^\ast)$ is simple.
\end{proof}

\subsection{First-order optimality and boundary degeneracy}

\begin{proof}[Proof of Theorem~\ref{thm:open-parabolic-patch}]
Assume by contradiction that
$$
\operatorname{int}_{\partial\Omega^\ast}\left\{x\in\partial\Omega^\ast:\kappa(x)=0\right\}=\varnothing.
$$
Set $Z:=\left\{x\in\partial\Omega^\ast:\kappa(x)=0\right\}$, $U:=\partial\Omega^\ast\setminus Z$.
Since $\Omega^\ast$ is convex, its principal curvatures are nonnegative. Hence $U$ is precisely the set where both principal curvatures are positive. It is relatively open, and the contradiction assumption implies that it is dense in $\partial\Omega^\ast$.

Set $\lambda:=\lambda_1^{\mathrm{Max}}(\Omega^\ast)$, $\gamma:=\lambda/\Per(\Omega^\ast)$.
By Maxwell regularity on $\mathscr C^3$ domains, the first magnetic eigenfields and their curls have continuous boundary traces. Let $\left(\widehat{\B}_1,\ldots,\widehat{\B}_m\right)$
be a real-valued $L^2(\Omega^\ast)$-orthonormal basis of the first normalized magnetic eigenspace. For $1\leq i,j\leq m$, define
$$
F_{ij}:=\lambda\,\widehat{\B}_i\cdot\widehat{\B}_j-\curl\widehat{\B}_i\cdot\curl\widehat{\B}_j-\gamma H\,\delta_{ij} \qquad\text{on }\partial\Omega^\ast.
$$
Each function $F_{ij}$ is continuous.

Fix $x\in U$. Since both principal curvatures are positive at $x$, there exists a relatively open strictly convex boundary patch $\Gamma_x$ containing $x$. By Remark~\ref{rem:matrix-optimality-magnetic},
$$
F_{ij}=0
\qquad\text{a.e. on }\Gamma_x.
$$
By continuity, this identity holds pointwise on $\Gamma_x$. Since $x\in U$ is arbitrary,
$$
F_{ij}=0 \qquad\text{on }U.
$$
The density of $U$ and the continuity of $F_{ij}$ then give
\begin{equation}\label{eq:global-matrix-identity-from-density}
\lambda\,\widehat{\B}_i\cdot\widehat{\B}_j-\curl\widehat{\B}_i\cdot\curl\widehat{\B}_j=\gamma H\,\delta_{ij} \qquad\text{on }\partial\Omega^\ast.
\end{equation}

Proposition~\ref{prop:simplicity-convex-minimiser} applies and shows that $m=1$. Let $\widehat{\B}$ be a real-valued normalized first magnetic eigenfield. Identity \eqref{eq:global-matrix-identity-from-density} becomes
\begin{equation}\label{eq:global-scalar-identity-from-density}
\lambda|\widehat{\B}|^2-|\curl\widehat{\B}|^2=\gamma H\qquad\text{on }\partial\Omega^\ast.
\end{equation}
Since $H>0$, the field $\widehat{\B}$ does not vanish on $\partial\Omega^\ast$. Indeed, if $\widehat{\B}(x_0)=0$ at some $x_0\in\partial\Omega^\ast$, then
$$
-|\curl\widehat{\B}(x_0)|^2=\gamma H(x_0)>0,
$$
which is impossible.

The magnetic boundary condition gives $\widehat{\B}\cdot\n=0$ on $\partial\Omega^\ast$.
Thus the boundary trace of $\widehat{\B}$ is a continuous tangent vector field without zeros. Since $\Omega^\ast$ is a bounded convex domain with $\mathscr C^3$ boundary, $\partial\Omega^\ast$ is diffeomorphic to $\mathbb S^2$. This contradicts the hairy-ball theorem. Therefore
$$
\operatorname{int}_{\partial\Omega^\ast}
\left\{x\in\partial\Omega^\ast:\kappa(x)=0\right\}
\neq\varnothing.
$$
\end{proof}

\begin{proof}[Proof of Corollary~\ref{thm:main-nonexistence}]
Assume that $\Omega^\ast$ is a minimiser of \eqref{Eq:PvMaxwell} whose boundary is of class $\mathscr C^3$ and has two strictly positive principal curvatures at every point. Then $H>0$ and $\kappa>0$ on $\partial\Omega^\ast$.
Since a global minimiser is in particular a local minimiser, Theorem~\ref{thm:open-parabolic-patch} implies
$$
\operatorname{int}_{\partial\Omega^\ast}
\left\{x\in\partial\Omega^\ast:\kappa(x)=0\right\}
\neq\varnothing,
$$
which contradicts $\kappa>0$ on $\partial\Omega^\ast$.
\end{proof}

\begin{corollary}[Analytic case under positive mean curvature]\label{cor:no-analytic-positive-mean}
There is no bounded convex local minimiser $\Omega^\ast$ of $J_{\mathrm{Per}}$ such that $\partial\Omega^\ast$ is real analytic and $H>0$ on $\partial\Omega^\ast$.
\end{corollary}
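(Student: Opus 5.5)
The plan is to argue by contradiction and reduce the statement to Theorem~\ref{thm:open-parabolic-patch}. Let $\Omega^\ast$ be a bounded convex local minimiser of $J_{\mathrm{Per}}$ in $\mathcal C$ with real analytic boundary and $H>0$ on $\partial\Omega^\ast$. A real analytic boundary is in particular of class $\mathscr C^3$, so all the hypotheses of Theorem~\ref{thm:open-parabolic-patch} hold. Hence the zero set
$$
Z:=\left\{x\in\partial\Omega^\ast:\kappa(x)=0\right\}
$$
has nonempty relative interior in $\partial\Omega^\ast$.

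Next we use analyticity. Since $\partial\Omega^\ast$ is a compact connected real analytic surface, the Gauss curvature $\kappa$ is a real analytic function on it: in local analytic charts it is a rational expression in derivatives of the parametrisation, with a nonvanishing denominator. The surface is connected because it is diffeomorphic to $\mathbb S^2$, as recalled in the proof of Proposition~\ref{prop:simplicity-convex-minimiser}. By the identity theorem for real analytic functions on a connected analytic manifold, $\kappa$ vanishes on an open set, so $\kappa\equiv0$ on all of $\partial\Omega^\ast$.

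It remains to contradict $\kappa\equiv 0$. The quickest route is Gauss--Bonnet: $\int_{\partial\Omega^\ast}\kappa\,d\mathcal H^2=2\pi\chi(\mathbb S^2)=4\pi\neq0$. An elementary alternative is to take a point of $\partial\Omega^\ast$ farthest from the origin; there the surface lies inside a sphere touching it, so both principal curvatures are at least the inverse radius and $\kappa>0$. Either argument contradicts $\kappa\equiv0$, which proves the corollary.

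The only point needing care is the analyticity of $\kappa$ and the connectedness needed for unique continuation; both are standard. Note also that $H>0$ is used only to invoke Theorem~\ref{thm:open-parabolic-patch}, not in the final contradiction.
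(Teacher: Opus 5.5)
Your proposal is correct and follows the paper's proof: you apply Theorem~\ref{thm:open-parabolic-patch} to get a nonempty open set where $\kappa=0$, use the identity principle on the connected analytic surface to conclude $\kappa\equiv0$, and then contradict Gauss--Bonnet. Your elementary alternative, which compares the surface with a sphere touching it at a farthest point from the origin, is also valid.
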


\begin{proof}
We argue by contradiction, so assume that the claim of the corollary is false. 
By Theorem~\ref{thm:open-parabolic-patch}, the real-analytic function $\kappa$ vanishes on a nonempty relatively open subset of the connected surface $\partial\Omega^\ast$. The identity principle therefore gives
$\kappa\equiv0$ on $\partial\Omega^\ast$. This contradicts the Gauss--Bonnet formula
$$
\int_{\partial\Omega^\ast}\kappa\,d\mathcal H^2=4\pi,
$$
since the boundary of a bounded convex body is diffeomorphic to $\mathbb S^2$.
\end{proof}

 \section*{Acknowledgments} 
This work is partially supported by the ANR project STOIQUES, funded by the French Agence Nationale de la Recherche (ANR).\\

\noindent\textbf{Data availability.} Data sharing is not applicable to this article, as no datasets were generated or analysed.\\

\noindent\textbf{Conflict of interest.} The authors declare that they have no conflict of interest.


\bibliographystyle{alpha}
\bibliography{biblio}

\end{document}